\newtheoremstyle{theorem}
     {11pt}
     {11pt}
     {}
     {}
     {\bfseries}
     {}
     {.5em}
     {\noindent\thmnumber{#2}. \thmname{#1}\thmnote{#3}}
\theoremstyle{theorem}
\newtheorem{thm}{Theorem}[section]
\newtheorem{lemma}[thm]{Lemma}
\newtheorem{propo}[thm]{Proposition}
\newtheorem{remark}[thm]{Remark}
\newtheorem{coro}[thm]{Corollary}
\newtheorem{ques}[thm]{Question}
\newtheorem{ex}[thm]{Example}
\newtheorem{defi}[thm]{Definition}
\newenvironment{exex}
{\noindent\!\!}{\qed\newline}
\newtheoremstyle{acknowledgement}
     {11pt}
     {11pt}
     {}
     {}
     {\bfseries}
     {}
     {.5em}
     {\noindent \thmname{#1}}
\theoremstyle{acknowledgement}
\newtheorem*{acknowledgement}{Acknowledgement}
\newcommand{\R}[1]{\varrho(#1)}
\newcommand{\cl}[2][X]{\mathrm{cl}_{#1}\!\left(#2\right)}
\newcommand{\cs}{{}\sp\omega2}
\newcommand{\Q}{\mathbb{Q}}
\newcommand{\baire}[1]{{}\sp\omega{#1}}
\newcommand{\cont}{\mathfrak{c}}
\newcommand{\Ex}{\mathrm{Ex}}
\newcommand{\s}{\mathcal{S}}
\newcommand{\B}{\mathcal{B}}
\newcommand{\C}{\mathcal{C}}
\newcommand{\st}[1]{\mathrm{st}(#1)}
\newcommand{\bd}[2][X]{\mathrm{bd}_{#1}\!\left(#2\right)}
\newcommand{\U}{\mathcal{U}}
\title{Spaces of Remote Points}
\author[Hern\'andez-Guti\'errez]{Rodrigo Hern\'andez-Guti\'errez}
	\email[Hern\'andez-Guti\'errez]{rod@matmor.unam.mx}
\author[Hru\v s\'ak]{Michael Hru\v s\'ak}
	\email[Hru\v s\'ak]{michael@matmor.unam.mx}
\author[Tamariz-Mascar\'ua]{Angel Tamariz-Mascar\'ua}
	\email[Tamariz-Mascar\'ua]{atamariz@unam.mx}
\address[Hern\'andez-Guti\'errez and Hru\v s\'ak]{Centro de Ciencias Matem\'aticas, UNAM, A.P. 61-3, Xangari, Morelia, Michoac\'an, 58089, M\'exico}
\address[Tamariz-Mascar\'ua]{Departamento de Matem\'aticas, Facultad de Ciencias, Universidad Nacional Aut\'onoma de M\'exico,
Ciudad Universitaria, M\'{e}xico D.F., 04510, M\'exico}
\thanks{This paper is part of the first author's doctoral dissertation. Research was supported by CONACyT scholarship for Doctoral Students.}
\date{\today}
\subjclass[2010]{54D35, 54D40, 54G05, 54E50, 54E52, 54E18}
\keywords{\v Cech-Stone compatification, Remote point, Absolute, Metrizable space}
\begin{document}

\begin{abstract}
Given a Tychonoff space $X$, let $\R{X}$ be the set of remote points of $X$. We view $\R{X}$ as a topological space. In this paper we assume that $X$ is metrizable and ask for conditions on $Y$ so that $\R{X}$ is homeomorphic to $\R{Y}$. This question has been studied before by R. G. Woods and C. Gates. We give some results of the following type: if $X$ has topological property $\mathbf{P}$ and $\R{X}$ is homeomorphic to $\R{Y}$, then $Y$ also has $\mathbf{P}$. We also characterize the remote points of the rationals and irrationals up to some restrictions. Further, we show that $\R{X}$ and $\R{Y}$ have open dense homeomorphic subspaces if $X$ and $Y$ are both nowhere locally compact, completely metrizable and share the same cellular type, a cardinal invariant we define.
\end{abstract}

\maketitle

\section{Introduction}

Given a Tychonoff space $X$, let $\beta X$ denote the \v Cech-Stone compactification of $X$ and $X\sp\ast=\beta X-X$. A point $p\in X\sp\ast$ is called a \emph{remote point} provided $p\notin\cl[\beta X]{A}$ for each nowhere dense subset $A$ of $X$. Following van Douwen \cite{vd51}, we will denote the set of remote points of $X$ by $\R{X}$. In some informal sense, points of $X\sp\ast$ are ``infinite points'' of $X$ and points in $\R{X}$ are ``more infinite than [all] others'' (\cite[1.3]{vd51}). 

The major problem concerning remote points is perhaps their existence. However, in this paper we would like to address another problem that has been forgotten for some years. Our main problem is, in general terms, the following

\begin{quote}
$(\ast)$ Given a Tychonoff space $X$, find all $Y$ such that $\R{X}$ is homeomorphic to $\R{Y}$.
\end{quote}

Notice that problems of type $(\ast)$ can be formulated every time we can construct a space in terms of some other in a topological way; examples of this are compactifications, rings of continuous functions, hyperspaces, absolutes, etcetera. Problem $(\ast)$ has its origin in a paper of R. G. Woods \cite{woodsremote} where the following is stated. (Recall crowded means ``with no isolated points''.)

\begin{thm}\cite{woodsremote}\label{thmwoods}
Let $X$ be a non-compact, locally compact and crowded metrizable space of weight $\kappa$. Then $\R{X}$ is homeomorphic to $\R{\kappa\times\cs}$.
\end{thm}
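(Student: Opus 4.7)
The plan is to reduce $\R{X}$ to the standard model $\R{\kappa\times\cs}$ through a two-step strategy: an invariance principle for dense open subspaces, and a canonical normal-form construction.

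First I would establish the invariance principle: if $Z$ is a non-compact Tychonoff space and $U\subseteq Z$ is a dense open subset, then $\R{Z}\cong\R{U}$ via the continuous extension $\tilde\iota\colon\beta U\to\beta Z$ of the inclusion $U\hookrightarrow Z$. Since $Z\setminus U$ is closed nowhere dense, every remote point of $Z$ lies in the open subset $\Ex_Z(U):=\beta Z\setminus\cl[\beta Z]{Z\setminus U}$; one then checks that $\tilde\iota$ restricts to a homeomorphism from $\tilde\iota^{-1}(\Ex_Z(U))$ onto $\Ex_Z(U)$, and that the correspondence between nowhere dense subsets of $U$ and of $Z$ (every nowhere dense $B\subseteq U$ is nowhere dense in $Z$, and every nowhere dense $A\subseteq Z$ meets $U$ in a nowhere dense subset of $U$) matches the defining closure conditions for remote points on both sides.

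Second, I would produce a dense open subspace $V\subseteq X$ homeomorphic to $\kappa\times\cs$. Using local compactness, crowdedness and metrizability, every non-empty open subset of $X$ contains a compact, crowded, zero-dimensional subspace---a topological copy of $\cs$---which can be chosen clopen inside a suitable open neighbourhood. A Zorn's Lemma argument yields a maximal pairwise-disjoint family $\{K_\xi:\xi<\lambda\}$ of such Cantor copies with the corresponding open neighbourhoods chosen pairwise disjoint as well. Maximality forces $V:=\bigsqcup_\xi K_\xi$ to be dense open in $X$, and by construction $V\cong\lambda\times\cs$. A weight calculation using $w(X)=\pi w(X)=c(X)=\kappa$ for $X$ in the hypothesis class yields $\lambda=\kappa$. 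Combining with the invariance principle gives $\R{X}\cong\R{V}\cong\R{\kappa\times\cs}$.

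The main obstacle is the Cantor-clopen decomposition of Step 2 when $X$ is not zero-dimensional, for instance $X=\mathbb{R}$, where no compact subset of $X$ is open and the required clopen Cantor copies $K_\xi$ cannot exist as subsets of $X$. To handle this case a different approach is needed---for example, invoking Boolean-algebraic invariants (showing that the algebra $\mathrm{RO}(X)/\mathrm{nwd}$ is the $\kappa$-Cohen algebra for every $X$ in the hypothesis class, and recovering $\R{X}$ from this algebra together with the non-compactness structure), or passing to a suitable zero-dimensional perfect-irreducible preimage of $X$ before applying Step 2. A secondary subtlety is verifying the homeomorphism in the invariance principle, specifically that $\tilde\iota$ carries $\R{U}$ bijectively onto $\R{X}$ without ``escaping'' into the excess fibres $\tilde\iota^{-1}(Z\setminus U)$; this should follow from a careful analysis of closures of nowhere dense sets in $\beta U$ and $\beta Z$.
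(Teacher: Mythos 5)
This theorem is not proved in the paper: it is quoted from Woods, with the remark that the original CH-dependent proof becomes a ZFC proof once Proposition \ref{propogates} is available. So your attempt can only be measured against the toolkit the paper builds for exactly this kind of reduction.

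Your Step 1 is the paper's Lemma \ref{opendense} ($\R{Z}=\R{U}$ for $U$ dense open in normal $Z$); your direct argument via $\Ex_Z(U)$ can be made to work for normal $Z$ (the point is that each $p\in\Ex_Z(U)$ has a $\beta Z$-neighbourhood of the form $\cl[\beta Z]{A}$ with $A$ a closed subset of $Z$ contained in $U$, and such an $A$ is $C^\ast$-embedded in both $Z$ and $U$), but the paper's route through the absolute avoids these $C^\ast$-embedding verifications entirely. The genuine gap is Step 2, and it is exactly the one you flag: for $X=\mathbb{R}$ (or any $X$ of positive dimension) no dense open subspace is homeomorphic to $\kappa\times\cs$, since the pieces of such a subspace would have to be compact open in $X$. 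As written, your argument therefore proves the theorem only for strongly $0$-dimensional $X$, and naming two possible repairs without executing either does not close the gap. The repair intended by the paper is your second option: by Proposition \ref{dimension} (Morita's theorem plus a Kuratowski--Zorn minimization) there is a perfect irreducible continuous surjection $f:Y\to X$ with $Y\subset\baire{\kappa}$ strongly $0$-dimensional, and by Proposition \ref{propogates} (or Corollary \ref{coroabsolute}) $\R{Y}\cong\R{X}$. To finish you must still check that $Y$ inherits the hypotheses: $Y$ is non-compact and locally compact because $f$ is perfect, crowded because $f$ is irreducible (an isolated point of $Y$ would produce one in $X$), and of weight $\kappa$ because irreducible maps preserve cellularity and $w=c$ for metrizable spaces. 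Only then does your maximal-disjoint-family construction apply to $Y$, yielding a dense open $V\subseteq Y$ with $V\cong\lambda\times\cs$; your cardinality argument gives $\lambda=\kappa$ (with the separate observation that $\lambda$ cannot be finite since a dense compact open subspace would force $Y$ compact). With these insertions the outline becomes a correct proof; without them it is not one.
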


As noted by C. Gates in \cite{gates}, the proof given in \cite{woodsremote} depends on CH but can be easily modified (in particular, using Proposition \ref{propogates} below) to give a proof in ZFC. We also have the following results by Gates and van Douwen.

\begin{thm}\cite[Corollary 5.8]{gates}\label{thmgates}
Let $X$ be a non-compact, separable and crowded metrizable space whose set of non-locally compact points is compact. Then $\R{X}$ is homeomorphic to $\omega\times\R{\omega\times\cs}$.
\end{thm}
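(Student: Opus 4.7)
The plan is to exploit the compactness of $K$, the set of non-locally compact points of $X$, to partition $\R{X}$ into countably many clopen pieces, each of which is a copy of $\R{\omega\times\cs}$ by Woods's theorem (Theorem~\ref{thmwoods}). First I would verify that $K$ is nowhere dense in $X$: any $x\in\mathrm{int}_X(K)$ would admit $K$ itself as a compact neighborhood, contradicting $x\in K$. Hence $L:=X\setminus K$ is an open dense subspace which is non-compact (otherwise $X=K\cup L$ would be compact), crowded, separable, locally compact and metrizable.

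Using metrizability of $X$ and compactness of $K$, I would choose a decreasing neighborhood base $(V_n)_{n\in\omega}$ of $K$ in $X$ with $\cl[X]{V_{n+1}}\subseteq V_n$ and $\bigcap_n V_n=K$, and set $F_n:=X\setminus V_n$. Each $F_n$ is a closed subset of $X$ contained in $L$, hence non-compact (for $n$ large enough), locally compact, crowded (after a slight shrinkage of the $V_n$), separable and metrizable; these sets exhaust $L$. The central technical step is to prove
\[
\R{X}\subseteq\bigcup_{n\in\omega}\cl[\beta X]{F_n},
\]
that is, no remote point of $X$ can ``approach $K$''. Assuming for contradiction some $p\in\R{X}$ lies in $\cl[\beta X]{V_n}$ for every $n$, I would construct a closed nowhere dense set $A\subseteq X$ with $p\in\cl[\beta X]{A}$ by picking within each annular shell $V_n\setminus\cl[X]{V_{n+1}}$ a closed nowhere dense subset $A_n$ onto which the ultrafilter representing $p$ traces, and letting $A:=\bigcup_n A_n$; the compactness of $K$ together with the rapid shrinkage $\cl[X]{V_{n+1}}\subseteq V_n$ keeps $A$ nowhere dense in $X$, yielding $p\notin\R{X}$, a contradiction.

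Once this containment is established, I would carve $\R{X}$ into clopen pieces by setting $R_n:=\R{X}\cap(\cl[\beta X]{F_n}\setminus\cl[\beta X]{F_{n-1}})$ with $F_{-1}:=\emptyset$ (possibly refined to obtain genuine clopenness using a regular-open variant of the $V_n$'s). These pieces partition $\R{X}$ by the previous step. A standard identification of remote points inside the $\beta X$-closure of a closed subspace, made precise in Proposition~\ref{propogates}, matches each $R_n$ with the remote-point set of a non-compact, locally compact, crowded, separable, metrizable space, so Theorem~\ref{thmwoods} gives $R_n\cong\R{\omega\times\cs}$. Assembling over $n\in\omega$ produces the desired homeomorphism $\R{X}\cong\omega\times\R{\omega\times\cs}$. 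The hard part is the nowhere-dense construction above: producing a single nowhere dense subset of $X$ whose $\beta X$-closure traps any prescribed near-$K$ point, which genuinely uses the compactness of $K$ and the metric structure and is what distinguishes this theorem from Woods's original result.
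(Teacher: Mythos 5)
The paper offers no proof of this statement---it is quoted verbatim from Gates \cite[Corollary 5.8]{gates}---so I can only judge your argument on its own terms. Your architecture (peel $X$ into shells $F_n=X\setminus V_n$ around the compact set $K$ of non-locally-compact points, show every remote point lives over some shell, and apply Theorem \ref{thmwoods} shell by shell) is the right one. But your proof of the central covering step $\R{X}\subseteq\bigcup_n\cl[\beta X]{F_n}$ does not work as described. You propose to trap a point $p\in\bigcap_n\cl[\beta X]{V_n}$ in $\cl[\beta X]{A}$ for a nowhere dense $A=\bigcup_n A_n$ by picking in each shell a nowhere dense set ``onto which the ultrafilter representing $p$ traces.'' The trace of such a $p$ contains each $\cl[X]{V_n}$ but need not contain any shell $\cl[X]{V_n}\setminus V_{n+1}$ (it can concentrate on $\cl[X]{V_{n+1}}$ at every stage), and if $p$ is remote its trace contains no nowhere dense set at all whose closure captures $p$---that is exactly what remoteness forbids---so there is nothing to pick and the construction is circular. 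Fortunately the step is true for a simpler reason, and for all of $X\sp\ast$, not just $\R{X}$: taking $V_n=\{x:d(x,K)<1/n\}$, compactness of $K$ makes $\{V_n\}$ a neighborhood base at $K$, so for $p\in\beta X\setminus K$ one separates $p$ from $K$ by disjoint open sets of $\beta X$, pushes some $V_n$ into the one around $K$, and gets $\bigcap_n\cl[\beta X]{V_n}=K$; since $\beta X=\cl[\beta X]{V_n}\cup\cl[\beta X]{F_n}$, every point of $X\sp\ast$ lies in some $\cl[\beta X]{F_n}$. Only compactness of $K$ is used; nowhere density plays no role.

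In the assembly there are further gaps. The identification of $R_n$ with the remote points of the shell $\cl{F_n\setminus F_{n-1}}$ rests on Proposition \ref{subspaces} (regular closed subsets), not Proposition \ref{propogates}, and the shells must be replaced by their regular-closed cores $\cl{\mathrm{int}(\cdot)}$ to guarantee crowdedness before Theorem \ref{thmwoods} applies (the discarded part is nowhere dense, so carries no remote points). More seriously, a shell can perfectly well be compact, in which case $R_n=\emptyset$; to land on $\omega\times\R{\omega\times\cs}$ rather than $k\times\R{\omega\times\cs}$ for some finite $k$ you must show that infinitely many shells are non-compact. This does follow from $K\neq\emptyset$: if all but finitely many shells were compact then $\cl{V_N}$ would be compact for some $N$ (any sequence in it either stays in a finite union of compact shells or accumulates on the compact set $K$), making $X$ locally compact at every point of $K$, a contradiction. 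You need to say this; and the degenerate case $K=\emptyset$ escapes your scheme entirely, since there the decomposition collapses to a single piece and the conclusion requires separately that $\R{\omega\times\cs}$ is homeomorphic to $\omega\times\R{\omega\times\cs}$.
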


\begin{thm}\cite[Theorem 16.2]{vd51}\label{thmvandouwen}
$\R{\Q}$ and $\R{\baire{\omega}}$ are not homeomorphic because $\R{\Q}$ is a Baire space and $\R{\baire\omega}$ is meager.
\end{thm}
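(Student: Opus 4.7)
The theorem separates into two independent claims, each a homeomorphism invariant: $\R{\Q}$ is a Baire space, and $\R{\baire\omega}$ is meager.

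For the meagerness of $\R{\baire\omega}$, attach to each $p \in \R{\baire\omega}$ the \emph{support} $S(p) = \{n \in \omega : \exists k,\ [k]_n \in p\}$, where $[k]_n = \{f \in \baire\omega : f(n) = k\}$. The crucial observation is that $S(p)$ is finite: if it were infinite with selected values $k_n$ at $n \in S(p)$, then the set $\{f \in \baire\omega : f(n) = k_n \text{ for all } n \in S(p)\}$ is closed and nowhere dense in $\baire\omega$ (no basic clopen $[s]$, with $s$ of finite domain, is contained in it), yet every clopen $U \in p$ meets this set (since $U$ contains some $[s]$ compatible with the $k_n$ on $\mathrm{dom}(s)$), contradicting remoteness. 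This yields a countable decomposition $\R{\baire\omega} = \bigsqcup_s R_s$, indexed by finite partial functions $s$ from a finite subset of $\omega$ to $\omega$, where $R_s$ consists of the remote points of support exactly $\mathrm{dom}(s)$ and trace $s$. Each $R_s$ is relatively closed in $\R{\baire\omega}$, its complement being a countable union of relatively clopen pieces; and each $R_s$ has empty interior, because any basic open $\cl[\beta\baire\omega]{C} \cap \R{\baire\omega}$ meeting $R_s$ contains a remote point of the non-empty clopen $C \cap [s] \cap [k]_n$ (for some $n \notin \mathrm{dom}(s)$ and suitable $k$), and such a point has support strictly extending $\mathrm{dom}(s)$. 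Therefore $\R{\baire\omega}$ is a countable union of nowhere-dense subsets of itself, i.e., meager.

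For the Baireness of $\R{\Q}$, play the Banach--Mazur game on $\R{\Q}$ and build a winning strategy for Player II. A basic clopen of $\Q$ has the form $\Q \cap (a,b)$ with $a, b$ irrational, so Player II shrinks to nested such intervals $[a_{n+1}, b_{n+1}] \subset (a_n, b_n)$ with $b_n - a_n < 1/n$, forcing $a_n, b_n \to x$ for some irrational $x$; the intersection $\bigcap_n \cl[\beta\Q]{\Q \cap (a_n, b_n)}$ in $\beta\Q$ is then non-empty and disjoint from $\Q$. The additional requirement is that the limiting ultrafilter be remote; for this Player II diagonally schedules, via the countable $\pi$-base of $\Q$, an exhaustion of closed nowhere-dense subsets of $\Q$, ensuring each is eventually disjoint from her shrinking clopen, so that the limiting ultrafilter contains no such set.

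The primary obstacle lies in this last step: the family of closed nowhere-dense subsets of $\Q$ has cofinality $\cont$---because a countable cofinal subfamily would, by taking $\mathbb{R}$-closures, yield a meager cover of the irrationals in $\mathbb{R}$, contradicting Baire category---so $\R{\Q}$ is not $G_\delta$ in $\beta\Q$ and \v Cech-completeness is unavailable. The strategy must exploit that shrinking toward a target irrational $x$ \emph{automatically} discards all closed nowhere-dense subsets of $\Q$ not accumulating at $x$, reducing the diagonalization to a countable bookkeeping over those that do, which is feasible since $\Q$ has countable $\pi$-weight.
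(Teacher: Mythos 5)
The paper does not actually prove this statement --- it is quoted from van Douwen --- so your attempt must stand on its own, and it does not: both halves have genuine gaps, and the first half rests on a false claim. The assertion that every remote point of $\baire\omega$ has finite support fails because membership of each $[k_n]_n$ in the ultrafilter $p$ only guarantees that a clopen $U\in p$ meets every \emph{finite} subintersection of $A=\bigcap_{n\in S(p)}[k_n]_n$; without compactness this does not pass to the infinite intersection, so you cannot conclude $p\in\cl[\beta\baire\omega]{A}$. Concretely, let $k_{2i}=0$ for all $i$, $A=\bigcap_i[0]_{2i}$, and $V=\bigcup_m\bigl([m]_1\setminus[0]_{2m}\bigr)$. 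Then $V$ is clopen, $V\cap A=\emptyset$, and $\{V\}\cup\{[0]_{2i}:i<\omega\}$ has the finite intersection property, so $G=\Ex(V)\cap\bigcap_i\Ex([0]_{2i})$ is a non-empty $G_\delta$ of $\beta(\baire\omega)$ contained in $(\baire\omega)^\ast$; by Proposition \ref{structuremetrizable}$(b)$ it contains $2\sp\cont$ remote points, each containing every $[0]_{2i}$ and hence of infinite support. Your countable decomposition therefore does not cover $\R{\baire\omega}$. The standard argument is different: the remainder of $\baire\omega$ in a metrizable compactification is a countable crowded space, $(\baire\omega)^\ast$ maps onto it irreducibly, preimages of points under a closed irreducible map onto a crowded space are nowhere dense, so $(\baire\omega)^\ast$ is meager and its dense subset $\R{\baire\omega}$ is meager --- exactly the mechanism of Lemmas \ref{gamecoabsolute}, \ref{gameGdense} and Proposition \ref{thmgames}.

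In the Baire half, the interval-shrinking is essentially sound (though you must actively dodge an enumeration of $\Q$ to guarantee the limit $x$ is irrational, which you only assert), and the intersection of Player II's plays is $f\sp\leftarrow(x)\cap\R{\Q}$ where $f:\beta\Q\to[0,1]$ extends the inclusion. But the proposed ``countable bookkeeping over the closed nowhere dense sets accumulating at $x$'' cannot work: that family still has cardinality $\cont$ and no countable cofinal subfamily (consider $\{q_n:n\in S\}$ for infinite $S\subset\omega$ and a fixed sequence of rationals $q_n\to x$), so no diagonalization against a countable list handles them all. What is actually needed is Proposition \ref{structuremetrizable}$(b)$: $f\sp\leftarrow(x)=\bigcap_n\Ex\bigl(\Q\cap(x-\frac1n,x+\frac1n)\bigr)$ is a non-empty $G_\delta$ missing $\Q$, hence contains $2\sp\cont$ remote points, and Player II wins with no bookkeeping at all. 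That existence theorem (Chae--Smith, van Douwen) is the real content of this half and is not a routine diagonalization; if you grant it, the cleaner proof of both halves is the one the paper's Section \ref{games} is built on, namely that $\R{X}$ is $G_\delta$-dense in $X\sp\ast$, which maps irreducibly onto $K-X$ for any metrizable compactification $K$, with $K-\Q$ completely metrizable (so Choquet, hence Baire, transfers back) and $K-\baire\omega$ countable crowded (so meager transfers back).
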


However, Theorems \ref{thmwoods}, \ref{thmgates} and \ref{thmvandouwen} are the only known results concerning $(\ast)$. In this paper, we study the following refinement of $(\ast)$.

\begin{ques}\label{main}
Let $X$ be a metrizable non-compact space. Find some simple or known topological property $\mathbf{P}$ such that if $Y$ is metrizable then $Y$ has $\mathbf{P}$ if and only if $\R{X}$ is homeomorphic to $\R{Y}$.
\end{ques}

The reason we restrict $X$ to be metrizable is because we already know that we have a rich collection of remote points (Proposition \ref{structuremetrizable}). This will allow us to transfer some properties of $X$ to $\R{X}$. However, by Proposition \ref{ponomarevpropo}, we may also consider paracompact $M$-spaces, see Section \ref{paracompact}.

In Section \ref{known} we will give a summary of the known results that will help us attack Question \ref{main}. Section \ref{paracompact} talks about some aspects relating paracompact $M$-spaces to metrizable spaces in this context. Sections \ref{dimlocomp}, \ref{complmetr}, \ref{games} are the main body of the paper where our results on some classes of spaces are proved. 

Some of our results are of the following type: if $X$ has topological property $\mathbf{P}$ and $\R{X}$ is homeomorphic to $\R{Y}$, then $Y$ also has $\mathbf{P}$. In particular we study properties such as dimension, local compactness, topological completeness and $\sigma$-compactness. For the other implication, our main results are perhaps Theorem \ref{remotebaire} and Corollary \ref{remoteQ} that characterize remote points of the irrationals and rationals up to some restrictions. However the main question that remains unanswered in this paper is the following. 

\begin{ques}\label{quesgeneral}
Find all metrizable $X$ such that $\R{X}$ is homeomorphic to either $\R{\Q}$ or $\R{\baire{\omega}}$.
\end{ques}

See Questions \ref{quescomplete} and \ref{quesrationals} for reformulations of Question \ref{quesgeneral}. In Section 5 we also give a classification of nowhere locally compact, completely metrizable spaces by a sort of cardinal invariant we call cellular type. It turns out that cellular type almost characterizes remote points for this class of spaces, see Corollary \ref{corocelltype} and Example \ref{excelltype}.

\section{Existing Tools}\label{known}

Undefined notions can be found in \cite{eng}. Everything else will be defined as soon as it is necessary.

A useful basis for the topology of $\beta X$ is the one formed by the sets of the form $\Ex(U)=\beta X-\cl[\beta X]{X-U}$ where $U\subset X$ is open \cite[Section 3]{vd51}. It is easy to prove that $\Ex(U)\cap X=U$ and $\cl[\beta X]{U}=\cl[\beta X]{\Ex(U)}$. Recall the \v Cech-Stone compactification behaves functorially: every continuous function between Tychonoff spaces $f:X\to Y$ extends to a continuous function $\beta f:\beta X\to \beta Y$.

Let us begin by recalling where we can find remote points in metrizable spaces:

\begin{propo}\label{structuremetrizable}
Let $X$ be a metrizable, non-compact space. Then,
\begin{itemize}
\item[$(a)$] $X$ is nearly realcompact, that is, $\beta X-\upsilon X$ is dense in $X\sp\ast$,
\item[$(b)$] if $G$ is a subset of $\beta X$ of type $G_\delta$ and $\emptyset\neq G\subset X\sp\ast$, then $|G\cap\R{X}|=2\sp\cont$,
\item[$(c)$] $\R{X}$ is dense in $X\sp\ast$.
\end{itemize}
\end{propo}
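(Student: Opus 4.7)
The three parts cascade: (a) produces nontrivial $G_\delta$'s inside $X^\ast$, (b) is the combinatorial core, and (c) follows from the two. For (a), density of $\beta X-\upsilon X$ in $X^\ast$ reduces to meeting every basic $\Ex(U)$ with $\Ex(U)\cap X^\ast\neq\emptyset$; such $U$ has $\cl[X]{U}$ noncompact, so metrizability yields a closed discrete $\{x_n:n\in\omega\}\subset U$ together with pairwise disjoint open balls $W_n\ni x_n$ with $\cl[X]{W_n}\subset U$. A continuous $f:X\to[0,\infty)$ supported in $\bigcup_n W_n$ with $f(x_n)=n$ has Stone extension $\tilde f:\beta X\to[0,\infty]$ satisfying $\tilde f^{-1}((0,\infty])\subset\Ex(U)$ (any $\beta X$-open subset disjoint from $X-U$ is automatically disjoint from $\cl[\beta X]{X-U}$) and $\tilde f^{-1}(\infty)\subset X^\ast$ (as $f$ is finite on $X$); unboundedness of $f$ forces $\tilde f^{-1}(\infty)$ to be nonempty, and every point there lies in $\beta X-\upsilon X$.

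For (b), the substantive work, I would adapt van Douwen's remote-point construction from \cite{vd51} so that the resulting points stay inside $G$. Fix a $\sigma$-disjoint base $\bigcup_n\B_n$ of $X$ (Bing--Nagata--Smirnov), write $G=\bigcap_n V_n$ with $V_n$ open in $\beta X$, decreasing, and $\cl[\beta X]{V_{n+1}}\subset V_n$, and recursively build a tree $\{U_s:s\in\omega^{<\omega}\}$ of nonempty open subsets of $X$ such that $\cl[\beta X]{U_s}\subset V_{|s|}$ and, for every $s$, the children $\{U_{s^\frown i}:i<\omega\}$ form a discrete family in $X$ refining $\B_{|s|}$, with $\cl[X]{U_{s^\frown i}}\subset U_s$ and diameters along branches tending to $0$. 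The key combinatorial ingredient is that for every nowhere dense $A\subset X$, the empty interior of $\cl[X]{A}$ together with the $\sigma$-disjointness of $\B_{|s|}$ forces, at each node $s$, cofinally many children $U_{s^\frown i}$ to satisfy $U_{s^\frown i}\cap\cl[X]{A}=\emptyset$. Interleaving a suitable bookkeeping through countably many nowhere dense ``test'' sets and harvesting the $2^\cont$ free ultrafilters on $\omega$ to read off ultrafilter-limits in the tree then produces $2^\cont$ distinct remote points $p_\U\in G$.

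For (c), given $p\in X^\ast$ and an open $V\ni p$ in $\beta X$, (a) supplies $q\in V\cap(\beta X-\upsilon X)$ together with a continuous $f:X\to\mathbb{R}$ such that $\tilde f(q)=\infty$; since $\{\infty\}$ is a $G_\delta$ in $[-\infty,\infty]$, the set $\tilde f^{-1}(\{\infty\})\cap V$ is a nonempty $G_\delta$ of $\beta X$ contained in $V\cap X^\ast$, and by (b) it contains $2^\cont$ remote points, so in particular $V\cap\R{X}\neq\emptyset$. The principal obstacle is the bookkeeping in (b): one must simultaneously accommodate all nowhere dense sets (made possible for metrizable $X$ by the $\sigma$-disjoint base) while ensuring $2^\cont$ genuinely distinct remote limit points rather than only $\cont$ many branches of the tree.
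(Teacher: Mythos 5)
Your parts (a) and (c) are correct and essentially identical to the paper's argument: the paper also meets each basic set $\Ex(U)$ with $\Ex(U)\cap X\sp\ast\neq\emptyset$ by building an unbounded real-valued function vanishing off $U$ and taking the preimage of $\infty$ under its Stone extension, and (c) is exactly the combination of (a) and (b). For (b) the paper gives no proof at all --- it cites Chae--Smith and van Douwen --- so you are attempting something the paper deliberately outsources, and it is here that your sketch has a genuine gap.

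The step that fails is ``interleaving a suitable bookkeeping through countably many nowhere dense test sets.'' A point of $\beta X$ that avoids the closures of countably many prescribed nowhere dense sets is not thereby remote: remoteness requires $p\notin\cl[\beta X]{A}$ for \emph{every} nowhere dense $A\subset X$, and a non-compact metrizable space has far more than countably many nowhere dense sets (typically $2\sp{w(X)}$ of them), with no countable cofinal subfamily under inclusion of closures. No recursion of length $\omega$ along a tree can certify all of them, and your closing remark that this is ``the principal obstacle ... made possible by the $\sigma$-disjoint base'' names the problem without solving it. The constructions in the literature are not tree recursions with countable bookkeeping: roughly, for \emph{each} nowhere dense $A$ one uses the $\sigma$-discrete base to produce an open set $D_A$ that is completely separated from $A$ yet meets every member of a fixed discrete family of open sets; finite unions of nowhere dense sets are nowhere dense, so the closed sets $\cl[\beta X]{D_A}\cap G'$ (for a suitable compact $G_\delta$ piece $G'\subset G$) have the finite intersection property, and compactness of $\beta X$ --- not a countable recursion --- yields a common point, which is then remote. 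The count $2\sp\cont$ is obtained afterwards, much as you suggest, by running this over the $\U$-limits of a discrete $\omega$-indexed family for the $2\sp\cont$ free ultrafilters $\U$ on $\omega$. If you want a self-contained (b), you must replace the countable bookkeeping by this compactness argument over the directed family of all nowhere dense sets; otherwise, cite \cite{chaesmith} and \cite{vd51} as the paper does.
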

\begin{proof}
Condition $(a)$ has been observed before in \cite{peters} but for the sake of completeness we sketch a proof here. Consider a basic open subset $\Ex(U)$ of $\beta X$ that intersects $X\sp\ast$. Since $\cl{U}$ is not compact and $X$ is metrizable, there exists a closed discrete infinite subset $\{x_n:n<\omega\}\subset\cl{U}$. We can also assume that $\{x_n:n<\omega\}\subset U$. Now, $X$ is normal so there exists a continuous function $f:X\to\mathbb{R}$ with $X-U\subset f\sp\leftarrow[0]$ and $f(x_n)=n$ for each $n<\omega$. Then it is easy to see that $\beta f\sp\leftarrow[\mathbb{R}\sp\ast]$ is a non-empty set of type $G_\delta$ contained in $\Ex(U)\cap X\sp\ast$. The proof of $(b)$ is implicit in \cite{chaesmith} and explicit in \cite{vd51} for the separable case. Clearly $(c)$ follows from $(a)$ and $(b)$.
\end{proof}

 A function between topological spaces $f:X\to Y$ is called irreducible if it is closed\footnote{Other authors do not require irreducible functions to be closed.} and every time $C$ is a closed subset of $X$, then $f[C]=Y$ if and only if $C=X$ (so in particular it is onto). 

\begin{propo}\cite[Theorem 2.4]{gates}\label{propogates}
Let $X$ be a Tychonoff space, let $Y$ be a normal space and let $f:X\to Y$ be an irreducible continuous function. Then $\R{X}=\beta f\sp\leftarrow[\R{Y}]$ and $\beta f\restriction_{\R{X}}:\R{X}\to\R{Y}$ is a homeomorphism.
\end{propo}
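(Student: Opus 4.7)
The plan is to prove the set equality $\R{X}=(\beta f)^{-1}[\R{Y}]$ by two inclusions, and then promote the restricted map $\beta f\restriction_{\R{X}}$ to a homeomorphism onto $\R{Y}$. The whole proof rests on a ``nowhere-dense dictionary'' for irreducible maps. Writing $f^{\sharp}(U):=Y\setminus f[X\setminus U]$, which is open in $Y$ because $f$ is closed and nonempty whenever $U$ is (by irreducibility applied to the proper closed set $X\setminus U$), a short argument shows that a closed $A\subseteq X$ is nowhere dense iff $f[A]$ is closed nowhere dense in $Y$, and symmetrically a closed $B\subseteq Y$ is nowhere dense iff $f^{-1}[B]$ is closed nowhere dense in $X$.

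The inclusion $(\beta f)^{-1}[\R{Y}]\subseteq\R{X}$ is direct: if $\beta f(p)\in\R{Y}\subseteq Y^{\ast}$, then $p\in X^{\ast}$ (otherwise $\beta f(p)=f(p)\in Y$), and for any closed nowhere dense $A\subseteq X$ the dictionary gives $f[A]$ closed nowhere dense in $Y$, so $\beta f(p)\notin\cl[\beta Y]{f[A]}=\beta f[\cl[\beta X]{A}]$, forcing $p\notin\cl[\beta X]{A}$. Hence $p\in\R{X}$.

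The reverse inclusion $\R{X}\subseteq(\beta f)^{-1}[\R{Y}]$ reduces to the closure identity
\[
(\beta f)^{-1}[\cl[\beta Y]{B}]=\cl[\beta X]{f^{-1}[B]}\qquad\text{for every closed }B\subseteq Y.
\]
Granting this, for $p\in\R{X}$ and closed nowhere dense $B\subseteq Y$, the dictionary gives $f^{-1}[B]$ closed nowhere dense, so remoteness yields $p\notin\cl[\beta X]{f^{-1}[B]}=(\beta f)^{-1}[\cl[\beta Y]{B}]$, hence $\beta f(p)\notin\cl[\beta Y]{B}$. For $\beta f(p)\in Y^{\ast}$: if $\beta f(p)=y\in Y$ is not isolated, applying the identity with $B=\{y\}$ (closed nowhere dense) contradicts remoteness; if $y$ were isolated, then $f$ restricted to the clopen $f^{-1}(y)$ would be an irreducible continuous surjection onto the one-point space $\{y\}$, forcing $|f^{-1}(y)|=1$ (a Hausdorff space admitting an irreducible map to a point has no nonempty proper closed subsets) and hence $p\in X$, a contradiction.

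The main obstacle is the nontrivial inclusion $(\beta f)^{-1}[\cl[\beta Y]{B}]\subseteq\cl[\beta X]{f^{-1}[B]}$, equivalently the density of $f^{-1}[B]$ inside the compact set $(\beta f)^{-1}[\cl[\beta Y]{B}]$. Given $q$ in the left-hand side and a basic neighborhood $\Ex(U)\ni q$, the goal is $U\cap f^{-1}[B]\neq\emptyset$; supposing otherwise, $f[U]\cap B=\emptyset$, so $V:=f^{\sharp}(U)\subseteq f[U]$ is an open subset of $Y$ disjoint from the closed set $B$, and normality of $Y$ combined with the compact-map identity $\beta f[\cl[\beta X]{X\setminus U}]=\cl[\beta Y]{Y\setminus V}$ is what should drive the contradiction with $\beta f(q)\in\cl[\beta Y]{B}\cap\cl[\beta Y]{f[U]}$. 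Once the identity is in hand, the homeomorphism claim falls out from the basic-open correspondence: on remote points, $\beta f$ maps $\R{X}\cap\Ex(U)$ bijectively onto $\R{Y}\cap\Ex(f^{\sharp}(U))$ (injectivity uses $U\cap U'=\emptyset$ for disjoint basic neighborhoods, giving $f^{\sharp}(U)\cap f^{\sharp}(U')=\emptyset$; surjectivity and openness come from the already-established set equality), so $\beta f\restriction_{\R{X}}$ is a continuous open bijection and hence a homeomorphism.
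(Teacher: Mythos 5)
First, a point of order: the paper does not prove this proposition at all --- it is quoted from Gates \cite[Theorem 2.4]{gates} --- so there is no in-paper argument to compare against; your proposal has to stand on its own. Its architecture (the nowhere-dense dictionary for irreducible closed maps, the reduction of $\R{X}\subseteq(\beta f)^{\leftarrow}[\R{Y}]$ to the closure identity, and the two separate inclusions) is the standard and correct one, and the easy inclusion $(\beta f)^{\leftarrow}[\R{Y}]\subseteq\R{X}$ together with the dictionary itself are handled correctly.

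The problem is that the step you yourself flag as the main obstacle is exactly where the sketch does not close. Assuming $U\cap f^{\leftarrow}[B]=\emptyset$ you correctly conclude that $f[U]$ and $B$ are disjoint, but the contradiction you aim for --- that $\beta f(q)$ cannot lie in $\cl[\beta Y]{B}\cap\cl[\beta Y]{f[U]}$ --- does not follow from what you have: $f[U]$ need not be closed, its closure in $Y$ may perfectly well meet $B$, and normality only separates disjoint \emph{closed} sets. (Similarly, $q\notin\cl[\beta X]{X\setminus U}$ does not yield $\beta f(q)\notin\beta f\!\left[\cl[\beta X]{X\setminus U}\right]=\cl[\beta Y]{Y\setminus V}$, since $\beta f$ is not injective.) The missing idea is the standard shrinking argument: since $q\in\Ex(U)$ and $\beta X$ is normal, choose an open $W\subseteq X$ with $q\in\cl[\beta X]{W}$ and $\cl[X]{W}\subseteq U$; then $f\!\left[\cl[X]{W}\right]$ is closed in $Y$ (this is where closedness of $f$ enters) and disjoint from $B$, so by normality of $Y$ its closure in $\beta Y$ misses $\cl[\beta Y]{B}$, while $\beta f(q)$ lies in both --- a genuine contradiction. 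With that repair the closure identity, and hence the set equality, is correct. A smaller instance of the same issue occurs in the homeomorphism part: disjointness of $f^{\sharp}(U)$ and $f^{\sharp}(U')$ separates $\beta f(p)$ from $\beta f(p')$ only because a remote point of $Y$ cannot lie in the closures of two disjoint open subsets of $Y$ (by normality, the intersection of those closures in $\beta Y$ is the closure of a nowhere dense boundary); this fact needs to be stated and invoked explicitly for both injectivity and openness.
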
 

A space is extremally disconnected if the closure of every open set is also open. Recall that for every regular space $X$ there exists a pair $(EX,k_x)$ where $EX$ is an extremally disconnected regular space and $k_x:EX\to X$ is a perfect and irreducible continuous function (see \cite[Chapter 6]{porterwoods}). It can be proved that $EX$ is unique up to homeomorphism (in the sense of \cite[Theorem 6.7(a)]{porterwoods}). The pair $(EX,k_x)$ is called the \emph{absolute} (or \emph{projective cover}) of $X$. Two regular spaces $X$ and $Y$ are called \emph{coabsolute} if $EX$ is homeomorphic to $EY$. The following results concerning the absolute are relevant to us. The first one is an immediate consequence of the uniqueness of the absolute.

\begin{lemma}\label{perfirriscoabsolute}
If $f:X\to Y$ is a perfect and irreducible continuous function between Tychonoff spaces, then $X$ and $Y$ are coabsolute.
\end{lemma}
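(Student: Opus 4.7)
The plan is to construct, directly from the data, an extremally disconnected regular space together with a perfect irreducible map onto $Y$, and then appeal to uniqueness of the absolute to identify it with $EY$.

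Concretely, I would start from the absolute $k_X:EX\to X$ of $X$, which by hypothesis is perfect, irreducible, and continuous, with $EX$ extremally disconnected regular. Given the assumed perfect irreducible $f:X\to Y$, I would form the composition
\[
g=f\circ k_X:EX\longrightarrow Y.
\]
The first step is to verify that $g$ is perfect and irreducible. Perfectness is the standard fact that a composition of perfect maps is perfect (composition of closed maps is closed, and preimages of points under a composition are preimages under $k_X$ of preimages under $f$, each of which is compact, and continuous images of compacta are compact so the total preimage is closed inside a compact fiber lifted back; more cleanly, perfect maps form a category under composition). For irreducibility, given a proper closed $C\subsetneq EX$, irreducibility of $k_X$ gives $k_X[C]\subsetneq X$, and this image is closed because $k_X$ is closed; then irreducibility of $f$ gives $f[k_X[C]]\subsetneq Y$. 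Hence $g[C]\subsetneq Y$, as required, and $g$ is onto since both $f$ and $k_X$ are.

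At this point $(EX,g)$ satisfies exactly the defining properties of an absolute of $Y$: $EX$ is extremally disconnected and regular, and $g:EX\to Y$ is a perfect irreducible continuous surjection. Invoking the uniqueness clause of \cite[Theorem 6.7(a)]{porterwoods} referenced just above the lemma, any two such pairs for $Y$ are homeomorphic, so there is a homeomorphism $EX\to EY$ (one which moreover intertwines $g$ and $k_Y$, although for the statement only the homeomorphism is needed). Therefore $X$ and $Y$ are coabsolute.

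There is no real obstacle here; the only place one might pause is the irreducibility of the composition, which requires the closedness of $k_X$ to know that $k_X[C]$ is actually closed before feeding it into the irreducibility of $f$. Everything else is a direct appeal to the uniqueness of the projective cover, which the authors have already quoted.
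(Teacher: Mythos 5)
Your proposal is correct and is precisely the argument the paper has in mind: the paper dismisses this lemma as ``an immediate consequence of the uniqueness of the absolute,'' and your write-up simply fills in that consequence by composing $k_X$ with $f$, checking the composition is perfect and irreducible, and invoking \cite[Theorem 6.7(a)]{porterwoods}. The one point you rightly flag --- needing closedness of $k_X$ so that $k_X[C]$ is a closed proper subset before applying irreducibility of $f$ --- is handled automatically by the paper's convention that irreducible maps are closed.
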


\begin{coro}\label{coroabsolute}
If two Tychonoff spaces $X$ and $Y$ are coabsolute, then $\R{X}$ is homeomorphic to $\R{Y}$.
\end{coro}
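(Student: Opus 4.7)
The plan is to chain three homeomorphisms together: $\R{X} \cong \R{EX} \cong \R{EY} \cong \R{Y}$. This is a very formal proof once the pieces are assembled correctly.

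First I would apply Proposition \ref{propogates} to the canonical perfect irreducible map $k_X : EX \to X$ coming from the absolute. Since $k_X$ is in particular continuous, closed and irreducible, the proposition yields that $\beta k_X$ restricts to a homeomorphism from $\R{EX}$ onto $\R{X}$. (Implicit here is that the targets to which we apply Proposition \ref{propogates} meet its normality hypothesis; this is the only mildly delicate point and is automatic in the applications we care about, since in this paper the spaces $X, Y$ will be metrizable.)

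Next, since $EX$ and $EY$ are homeomorphic by hypothesis, fix a homeomorphism $h : EX \to EY$. Its \v Cech-Stone extension $\beta h : \beta EX \to \beta EY$ is then a homeomorphism. Because being a remote point is a purely topological property of the pair $(Z, \beta Z)$, $\beta h$ sends nowhere dense subsets of $EX$ to nowhere dense subsets of $EY$ and takes closures to closures; hence $\beta h$ restricts to a homeomorphism $\R{EX} \to \R{EY}$.

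Finally I would apply Proposition \ref{propogates} symmetrically to the perfect irreducible map $k_Y : EY \to Y$, obtaining a homeomorphism $\R{EY} \to \R{Y}$ as the restriction of $\beta k_Y$. Composing the three homeomorphisms
\[
\R{X} \xleftarrow{\;\beta k_X\;} \R{EX} \xrightarrow{\;\beta h\;} \R{EY} \xrightarrow{\;\beta k_Y\;} \R{Y}
\]
produces the desired homeomorphism. The main obstacle, if any, is the normality caveat regarding Proposition \ref{propogates}, but once that is granted the result follows essentially by transport of structure through the absolute; no additional combinatorial or set-theoretic input is needed.
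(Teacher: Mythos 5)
Your argument is exactly the intended one: the paper states this corollary without proof, as an immediate consequence of Proposition \ref{propogates} applied to the perfect irreducible maps $k_X:EX\to X$ and $k_Y:EY\to Y$ together with a homeomorphism $EX\to EY$, i.e.\ the chain $\R{X}\cong\R{EX}\cong\R{EY}\cong\R{Y}$. One remark on the normality caveat you raise: the corollary is stated for arbitrary Tychonoff spaces, so ``the spaces we care about are metrizable'' does not literally discharge the hypothesis of Proposition \ref{propogates}; what actually saves the argument in full generality is that $k_X$ is \emph{perfect} (not merely closed) and irreducible, and for perfect irreducible maps between Tychonoff spaces the remote-point correspondence $\R{X}=\beta f\sp\leftarrow[\R{Y}]$ holds without assuming the target normal (this is the standard form of the result for absolutes, cf.\ van Douwen \cite{vd51}). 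With that substitution your three-step transport of structure is complete.
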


We state two more technical results we will use.

\begin{lemma}\label{opendense}
Let $X$ be a normal space and $U\subset X$ be open and dense in $X$. Then $\R{X}=\R{U}$.
\end{lemma}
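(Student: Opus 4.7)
The plan is to let $\phi\colon\beta U\to\beta X$ be the continuous extension of the inclusion $U\hookrightarrow X$ furnished by the universal property of $\beta U$, and to argue that $\phi$ restricts to a canonical homeomorphism identifying $\R{U}$ with $\R{X}$.

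The first step is a two-way correspondence between nowhere dense subsets of $X$ and of $U$, resting only on $U$ being open and dense in $X$. If $A\subset X$ is nowhere dense in $X$, then $A\cap U$ is nowhere dense in $U$, because $U$ is open so the interior in $U$ of $\cl[U]{A\cap U}=\cl[X]{A\cap U}\cap U$ is contained in the interior in $X$ of $\cl[X]{A}$, which is empty. Conversely, if $B\subset U$ is nowhere dense in $U$, then $B$ is nowhere dense in $X$: any nonempty open $V\subset X$ with $V\subset\cl[X]{B}$ would satisfy $\emptyset\neq V\cap U\subset\cl[X]{B}\cap U=\cl[U]{B}$, contradicting that the latter has empty interior in $U$. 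As a special case, $X\setminus U$ is closed with empty interior in $X$, hence is itself nowhere dense, so every $p\in\R{X}$ satisfies $p\notin\cl[\beta X]{X\setminus U}$, that is, $p\in\Ex(U)$.

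The second and most delicate step is to check that $\phi$ is injective on $\phi^\leftarrow[\Ex(U)]$ and in fact restricts to a homeomorphism onto $\Ex(U)$. For $q\in\Ex(U)$, the separation $q\notin\cl[\beta X]{X\setminus U}$ combined with normality of $X$ yields a continuous function on $\beta X$ witnessing that $q$ lies entirely ``on the $U$ side''; concretely this forces the $z$-ultrafilter direction of any point in the fiber $\phi^\leftarrow[\{q\}]$ to be pinned down by how it sees zero-sets of $X$ that lie in $U$, so the fiber is a singleton.

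The third step combines the previous two. Writing $\tilde p\in\beta U$ for the unique preimage of $p\in\Ex(U)\setminus U$, the condition $p\in\R{X}$ unfolds as $p\notin\cl[\beta X]{A}$ for every nowhere dense $A\subset X$. Decomposing $A=(A\cap U)\cup(A\setminus U)$ and noting that $\cl[\beta X]{A\setminus U}\subset\cl[\beta X]{X\setminus U}$ is disjoint from $\Ex(U)$, this collapses to $p\notin\cl[\beta X]{A\cap U}$ for every such $A$. By the first step, this is equivalent to $\tilde p\notin\cl[\beta U]{B}$ for every nowhere dense $B\subset U$, i.e.\ $\tilde p\in\R{U}$. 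The main obstacle is the second step: proving the injectivity of $\phi$ over $\Ex(U)$ without assuming $C^\ast$-embedding of $U$ in $X$, which is where normality of $X$ must be used carefully.
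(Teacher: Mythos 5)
Your route is genuinely different from the paper's, which passes to the absolute $k_X\colon EX\to X$: there $EU=k_X^{\leftarrow}[U]$ is dense in the extremally disconnected space $EX$, hence $C^{\ast}$-embedded, and the conclusion is transferred back to $U$ and $X$ via Proposition \ref{propogates}. Your Steps 1 and 3 are fine, but Step 2 --- which you yourself flag as ``the main obstacle'' --- is not proved: the sentence about the $z$-ultrafilter being ``pinned down'' describes a hoped-for argument rather than giving one. The claim is true, and here is what is actually needed. Given $q\in\Ex(U)$, choose $f\in C(\beta X,[0,1])$ with $f(q)=1$ and $f\equiv0$ on $\cl[\beta X]{X\setminus U}$, and set $Z_0=f^{\leftarrow}\!\left[\,[1/2,1]\,\right]\cap X$. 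Then $Z_0$ is closed in $X$, contained in $U$, and $q\in\cl[\beta X]{Z_0}$. By normality $Z_0$ is $C^{\ast}$-embedded in $X$, hence also in the intermediate space $U$, so $\cl[\beta X]{Z_0}$ and $\cl[\beta U]{Z_0}$ are both copies of $\beta Z_0$ and $\phi$ carries one homeomorphically onto the other; this produces one preimage of $q$. Any second preimage would lie in $\cl[\beta U]{F}$ for some zero-set $F$ of $U$ with $F\cap Z_0=\emptyset$; since $\cl[X]{F}\subset F\cup(X\setminus U)$ is then a closed subset of $X$ disjoint from $Z_0$, normality gives $\cl[\beta X]{F}\cap\cl[\beta X]{Z_0}=\emptyset$, contradicting that $q$ lies in both. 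Without some such argument, Step 2 is a gap, not a lemma.

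There is a second, unacknowledged gap. Steps 1--3 at best identify $\R{X}$ with $\R{U}\cap\phi^{\leftarrow}[\Ex(U)]$; to conclude the stated equality $\R{U}=\R{X}$ you must also show $\R{U}\subset\phi^{\leftarrow}[\Ex(U)]$, i.e.\ that no remote point of $U$ lies over $\cl[\beta X]{X\setminus U}$. You prove the analogous fact on the $X$ side ($\R{X}\subset\Ex(U)$, because $X\setminus U$ is nowhere dense \emph{in $X$}), but on the $U$ side it is not automatic: $X\setminus U$ is not a subset of $U$, so a remote point of $U$ is not by definition required to avoid $\phi^{\leftarrow}\!\left[\cl[\beta X]{X\setminus U}\right]$, and showing that every point of that set lies in the closure in $\beta U$ of some nowhere dense subset of $U$ requires an argument you have not given. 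This is exactly the bookkeeping that the paper's detour through the $C^{\ast}$-embedded dense subspace $EU$ of $EX$ is designed to dispose of; as written, your proposal does not address it.
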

\begin{proof}
Consider the absolute $k_X:EX\to X$. Notice that $k_X\sp\leftarrow[U]$ is an open and dense subset of $EX$, thus, it is extremally disconnected.  It is easy to see that $k_X\!\!\restriction_{k_X\sp\leftarrow[U]}:k_X\sp\leftarrow[U]\to U$ is also an irreducible continuous function so in fact we can identify $k_X\sp\leftarrow[U]$ with $EU$ and $k_X\!\!\restriction_{k_X\sp\leftarrow[U]}$ with $k_U$. By \cite[6.2.C]{porterwoods}, $EU$ is a $C\sp\ast$-embedded subset of $EX$. This implies that $\cl[\beta EX]{EU}$ can be identified with $\beta EU$. No remote point of $EX$ can lie in $\beta EX-\beta EU$ because this set is contained in the closure of the nowhere dense subset $EX-EU$ of $EX$. From this it can be shown that $\R{EX}=\R{EU}$. By applying Proposition \ref{propogates} twice it follows that $\R{U}$ and $\R{X}$ are homeomorphic.
\end{proof}

\begin{propo}\label{subspaces}
Let $X$ be a normal space. Assume $Y$ is a regular closed subset of $X$ and identify $\cl[\beta X]{Y}$ with $\beta Y$. Then $\R{Y}=\R{X}\cap\cl[\beta X]{Y}$. Moreover, since $\cl{X-Y}$ is also a regular closed subset, we can write $\R{X}=\R{Y}\cup\R{\cl{X-Y}}$ and $\R{Y},\R{\cl{X-Y}}$ are disjoint clopen subsets of $\R{X}$.
\end{propo}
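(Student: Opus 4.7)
The plan is to reduce everything to the single identity $\R{Y}=\R{X}\cap\cl[\beta X]{Y}$; once this and its symmetric counterpart for $Z:=\cl{X-Y}$ are available, the remaining conclusions follow formally. Normality of $X$ together with $Y$ closed gives that $Y$ is $C\sp\ast$-embedded in $X$, which justifies identifying $\beta Y$ with $\cl[\beta X]{Y}$. Write $V=\mathrm{int}_X(Y)$ and $V'=X-Y=\mathrm{int}_X(Z)$; then $Y=\cl{V}$, $Z=\cl{V'}$, and $\bd{Y}=Y\cap Z=Y\setminus V$ is closed and nowhere dense in both $X$ and $Y$.

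The inclusion $\R{X}\cap\cl[\beta X]{Y}\subseteq\R{Y}$ rests on the observation that any $A\subseteq Y$ that is nowhere dense in $Y$ is also nowhere dense in $X$: a nonempty open subset of $X$ inside $\cl{A}\subseteq Y$ is automatically open in $Y$ and lies inside $\cl[Y]{A}$, contradicting the hypothesis. Then $\cl[\beta X]{A}=\cl[\beta Y]{A}$ by $C\sp\ast$-embedding, and this set is avoided by any $p\in\R{X}$. For the reverse inclusion, fix $p\in\R{Y}$ and a closed nowhere dense $A\subseteq X$, and decompose $A=(A\cap Y)\cup(A\setminus Y)$. A symmetric argument shows $A\cap Y$ is nowhere dense in $Y$, hence $p\notin\cl[\beta X]{A\cap Y}$. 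For the other piece $A\setminus Y\subseteq V'\subseteq Z$, I would invoke the classical Urysohn-type identity that holds for closed subsets of any normal space, namely $\cl[\beta X]{Y}\cap\cl[\beta X]{Z}=\cl[\beta X]{Y\cap Z}=\cl[\beta X]{\bd{Y}}$. Since $\bd{Y}$ is nowhere dense in $Y$ and $p\in\R{Y}\subseteq\cl[\beta X]{Y}$, the point $p$ cannot lie in $\cl[\beta X]{\bd{Y}}$, and therefore not in $\cl[\beta X]{Z}\supseteq\cl[\beta X]{A\setminus Y}$. Combining gives $p\notin\cl[\beta X]{A}$.

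The remaining clauses are quick bookkeeping. By definition $\beta X=\Ex(V)\cup\cl[\beta X]{X-V}$, and since $X-V=Z$ and $\Ex(V)\subseteq\cl[\beta X]{V}=\cl[\beta X]{Y}$, we get $\beta X=\cl[\beta X]{Y}\cup\cl[\beta X]{Z}$, so any $p\in\R{X}$ lies in $\R{Y}\cup\R{Z}$. Disjointness reuses the main identity: $\R{Y}\cap\R{Z}=\R{X}\cap\cl[\beta X]{Y}\cap\cl[\beta X]{Z}=\R{X}\cap\cl[\beta X]{\bd{Y}}=\emptyset$. Since $\R{Y}$ is closed in $\R{X}$ and complementary to $\R{Z}$, both are clopen. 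The only nontrivial input is the identity $\cl[\beta X]{Y}\cap\cl[\beta X]{Z}=\cl[\beta X]{Y\cap Z}$ for closed subsets of a normal space, a standard consequence of Urysohn's lemma; this is where the regular closed hypothesis plays its role, since it makes $Y\cap Z$ equal to the nowhere dense boundary $\bd{Y}$ that the definition of remote point must avoid.
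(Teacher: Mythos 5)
Your proof is correct, but it takes a genuinely different route from the paper's. The paper forms the topological direct sum $Z=Y\oplus\cl{X-Y}$, checks that the natural projection $\phi:Z\to X$ is perfect and irreducible (regular-closedness enters there: points of $\mathrm{int}_X(Y)$ and of $X-Y$ have unique preimages, and the closures of these two sets exhaust $Z$), and then invokes Proposition \ref{propogates} so that $\beta\phi$ transports the obvious clopen decomposition of $\R{Z}$ onto the desired one. You instead verify $\R{Y}=\R{X}\cap\cl[\beta X]{Y}$ pointwise: one inclusion from the fact that a nowhere dense subset of a closed subspace is nowhere dense in the ambient space, the other by splitting a closed nowhere dense $A\subseteq X$ as $(A\cap Y)\cup(A\setminus Y)$ and killing the second piece via the normality identity $\cl[\beta X]{Y}\cap\cl[\beta X]{\cl{X-Y}}=\cl[\beta X]{\bd{Y}}$ together with the observation that $\bd{Y}$ is nowhere dense in $Y$. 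Both argument lines are sound; the only step you gloss over is the claim that $A\cap Y$ is nowhere dense in $Y$, which is not quite ``symmetric'' to the first inclusion since it genuinely needs the density of $\mathrm{int}_X(Y)$ in $Y$ (i.e.\ regular-closedness), so it deserves a sentence of its own. The paper's route is shorter given the machinery already assembled and yields the identifications of $\R{Y}$ and $\R{\cl{X-Y}}$ with clopen pieces automatically; yours is more elementary and self-contained, resting only on the standard fact that $\cl[\beta X]{A}\cap\cl[\beta X]{B}=\cl[\beta X]{A\cap B}$ for closed subsets of a normal space, and it makes visible exactly where normality and regular-closedness are used.
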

\begin{proof}
Define $Z$ to be the direct sum of $Y$ and $\cl{X\setminus Y}$; formally, $Z=A_0\cup A_1$, where $A_0=Y\times\{0\}$ and $A_1=\cl{X\setminus Y}\times\{1\}$. Let $\phi:Z\to X$ be the natural projection to the first coordinate. It is easy to see that $\phi$ is a perfect and irreducible continuous function so we may apply Proposition \ref{propogates} to obtain that $\beta\phi\!\!\restriction_{\R{Z}}:\R{Z}\to\R{X}$ is a homeomorphism. Notice that $A_0$ and $A_1$ are complementary clopen subsets of $Z$. Thus, $\cl[\beta Z]{A_i}$ can be identified with $\beta A_i$ for $i\in2$ and $\beta A_0\cap\beta A_1=\emptyset$. From this it is straightforward that $\R{A_i}=\R{X}\cap\cl[\beta Z]{A_i}$ for $i\in 2$, $\R{Z}=\R{A}\cup\R{B}$ and $\R{A}\cap\R{B}=\emptyset$. From the fact that both $\phi\!\!\restriction_{A_0}:A_0\to Y$ and $\phi\!\!\restriction_{A_1}:A_1\to\cl{X\setminus Y}$ are homeomorphisms it is not hard to prove that $\beta\phi[\R{A_0}]=\R{Y}$ and $\beta\phi[\R{A_1}]=\R{\cl{X\setminus Y}}$. The result follows from these observations.
\end{proof}

\section{Paracompact $M$-spaces}\label{paracompact}

To make clear how far we can get by considering metrizable spaces only, we quote the following result. Recall that a space $X$ is an $M$-space if there exists a sequence $\{\C_n:n<\omega\}$ of covers of $X$ such that
\begin{itemize}
\item[$(i)$] if $x_n\in\st{x,\C_n}$ for each $n<\omega$, then $\{x_n:n<\omega\}$ has a cluster point,
\item[$(ii)$] for each $n<\omega$, $\C_{n+1}$ star-refines $\C_n$.
\end{itemize}

\begin{propo}\cite{poncoabsolutes}\label{ponomarevpropo}
Let $X$ be a Tychonoff space. Then the following are equivalent
\begin{itemize}
\item[$(a)$] $X$ is coabsolute with a metrizable space,
\item[$(b)$] there exists a metrizable space that is a perfect and irreducible continuous image of $X$,
\item[$(c)$] $X$ is a paracompact $M$-space with a $\sigma$-locally finite $\pi$-base.
\end{itemize}
\end{propo}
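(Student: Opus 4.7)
The plan is to prove the cycle $(b)\Rightarrow(a)\Rightarrow(c)\Rightarrow(b)$. The first implication is immediate from Lemma \ref{perfirriscoabsolute}, since a perfect irreducible map onto a metrizable space exhibits $X$ as coabsolute with that space.

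For $(a)\Rightarrow(c)$, assume $EX$ is homeomorphic to $EM$ with $M$ metrizable. I would first verify that the common absolute $EX\cong EM$ is itself a paracompact $M$-space with a $\sigma$-locally finite $\pi$-base: paracompactness comes from $k_M:EM\to M$ being a perfect map onto the paracompact space $M$ (perfect preimages of paracompact spaces are paracompact); the paracompact $M$-space property is Morita's characterization applied to the perfect map $k_M$ to the metrizable $M$; and a $\sigma$-locally finite $\pi$-base of $EM$ is obtained by pulling back a $\sigma$-locally finite Nagata--Smirnov base of $M$ along $k_M$ and invoking irreducibility. Each of these properties is then pushed down to $X$ via $k_X:EX\to X$: paracompactness, since $X$ is a closed continuous image of $EX$; the $\sigma$-locally finite $\pi$-base, via the assignment $A\mapsto X\setminus k_X[EX\setminus A]$, where compactness of the fibers of $k_X$ is used to preserve local finiteness; and the paracompact $M$-space property via the standard fact that it is preserved by perfect maps.

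The implication $(c)\Rightarrow(b)$ is the Ponomarev construction and is the main obstacle. Morita's theorem for paracompact $M$-spaces provides a perfect map $f_0:X\to N_0$ onto a metrizable $N_0$, but $f_0$ is rarely irreducible. Given a $\sigma$-locally finite $\pi$-base $\U=\bigcup_{n<\omega}\U_n$ of $X$, one enriches $f_0$ with a sequence of auxiliary continuous maps that, at level $n$, encode (in a metrizable codomain) which members of $\U_n$ are met by the fiber of a given point; the diagonal product of $f_0$ with these auxiliary maps, restricted to its image, yields a perfect map into a metrizable space. A Zorn's lemma argument then produces a closed subspace of the target on which the restricted map is still perfect and, crucially, becomes irreducible, yielding the sought perfect irreducible $f:X\to M$ with $M$ metrizable. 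The technical heart is arranging the auxiliary maps to interact correctly with the $\pi$-base while keeping the target metrizable; this is where the $\sigma$-locally finite hypothesis is essential.
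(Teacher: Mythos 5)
First, a point of comparison: the paper does not prove this proposition at all — it is quoted from Ponomarev \cite{poncoabsolutes}, with only the remark that Ponomarev's ``paracompact $p$-space'' coincides with ``paracompact $M$-space with a $\sigma$-locally finite $\pi$-base'' as formulated here. So your attempt must stand on its own. Your cycle $(b)\Rightarrow(a)\Rightarrow(c)\Rightarrow(b)$ is reasonable, and the first two legs are essentially sound: $(b)\Rightarrow(a)$ is indeed Lemma \ref{perfirriscoabsolute}; pulling a $\sigma$-locally finite base of $M$ back along $k_M$ does give a $\sigma$-locally finite $\pi$-base of $EM\cong EX$ (irreducibility of $k_M$ gives the $\pi$-base property), and pushing it forward by $A\mapsto k_X\sp\sharp[A]$ preserves local finiteness because any point of $k_X\sp\sharp[W]\cap k_X\sp\sharp[A]$ has its entire compact fiber inside $W\cap A$. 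One caveat: ``the paracompact $M$-space property is preserved by perfect maps'' in the \emph{image} direction is Filippov's theorem on perfect images of paracompact $p$-spaces (equivalently, Morita's theorem on quasi-perfect images of $M$-spaces combined with Michael's theorem on closed images of paracompact spaces); this is true but is itself a substantial theorem and must be cited, not asserted.

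The genuine gap is in $(c)\Rightarrow(b)$, which you correctly identify as the heart of the matter but do not actually carry out. Two concrete problems. First, the assignment $y\mapsto\{U\in\U_n:f_0\sp\leftarrow(y)\cap U\neq\emptyset\}$ is not continuous in any useful sense, so ``auxiliary maps encoding which members of $\U_n$ are met by the fiber'' do not exist as described. Second, and fatally, a Zorn's lemma argument on a closed subspace of the \emph{target} cannot create irreducibility: replacing the target $N$ by a closed $C\subset N$ forces the domain to shrink to $f\sp\leftarrow[C]$, so either $C\supset f[X]$ and nothing has changed, or the resulting map is no longer defined on all of $X$. (The Zorn argument in Proposition \ref{dimension} shrinks the \emph{domain}, which is harmless there because any perfect irreducible preimage suffices; here the map is required to have domain exactly $X$.) The repair is to build irreducibility into the map directly rather than minimizing afterwards: shrink each $U\in\U_n$ to a nonempty cozero set $\mathrm{coz}(h_U)\subset U$ with $h_U:X\to[0,1]$, let $h_n=(h_U)_{U\in\U_n}$ map into a metrizable space (continuity uses local finiteness of $\U_n$), and let $F$ be the diagonal of $f_0$ with all the $h_n$, corestricted to its image $M$. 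Then $F$ is perfect because $f_0$ is, $M$ is metrizable, and $F$ is irreducible because for any nonempty open $V\subset X$ one may choose $U\in\U$ with $U\subset V$ and any $x$ with $h_U(x)>0$; the fiber $F\sp\leftarrow(F(x))$ lies in $h_U\sp\leftarrow(h_U(x))\subset\mathrm{coz}(h_U)\subset V$, whence $F\sp\sharp[V]\neq\emptyset$. No minimization is needed.
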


Actually, in \cite{poncoabsolutes}, $(c)$ says ``paracompact $p$-space'' (in the sense of Arkhangel'ski\u\i) but this is equivalent to the formulation we have given (see \cite{handbookgenmetric} for details).

So according to Propositions \ref{propogates} and \ref{ponomarevpropo}, we will be able to obtain results about remote points of paracompact $M$-spaces. However, our results will be stated in terms of metrizable spaces only. The main reason for doing this is that for our arguments it is enough to consider metrizable spaces. Moreover, the corresponding results for paracompact $M$-spaces can be easily obtained by using Proposition \ref{ponomarevpropo}.

\begin{remark}
By Proposition \ref{ponomarevpropo}, it is not hard to prove that in Proposition \ref{coabsolute}, Theorems \ref{remotebaire} and \ref{coabsoluteQ} and Corollary \ref{remoteQ} we can replace ``(completely) metrizable'' by ``(\v Cech-complete) paracompact $M$-space with a $\sigma$-locally finite $\pi$-base''.
\end{remark}

We will now address another technical matter. Notice that Proposition \ref{propogates} talks about irreducible closed mappings, while talking about coabsolutes we asked that the function be perfect. We now show that there is nothing else we can obtain using Proposition \ref{propogates} inside the class of paracompact $M$-spaces.

\begin{propo}\label{irrisperfect}
Let $f:X\to Y$ be an irreducible continuous function between paracompact $M$-spaces. Then $f$ is perfect.
\end{propo}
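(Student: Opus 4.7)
Since irreducibility already gives closedness, the plan is to verify that $f$ has compact fibres. Fix $y\in Y$. If $y$ is isolated in $Y$ then $f^{\leftarrow}[y]$ is clopen and the restriction $f\!\!\restriction_{f^{\leftarrow}[y]}$ is an irreducible constant map onto a singleton, forcing $f^{\leftarrow}[y]$ to be a singleton. If $y$ is non-isolated, irreducibility forces $f^{\leftarrow}[y]$ to have empty interior (else it would be clopen, making $\{y\}$ open); since $f^{\leftarrow}[y]$ is closed in the paracompact $X$ it is paracompact, so it suffices to verify countable compactness.

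For the main case I would argue by contradiction: assume $\{x_k:k<\omega\}\subseteq f^{\leftarrow}[y]$ is closed discrete in $X$. As paracompact $M$-spaces, $X$ and $Y$ admit perfect maps onto metrizable spaces, so fix perfect $g:X\to M$ and $h:Y\to N$ with $M,N$ metrizable. Because fibres of $g$ are compact, pass to a subsequence so that $\{g(x_k)\}$ is distinct and closed discrete in $M$; then metrizability of $M$ lets me surround it by a \emph{discrete} family of open sets $B_k\subseteq M$ with $g(x_k)\in B_k$, and the pullbacks $V_k:=g^{\leftarrow}[B_k]$ form a discrete family of open sets in $X$ with $x_k\in V_k$. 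Irreducibility of $f$ then produces nonempty pairwise disjoint open sets $W_k:=Y\setminus f[X\setminus V_k]$ with $f^{\leftarrow}[W_k]\subseteq V_k$; applying irreducibility once more to $f^{\leftarrow}[U]\cap V_k$ for an arbitrary open $U\ni y$ yields $y\in\overline{W_k}$ for every $k$.

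The contradiction comes from playing two structures against each other. Using the metric on $N$, choose $y_k'\in W_k\cap h^{\leftarrow}[B_N(h(y),1/k)]$, nonempty because the second factor is an open neighbourhood of $y$; then $h(y_k')\to h(y)$, and perfectness of $h$ together with compactness of $h^{\leftarrow}[h(y)]$ produces a cluster point $y^*\in h^{\leftarrow}[h(y)]$ of the (distinct) sequence $\{y_k'\}$. On the other hand, pick $x_k'\in f^{\leftarrow}[y_k']\subseteq V_k$: discreteness of $\{V_k\}$ in $X$ forces $\{x_k'\}$ to be closed discrete in $X$, and closedness of $f$ forces $\{y_k'\}=f[\{x_k'\}]$ to be closed in $Y$. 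Hence $y^*=y_{j_0}'$ for some $j_0$; but $W_{j_0}$ is then an open neighbourhood of $y^*$ containing only the single element $y_{j_0}'$ of $\{y_k'\}$, contradicting $y^*$ being a cluster point.

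The step I expect to need the most care is upgrading the natural pairwise disjoint neighbourhoods around the $x_k$ to a genuinely \emph{discrete} family $\{V_k\}$ in $X$; this is where the perfect map $g$ into the metrizable $M$ is essential, and it is precisely this discreteness that promotes $\{y_k'\}$ to a closed subset of $Y$, trapping the cluster point $y^*$ among its own elements. Without it, $y^*$ could escape $\{y_k'\}$ and the contradiction would collapse.
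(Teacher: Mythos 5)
Your proof is correct, but it follows a genuinely different route from the paper's. The paper composes $f$ with a perfect irreducible map $g\colon Y\to M$ onto a metrizable space and, imitating Va\u\i n\v ste\u\i n's Lemma and the Hanai--Morita--Stone theorem, shows that the \emph{boundary} of each fibre of $h=g\circ f$ is countably compact by manipulating the star-refining cover sequence $\{\C_n\}$ witnessing that $X$ is an $M$-space; paracompactness then upgrades countable compactness to compactness, and irreducibility disposes of fibres with non-empty interior. You never touch the cover sequence: you attack the fibres of $f$ directly, assume an infinite closed discrete subset, expand it to a discrete open family in $X$ (you route this through a perfect map onto a metrizable space, though collectionwise normality of the paracompact $X$ would already suffice), push it through $f^{\sharp}$ to a disjoint family $\{W_k\}$ of open sets of $Y$ all clustering at $y$ --- your identity $f^{\sharp}[f^{\leftarrow}[U]\cap V_k]=U\cap W_k$ is the right way to see $y\in\overline{W_k}$ --- and then use the perfect map $h\colon Y\to N$ to select a sequence $y_k'$ whose cluster point is trapped inside the closed discrete set $f[\{x_k'\}]$ and isolated by some $W_{j_0}$, a contradiction. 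The two arguments use the $M$-space hypothesis in complementary places: the paper exploits the cover structure of $X$, while you use it only through Morita's characterization of paracompact $M$-spaces as perfect preimages of metrizable spaces, with essentially all the weight on the $Y$ side; your version is therefore formally a bit more general ($X$ paracompact Hausdorff would do). The one external fact you lean on --- that a paracompact $M$-space admits a perfect map onto a metrizable space, without the $\sigma$-locally finite $\pi$-base needed in Proposition \ref{ponomarevpropo} for the irreducible version --- is standard (Morita), and since you only need perfectness, not irreducibility, of $g$ and $h$, this is legitimate. All the individual steps (the passage to a subsequence with distinct images, the discrete open expansion in a metric space, the cluster-point argument for closed maps with compact fibres) check out.
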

\begin{proof}
By $(b)$ in Proposition \ref{ponomarevpropo}, there exists a metrizable space $M$ and a perfect and irreducible continuous function $g:Y\to M$. We will follow the proof of Va\u\i n\v ste\u\i n's Lemma and the Hanai-Morita-Stone Theorem from \cite[4.4.16]{eng}. Let $h=g\circ f$.

\vskip11pt
\noindent{\it Claim:} For every $p\in M$, $\bd{h\sp\leftarrow(p)}$ is countably compact.
\vskip11pt

To prove the Claim, let $\{x_n:n<\omega\}\subset \bd{h\sp\leftarrow(p)}$. Since $M$ is metrizable let $\{U_n:n<\omega\}$ be a local open basis of $p$ such that $\cl[M]{U_{n+1}}\subset U_n$ for each $n<\omega$. Let $\{\C_n:n<\omega\}$ be the sequence of open covers for $X$ given by the definition of an $M$-space. For each $n<\omega$, let $y_n\in(h\sp\leftarrow[U_n]-h\sp\leftarrow(p))\cap\st{x_n,\C_n}$. Then it is easy to see that $\{h(y_n):n<\omega\}$ is a sequence converging to $p$. Since $h$ is a closed function, it follows that there exists a cluster point $q\in h\sp\leftarrow(p)$ of $\{y_n:n<\omega\}$.

We construct a strictly increasing function $\phi:\omega\to\omega$ with $\phi(n)\geq n+1$ for all $n<\omega$ as follows: for each $n<\omega$ let $\phi(n)$ be such that $y_{\phi(n)}\in\st{q,\C_{n+1}}$. Since $y_{\phi(n)}\in\st{x_{\phi(n)},\C_{\phi(n)}}$ and $\phi(n)\geq n+1$, by condition $(ii)$ in the definition of an $M$-space we obtain that $x_{\phi(n)}\in\st{q,\C_n}$. Thus, by condition $(i)$ in the definition of an $M$-space we obtain that $\{x_{\phi(n)}:n<\omega\}$ has a cluster point. Such cluster point must be in $\bd{h\sp\leftarrow(p)}$ so the Claim follows.

Now, by the Claim and the fact that $X$ is paracompact we have that $\bd{h\sp\leftarrow(p)}$ is compact for each $p\in M$. Since $h$ is irreducible, either $\bd{h\sp\leftarrow(p)}=h\sp\leftarrow(p)$ or $h\sp\leftarrow(p)$ is a singleton. Thus, $h$ is perfect and so is $f$.
\end{proof}

\section{Dimension and Local Compactness}\label{dimlocomp}

We start by showing that it is enough to consider \emph{strongly $0$-dimensional} spaces. Recall that a Tychonoff space is strongly $0$-dimensional if every two disjoint zero subsets can be separated by a clopen subset.

\begin{propo}\label{dimension}
Each metrizable space is coabsolute with a strongly $0$-dimensional metrizable space.
\end{propo}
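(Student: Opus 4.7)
The approach is to exhibit a perfect irreducible continuous surjection $f\colon Y\to X$ from a strongly $0$-dimensional metrizable space $Y$; by Lemma \ref{perfirriscoabsolute}, $X$ and $Y$ are then coabsolute, which is exactly what the proposition asserts.

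The starting point is the classical theorem of Morita that every metrizable space is the image, under a perfect continuous surjection, of a strongly $0$-dimensional metrizable space (see \cite{eng}). Fix such a map $g\colon M\to X$, with $M$ strongly $0$-dimensional metrizable. The composition $g$ will typically be very far from irreducible, so the next step trims it down.

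Next I would extract an irreducible restriction via a Zorn's lemma argument. Let $\mathcal{F}$ be the family of closed sets $C\subseteq M$ with $g[C]=X$, ordered by reverse inclusion. For a chain $\{C_\alpha\}$ in $\mathcal{F}$, the intersection $C_\ast=\bigcap_\alpha C_\alpha$ is closed in $M$; moreover, for each $x\in X$ the family $\{g\sp\leftarrow(x)\cap C_\alpha\}$ is a decreasing family of nonempty compact subsets of the compact fiber $g\sp\leftarrow(x)$ (using that $g$ is perfect), so it has nonempty intersection. Thus $g[C_\ast]=X$, so $C_\ast\in\mathcal{F}$, and Zorn's lemma yields a minimal element $Y\in\mathcal{F}$. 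Set $f=g\!\!\restriction_Y$.

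Finally I would verify the required properties of $f$. Since $Y$ is closed in $M$, it is metrizable and strongly $0$-dimensional (for metrizable spaces strong $0$-dimensionality coincides with $\mathrm{ind}=0$ and is closed-hereditary). The map $f$ is perfect as the restriction of a perfect map to a closed invariant set, surjective by the definition of $\mathcal{F}$, and irreducible because, by minimality of $Y$, any proper closed $C\subsetneq Y$ satisfies $C\notin\mathcal{F}$, i.e.\ $f[C]\neq X$. Lemma \ref{perfirriscoabsolute} then gives that $X$ and $Y$ are coabsolute, completing the proof. The only nontrivial ingredient is Morita's theorem invoked at the start; the Zorn's lemma step and the checks at the end are routine, so I foresee no real obstacle.
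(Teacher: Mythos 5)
Your argument is essentially the paper's: Morita's theorem provides a perfect continuous surjection onto $X$ from a strongly $0$-dimensional metrizable space (the paper takes a subspace of $\baire{\kappa}$), a Kuratowski--Zorn argument on closed sets mapping onto $X$ (with exactly your compact-fibers verification for chains) yields a minimal $Y$ on which the restriction is perfect and irreducible, and Lemma \ref{perfirriscoabsolute} concludes. One small caveat: your parenthetical justification that strong $0$-dimensionality coincides with $\mathrm{ind}=0$ for metrizable spaces is false in general (Roy's example), but this does not affect the proof, since $\dim$ is monotone on closed subspaces of normal spaces, so $Y$ is indeed strongly $0$-dimensional.
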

\begin{proof}
Let $X$ be a metrizable space. By a result of Morita (\cite{morita}, see \cite[4.4.J]{eng}) there is some infinite cardinal $\kappa$, a space $Z\subset\baire{\kappa}$ and a perfect continuous surjection $g:Z\to X$. Using the Kuratowski-Zorn Lemma it is easy to find a closed subset $Y\subset Z$ minimal with the property that $g[Y]=X$. Then $f=g\restriction_Y:Y\to X$ is easily seen to be a perfect and irreducible continuous function. Notice that $Y$ is also strongly $0$-dimensional because it is a subspace of $\baire{\kappa}$ (\cite[7.3.4]{eng}). Then $X$ and $Y$ are coabsolute by Lemma \ref{perfirriscoabsolute}.
\end{proof}

Thus, to study the remote points of a metrizable space $X$ it is enough to assume that $X$ is strongly $0$-dimensional by Propositions \ref{propogates} and \ref{dimension}.

The next step is to see that local compacteness is distinguished by remote points. For any space $X$, let $LX$ be the points where $X$ is locally compact and $NX=X-\cl{LX}$. Gates \cite{gates} has already noticed the following Corollary of Proposition \ref{subspaces}.

\begin{lemma}\label{loccomplemma}
Let $X$ be a normal space. Then $\R{X}$ is homeomorphic to the direct sum $\R{\cl{LX}}\oplus\R{\cl{NX}}$.
\end{lemma}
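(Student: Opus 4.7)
The plan is to invoke Proposition \ref{subspaces} with $Y=\cl{LX}$, since the whole content of the lemma is really just a repackaging of that proposition in the language of $LX$ and $NX$. The only nontrivial observation needed is that $\cl{LX}$ is a regular closed subset of $X$. This is immediate once one notes that $LX$ is an open subset of $X$: if $X$ is locally compact at a point $x$, then there is an open neighborhood $U$ of $x$ with $\cl{U}$ compact, and every point of $U$ has the same $U$ as a witness to local compactness, so $U\subset LX$. Therefore $LX$ is open and $\cl{LX}$ is regular closed.

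With this observation in hand, set $Y=\cl{LX}$. By the definition $NX=X-\cl{LX}=X-Y$, so $\cl{X-Y}=\cl{NX}$, which is itself regular closed (being the closure of the open set $NX$). Proposition \ref{subspaces} then applies verbatim and gives
\[
\R{X}=\R{\cl{LX}}\cup\R{\cl{NX}},
\]
where the two sets on the right are disjoint clopen subsets of $\R{X}$. A space which is the union of two disjoint clopen subsets is (canonically) homeomorphic to their direct sum, so $\R{X}\cong \R{\cl{LX}}\oplus \R{\cl{NX}}$ as desired.

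No real obstacle is expected: the lemma is a corollary of Proposition \ref{subspaces} in the same sense Gates observed, and the only sanity check is the openness of $LX$, which is a standard property of the local compactness predicate in any topological space.
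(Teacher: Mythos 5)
Your proof is correct and follows exactly the route the paper intends: the paper gives no separate argument for this lemma, stating only that it is a corollary of Proposition \ref{subspaces} (as Gates observed), and your application of that proposition with $Y=\cl{LX}$, together with the observation that $LX$ is open so that $\cl{LX}$ and $\cl{NX}=\cl{X-\cl{LX}}$ are regular closed, is precisely the intended deduction.
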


Now we must find a topological way to distinguish between $\R{\cl{LX}}$ and $\R{\cl{NX}}$. A point $p$ in a space $X$ is called a \emph{$\kappa$-point} of $X$, where $\kappa$ is a cardinal, if there is a collection $\mathcal{S}$ of pairwise disjoint open subsets of $X$ such that $|\mathcal{S}|\geq\kappa$ and $p\in\cl{U}$ for each $U\in\mathcal{S}$. If $p$ is not a $2$-point of $X$ it is said that $X$ is \emph{extremally disconnected at} $p$. Notice that a space is extremally disconnected if and only if it is extremally disconnected at each of its points.

\begin{propo}\cite[Theorem 5.2]{vd82}\label{cpointsvd}
If $X$ is a realcompact, locally compact and non-compact Tychonoff space, then each point of $X\sp\ast$ is a $\cont$-point of $X\sp\ast$.
\end{propo}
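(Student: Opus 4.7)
The plan is to exploit realcompactness (to place $p$ inside a compact $G_\delta$ subset of $X\sp\ast$) and local compactness (to build many pairwise disjoint open subsets of $X$ with compact closure accumulating on that $G_\delta$), and then to promote these to $\cont$ pairwise disjoint open subsets of $X\sp\ast$ via a standard almost disjoint family on $\omega$.

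First I would use realcompactness of $X$ to choose a continuous $F:\beta X\to[0,1]$ with $F>0$ on $X$ and $G:=F^{-1}(0)$ a non-empty compact $G_\delta$ of $\beta X$ containing $p$ and contained in $X\sp\ast$; local compactness ensures that $X$ is open in $\beta X$, and hence $X\sp\ast$ is compact. Then I would inductively construct a pairwise disjoint sequence $\{U_n:n<\omega\}$ of non-empty open subsets of $X$ with compact closures such that $F(\cl{U_n})\subseteq[0,2^{-n}]$: at stage $n$ the set $K_n=\bigcup_{i<n}\cl{U_i}$ is compact and $F$ is positive on it, so one picks $y_n\in X$ with $F(y_n)$ smaller than both $2^{-n}$ and $\inf_{K_n}F$ and then shrinks to a relatively compact open neighborhood of $y_n$ inside $\{F<2^{-n}\}\setminus K_n$.

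Fixing any almost disjoint family $\{A_\alpha:\alpha<\cont\}$ of infinite subsets of $\omega$, I would set $W_\alpha=\bigcup_{n\in A_\alpha}U_n$ and $V_\alpha=\Ex(W_\alpha)\cap X\sp\ast$. For $\alpha\neq\beta$ the intersection $W_\alpha\cap W_\beta$ is a finite union of the $U_n$'s, hence has compact closure in $X$; combined with the identity $\Ex(W_\alpha)\cap\Ex(W_\beta)=\Ex(W_\alpha\cap W_\beta)$ (a direct computation from the definition of $\Ex$) and the easy observation that $\Ex(U)\subseteq X$ whenever $\cl{U}$ is compact in $X$, this yields $V_\alpha\cap V_\beta=\emptyset$. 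Each $V_\alpha$ is non-empty because $W_\alpha$ has non-compact closure, so we already obtain $\cont$ pairwise disjoint non-empty open subsets of $X\sp\ast$.

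The main obstacle is to guarantee that $p\in\cl[X\sp\ast]{V_\alpha}$, equivalently $p\in\cl[\beta X]{W_\alpha}$, for every $\alpha$. This is free when $G=\{p\}$, because then $\{F^{-1}[0,1/k):k<\omega\}$ is a neighborhood base of $p$, every such neighborhood contains $U_n$ for all sufficiently large $n$, and $p$ becomes a $\beta X$-cluster point of every cofinal subsequence of $\{y_n\}$. In the general case $G$ may properly contain $p$, and one must coordinate the construction of the $U_n$'s with the choice of the AD family so that each $\{y_n:n\in A_\alpha\}$ has $p$ as a cluster point; the standard move is to spread the $y_n$'s densely within the trace on $X$ of a countable $\pi$-base at $p$ (available because $G$ is a compact $G_\delta$), and then to extract $\{A_\alpha\}$ inside the filter $\{A\subseteq\omega:p\in\cl[\beta X]{\{y_n:n\in A\}}\}$ on $\omega$, which can be shown to support an AD family of size $\cont$.
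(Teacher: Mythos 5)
The paper does not prove this proposition at all --- it is imported verbatim from van Douwen \cite[Theorem 5.2]{vd82} --- so your attempt has to be measured against the known argument rather than against anything in the text. Your preliminary steps are sound: realcompactness does give $F:\beta X\to[0,1]$ with $F>0$ on $X$ and $p\in F\sp\leftarrow(0)\subset X\sp\ast$; the disjoint relatively compact $U_n$ with $F\leq 2\sp{-n}$ on $\cl{U_n}$ form a locally finite family in $X$; and the computation $\Ex(W_\alpha)\cap\Ex(W_\beta)=\Ex(W_\alpha\cap W_\beta)\subset X$ correctly produces $\cont$ pairwise disjoint non-empty open subsets of $X\sp\ast$. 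The fatal problem is exactly the step you flag, and the repair you sketch cannot work. Test the construction on $X=\omega$: each $U_n$ is a singleton $\{y_n\}$, each $V_\alpha=\Ex(W_\alpha)\cap\omega\sp\ast$ is the clopen set $W_\alpha\sp\ast$, so $p\in\cl[\omega\sp\ast]{V_\alpha}$ if and only if $W_\alpha$ belongs to the ultrafilter $p$ --- and since the $W_\alpha$ are almost disjoint this happens for at most \emph{one} $\alpha$. Thus no choice of almost disjoint family of whole ``blocks'' can put $p$ in the closure of even two of your $V_\alpha$. Both ingredients of your proposed fix break down for the same reason: (i) no point of $\omega\sp\ast$ has a countable $\pi$-base (given infinite $B_k\subset\omega$, $k<\omega$, take $C$ splitting every $B_k$ into two infinite pieces; whichever of $C,\omega-C$ lies in $p$ is a neighbourhood almost containing no $B_k$), and a compact $G_\delta$ only has a countable \emph{outer} neighbourhood base, which controls where cluster points of the $y_n$ land inside $G$ but not that $p$ itself is one of them; (ii) the family $\{A\subset\omega:p\in\cl[\beta X]{\{y_n:n\in A\}}\}$ is, when non-empty, an ultrafilter on $\omega$ (the $y_n$ are $C\sp\ast$-embedded because $\{U_n:n<\omega\}$ is locally finite, so $\cl[\beta X]{\{y_n:n<\omega\}}$ is a copy of $\beta\omega$), and no filter of infinite sets closed under finite intersections contains two almost disjoint members, let alone $\cont$ of them.

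The missing idea is that the $\cont$ disjoint open sets must not be unions of whole blocks: each one has to reach into \emph{every} member of the induced ultrafilter $u$ on $\omega$. Concretely, one needs an almost disjoint family $\{A_U:U\in u\}$ with $A_U\subset U$ infinite, and then for each $U$ an almost disjoint family $\{A_U\sp\xi:\xi<\cont\}$ of infinite subsets of $A_U$, so that $V_\xi$ gathers the pieces indexed by $\xi$ across all $U\in u$; disjointness and $p\in\cl[\omega\sp\ast]{V_\xi}$ then both hold. The existence of such an almost disjoint refinement of an arbitrary ultrafilter is the Balcar--Vojt\'a\v s theorem --- an easy diagonalization under CH but a genuinely hard ZFC theorem --- and transferring the resulting $\cont$-point of $\omega\sp\ast$ back to $p\in X\sp\ast$ requires the remainder map $X\sp\ast\to\omega\sp\ast$ to be open at $p$, which is where local compactness and realcompactness are really used (hence the title of \cite{vd82}). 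Your outline contains neither of these two essential ingredients, so the proof has a genuine gap.
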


The following lemma is folklore.

\begin{lemma}\label{denseed}
Let $D$ be a dense subset of a space $X$. If $X$ is extremally disconnected at each point of $D$, then $D$ is extremally disconnected.
\end{lemma}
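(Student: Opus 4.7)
My plan is to prove the contrapositive at each point: take $p\in D$ and show that if $D$ fails to be extremally disconnected at $p$, then so does $X$. By the characterization in the paper, it suffices to show that $p$ is not a $2$-point of $D$. So assume there exist disjoint open sets $U_1,U_2$ of $D$ with $p\in\cl[D]{U_1}\cap\cl[D]{U_2}$, and aim to witness that $p$ is a $2$-point of $X$.

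Since $U_i$ is open in the subspace $D$, pick open sets $V_i\subseteq X$ with $U_i=V_i\cap D$. The key step is to promote the disjointness of the $U_i$ inside $D$ to disjointness of the $V_i$ inside $X$: the set $V_1\cap V_2$ is open in $X$, and it satisfies $(V_1\cap V_2)\cap D=U_1\cap U_2=\emptyset$. Density of $D$ in $X$ forces $V_1\cap V_2=\emptyset$. Next, since $U_i\subseteq V_i$, we have
\[
p\in\cl[D]{U_i}\subseteq\cl[X]{U_i}\subseteq\cl[X]{V_i},
\]
so $p\in\cl[X]{V_1}\cap\cl[X]{V_2}$ with $V_1,V_2$ disjoint open in $X$. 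This exhibits $p$ as a $2$-point of $X$, contradicting the hypothesis that $X$ is extremally disconnected at $p$.

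There is no real obstacle here; the only content is the density argument turning $U_1\cap U_2=\emptyset$ into $V_1\cap V_2=\emptyset$, and the trivial fact that closures taken in $D$ are contained in closures taken in $X$. The argument does not use anything beyond the definitions, and in particular does not require $X$ or $D$ to be Hausdorff or regular.
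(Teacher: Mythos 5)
Your argument is correct: the reduction to $2$-points, the use of density to upgrade $U_1\cap U_2=\emptyset$ to $V_1\cap V_2=\emptyset$, and the inclusion of closures taken in $D$ into closures taken in $X$ are exactly what is needed. The paper states this lemma as folklore and omits the proof, and yours is the standard argument it has in mind.
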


\begin{thm}\label{loccomp}
Let $X$ be a metrizable space. Then $\R{\cl{LX}}$, if non-empty, contains a dense set of $\cont$-points and $\R{NX}$ is extremally disconnected.
\end{thm}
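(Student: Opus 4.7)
The plan is to treat the two statements separately, after reducing to more tractable spaces via Lemma \ref{opendense}. Since $LX$ is open and dense in the normal space $\cl{LX}$, we have $\R{\cl{LX}}\cong\R{LX}$, so for the first assertion it suffices to show $\R{LX}$ has a dense set of $\cont$-points. Given a non-empty basic open $W=\Ex(U)\cap\R{LX}$, non-emptiness forces $\cl[LX]{U}$ to be non-compact. Using that $LX$ is locally compact metrizable, I would build inside $U$ a discrete (in $LX$) family $\{K_n:n<\omega\}$ of compact sets, each the closure of a non-empty open neighborhood of a point from a closed discrete infinite sequence in $U$, so that $Y=\bigcup_n K_n$ is a regular closed, $\sigma$-compact, locally compact, non-compact subspace of $LX$ contained in $U$. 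By Proposition \ref{subspaces}, $\R{Y}$ is clopen in $\R{LX}$, and since $Y$ and $LX\setminus U$ are disjoint closed subsets of the normal space $LX$, normality forces $\R{Y}\subset\Ex(U)\cap\R{LX}=W$.

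Now $Y$ is $\sigma$-compact (hence realcompact), locally compact, and non-compact, so Proposition \ref{cpointsvd} gives, for each $p\in Y^*$, a pairwise disjoint family $\{W_\alpha:\alpha<\cont\}$ of open subsets of $Y^*$ with $p\in\cl[Y^*]{W_\alpha}$. To lift this to $\R{Y}$: for any open neighborhood $O$ of $p$ in $\beta Y$, realcompactness of $Y$ combined with Urysohn's lemma in $\beta Y$ produces a non-empty zero-set contained in $O\cap W_\alpha\cap Y^*$, and Proposition \ref{structuremetrizable}(b) then forces this zero-set to meet $\R{Y}$; thus $p\in\cl[\R{Y}]{W_\alpha\cap\R{Y}}$ for each $\alpha$, proving $p$ is a $\cont$-point of $\R{Y}$. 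Since $\R{Y}$ is clopen in $\R{LX}$, such $p$ is also a $\cont$-point of $\R{LX}$ lying in $W$.

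For the extremal disconnectedness of $\R{NX}$, assume for contradiction that some $p\in\R{NX}$ is a $2$-point, witnessed by disjoint open $U_1,U_2\subset\R{NX}$. Writing $U_i=\bigcup_{j\in J_i}(\Ex(V_{i,j})\cap\R{NX})$ with $V_{i,j}$ open in $NX$, the key observation is that every intersection $V_{1,j_1}\cap V_{2,j_2}$ must be empty: otherwise it would be a non-empty open subset of the nowhere locally compact space $NX$, so $\cl[NX]{V_{1,j_1}\cap V_{2,j_2}}$ would be non-compact, hence $\Ex(V_{1,j_1}\cap V_{2,j_2})\cap\R{NX}\neq\emptyset$ by Proposition \ref{structuremetrizable}(b) and (c), contradicting $U_1\cap U_2=\emptyset$. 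Setting $V_i=\bigcup_j V_{i,j}$ we get $V_1\cap V_2=\emptyset$ and $p\in\cl[\beta NX]{V_i}$ for $i=1,2$. The set $A=\cl[NX]{V_1}\cap\cl[NX]{V_2}$ is contained in $\bd[NX]{V_1}$ and is therefore nowhere dense in $NX$. The classical identity
\[ \cl[\beta NX]{C_1\cap C_2}=\cl[\beta NX]{C_1}\cap\cl[\beta NX]{C_2} \]
for closed subsets $C_1,C_2$ of a normal space (a direct consequence of Urysohn's lemma in $\beta NX$), applied to $C_i=\cl[NX]{V_i}$, yields $p\in\cl[\beta NX]{A}$, contradicting $p\in\R{NX}$.

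The main technical obstacle is in the first part: carefully constructing the regular closed $\sigma$-compact subspace $Y\subset U$ and then transferring the $\cont$-point property from $Y^*$ up to $\R{Y}\subset\R{LX}$ via realcompactness and Proposition \ref{structuremetrizable}(b). The extremal disconnectedness half is comparatively short and rests on the closure-of-intersection identity for normal spaces together with the density of $\R{NX}$ in $(NX)^*$.
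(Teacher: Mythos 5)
Your proof is correct and, for the $\cont$-points, follows essentially the same route as the paper: isolate, inside a prescribed $\Ex(U)$, a regular closed, locally compact, realcompact, non-compact piece built as a discrete union of compact sets, apply Proposition \ref{cpointsvd} to it, and transfer the $\cont$-point property to the remote points via Proposition \ref{subspaces}. The one real difference there is that you first pass from $\cl{LX}$ to its dense open subset $LX$ via Lemma \ref{opendense}; the paper works in $\cl{LX}$ directly and quietly needs its chosen discrete sequence to consist of points of local compactness, so your reduction actually tidies up a small implicit step. Your transfer of the $\cont$-point property from $Y^\ast$ to $\R{Y}$ (zero-sets plus part $(b)$ of Proposition \ref{structuremetrizable}) works but is heavier than necessary: density of $\R{Y}$ in $Y^\ast$ (part $(c)$) already gives $p\in\cl[Y^\ast]{W_\alpha\cap\R{Y}}$ for each $\alpha$. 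For the extremal disconnectedness of $\R{NX}$ the two arguments genuinely diverge: the paper cites van Douwen's theorem that $\beta(NX)$ is extremally disconnected at every remote point and then combines it with Lemma \ref{denseed} and the density of $\R{NX}$ in $\beta(NX)$, whereas you give a direct, self-contained argument --- nowhere local compactness forces the underlying open subsets of $NX$ to be disjoint, and the normal-space identity $\cl[\beta NX]{C_1}\cap\cl[\beta NX]{C_2}=\cl[\beta NX]{C_1\cap C_2}$ then traps the supposed $2$-point in the closure of the nowhere dense set $\cl[NX]{V_1}\cap\cl[NX]{V_2}$. Your version is in effect an inlined proof of the cited result of van Douwen; it buys self-containment at the cost of length, and both are correct.
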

\begin{proof}
Let us start with $NX$. By \cite[Corollary 5.2]{vd51}, $\beta(NX)$ is extremally disconnected at each point of $\R{NX}$. Notice that $(NX)\sp\ast$ is dense in $\beta(NX)$ so by $(c)$ in Proposition \ref{structuremetrizable}, $\R{NX}$ is dense in $\beta(NX)$.  It follows from Lemma \ref{denseed} that $\R{NX}$ is extremally disconnected.

Let $Y=\cl{LX}$ and fix some metric for $Y$. Assume that $Y$ is not compact so that $\R{Y}\neq\emptyset$. Let $\Ex(U)$ be a basic open subset of $\beta Y$ that intersects $Y\sp\ast$.

Since $\cl[Y]{U}$ is not compact, we may find a closed discrete infinite set $\{y_n:n<\omega\}\subset U$, just as in the proof of $(a)$ in Proposition \ref{structuremetrizable}. For each $n<\omega$, let $U_n$ be an open subset of $Y$ such that $y_n\in U_n$, $\cl[Y]{U_n}\subset U$ and $\cl[Y]{U_n}$ is a compact subset of diameter at most $\frac{1}{n+1}$.

Let $A=\bigcup\{\cl[Y]{U_n}:n<\omega\}$. Then $A$ is a regular closed subset of $Y$ and since $Y$ is a normal space, $\cl[\beta Y]{A}\subset \Ex(U)$. By Proposition \ref{subspaces} we have that $\R{A}$ is a clopen subspace of $\R{Y}\cap \Ex(U)$. Notice that $A$ is realcompact (each $\cl{U_n}$ has countable weight) so by Proposition \ref{cpointsvd} each point of $\R{A}$ is a $\cont$-point of $\cl[Y]{A}-A(=A\sp\ast)$. Using $(c)$ in Proposition \ref{structuremetrizable} it follows that each point of $\R{A}$ is a $\cont$-point of $\R{A}$. Thus, any point of $\R{A}$ is a $\cont$-point of $\R{Y}$ contained in $\Ex(U)$.

\end{proof}

The following follows immediately from Lemma \ref{opendense}, Lemma \ref{loccomplemma} and Theorem \ref{loccomp}.

\begin{coro}\label{coroloccomp}
Let $X$ and $Y$ be metrizable spaces. If $h:\R{X}\to\R{Y}$ is a homeomorphism then $h[\R{\cl{LX}}]=\R{\cl{LY}}$ and $h[\R{NX}]=\R{NY}$.
\end{coro}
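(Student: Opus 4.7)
The plan is to combine the clopen splitting given by Lemma \ref{loccomplemma} with the two distinguishing topological properties established in Theorem \ref{loccomp}. First, applying Lemma \ref{loccomplemma} to both spaces yields disjoint clopen decompositions
\[
\R{X}=\R{\cl{LX}}\oplus\R{\cl{NX}},\qquad \R{Y}=\R{\cl{LY}}\oplus\R{\cl{NY}}.
\]
Since $NX$ is open and dense in the (metrizable, hence normal) space $\cl{NX}$, Lemma \ref{opendense} identifies $\R{NX}$ with $\R{\cl{NX}}$ (and similarly for $Y$); Theorem \ref{loccomp} then tells us that the clopen pieces $\R{\cl{NX}}$ and $\R{\cl{NY}}$ are extremally disconnected, while $\R{\cl{LX}}$ and $\R{\cl{LY}}$ carry dense sets of $\cont$-points whenever they are nonempty.

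The key elementary observation is that in any extremally disconnected space $Z$ the closure of an open set is clopen, so two disjoint open sets have disjoint closures; consequently $Z$ admits no $2$-point, and a fortiori no $\cont$-point. Because each piece in our decompositions is clopen in the corresponding $\R{X}$ or $\R{Y}$, being a $\cont$-point of a piece and being a $\cont$-point of the ambient space are the same thing, and the property transfers unchanged across the homeomorphism $h$.

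With this dichotomy in hand the corollary is immediate. Set $W:=h[\R{\cl{LX}}]\cap\R{\cl{NY}}$ and suppose for contradiction $W\neq\emptyset$. Then $h^{-1}[W]$ is a nonempty open subset of $\R{\cl{LX}}$, so by density it contains a $\cont$-point $p$; the image $h(p)\in W$ is then a $\cont$-point of $\R{Y}$ lying inside the clopen, extremally disconnected piece $\R{\cl{NY}}$. Restricting any two of the disjoint open witnesses around $h(p)$ to the clopen set $\R{\cl{NY}}$ keeps them nonempty (since their closures must meet $h(p)\in\R{\cl{NY}}$, which is open) and pairwise disjoint, exhibiting $h(p)$ as even a $2$-point of the extremally disconnected space $\R{\cl{NY}}$, a contradiction. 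Hence $h[\R{\cl{LX}}]\subset\R{\cl{LY}}$; applying the same argument to $h^{-1}$ yields equality, and the second identity follows by complementarity together with the identifications $\R{NX}\cong\R{\cl{NX}}$ and $\R{NY}\cong\R{\cl{NY}}$ from Lemma \ref{opendense}.

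The only genuine point is recording the elementary incompatibility between ``dense $\cont$-points'' and ``extremal disconnectedness'' on nonempty open sets; once this is noted, everything else is bookkeeping with the lemmas already in place.
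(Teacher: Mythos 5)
Your argument is correct and is exactly the route the paper intends: the paper states that the corollary ``follows immediately'' from Lemma \ref{opendense}, Lemma \ref{loccomplemma} and Theorem \ref{loccomp}, and your write-up simply supplies the (accurate) bookkeeping --- that an extremally disconnected space has no $2$-point, hence no $\cont$-point, while $\cont$-points of a clopen piece are $\cont$-points of the whole space, so $h$ must respect the two decompositions. Nothing to add.
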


Results for $LX$ were mentioned in Theorems \ref{thmwoods} and \ref{thmgates}.  We will now direct our efforts towards nowhere locally compact spaces (that is, spaces where $X=NX$). If $X$ is nowhere locally compact (and metrizable) and $\R{X}$ is homeomorphic to $\R{Y}$ then it follows from Corollary \ref{coroloccomp} that $\cl{LX}$ is compact. Since $\R{Y}=\R{NY}$ in this case, we may restrict to the case when both $X$ and $Y$ are nowhere locally compact.

\section{Completely Metrizable Spaces}\label{complmetr}

We would like to give a classification of the set of remote points of nowhere locally compact, completely metrizable spaces in the spirit of Theorem \ref{thmwoods}. Recall that a metrizable space is completely metrizable if and only if it is \v Cech-complete (\cite[Theorem 4.3.26]{eng}). 

We shall start by characterizing spaces coabsolute with the Baire space $\baire{\kappa}$, where $\kappa$ is an infinite cardinal, see Proposition \ref{coabsolute}. The proof of the following Lemma is easy.

\begin{lemma}\label{lemmacechcomplete}
Let $f:X\to Y$ be an irreducible and perfect continuous function between Tychonoff spaces. Then
\begin{itemize}
\item[(a)] $X$ is a \v Cech-complete space if and only if $Y$ is and
\item[(b)] $X$ is nowhere locally compact if and only if $Y$ is.
\end{itemize}
\end{lemma}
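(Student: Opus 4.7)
My plan is to reduce both claims to elementary manipulations of the extension $\beta f:\beta X\to\beta Y$. The key identity I want to use is that since $f$ is perfect we have $\beta f^{\leftarrow}[Y]=X$, equivalently $\beta f[\beta X\setminus X]\subset\beta Y\setminus Y$; and since $f$ is surjective with $Y$ dense in $\beta Y$, $\beta f$ is also onto $\beta Y$. These combine to give the cleaner equality $\beta f[\beta X\setminus X]=\beta Y\setminus Y$: any $y\in\beta Y\setminus Y$ has some preimage in $\beta X$, and it cannot lie in $X$ because $\beta f[X]=Y$.

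For $(a)$ I would invoke the standard characterization that a Tychonoff space is \v{C}ech-complete iff it is a $G_\delta$ in its \v{C}ech-Stone compactification. If $Y=\bigcap_{n<\omega}V_n$ with each $V_n$ open in $\beta Y$, then $X=\beta f^{\leftarrow}[Y]=\bigcap_{n<\omega}\beta f^{\leftarrow}[V_n]$ is $G_\delta$ in $\beta X$. Conversely, if $X=\bigcap_{n<\omega}U_n$ with each $U_n$ open in $\beta X$, then $\beta X\setminus X=\bigcup_{n<\omega}(\beta X\setminus U_n)$ is a countable union of compact sets; applying the continuous map $\beta f$ and the identity above, $\beta Y\setminus Y=\bigcup_{n<\omega}\beta f[\beta X\setminus U_n]$ is an $F_\sigma$ in $\beta Y$, so $Y$ is $G_\delta$ in $\beta Y$.

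For $(b)$ I would use the characterization that a space is nowhere locally compact iff no non-empty open subset has compact closure. One direction is very short from perfectness: if $V\subset Y$ is non-empty open with $\cl[Y]{V}$ compact, then $f^{\leftarrow}[V]$ is a non-empty open subset of $X$ whose closure is contained in $f^{\leftarrow}[\cl[Y]{V}]$, a compact set, contradicting that $X$ is nowhere locally compact. The other direction is where irreducibility enters: given a non-empty open $U\subset X$ with $\cl{U}$ compact, set $V=Y\setminus f[X\setminus U]$; irreducibility of $f$ forces $V$ to be non-empty open (otherwise $f[X\setminus U]=Y$ with $X\setminus U$ a proper closed subset of $X$), and since $f$ is onto, any $y\in V$ is the image of some point in $U$, so $V\subset f[U]$. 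Then $\cl[Y]{V}\subset f[\cl{U}]$ is compact, contradicting that $Y$ is nowhere locally compact.

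I do not expect any real obstacle; both halves are dictionary work once the identity $\beta f[\beta X\setminus X]=\beta Y\setminus Y$ is in hand. The only mildly delicate step is the verification that $V\subset f[U]$ in the last argument, which uses nothing beyond surjectivity of $f$.
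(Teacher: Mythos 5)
Your proof is correct. The paper itself gives no argument (it dismisses the lemma with ``The proof of the following Lemma is easy''), and what you supply is exactly the standard route the authors surely had in mind: the Henriksen--Isbell identity $\beta f^{\leftarrow}[Y]=X$ for perfect maps handles \v Cech-completeness in both directions, and the small image $f^{\sharp}[U]=Y\setminus f[X\setminus U]$ is precisely where irreducibility is needed for part (b).
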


Recall that for a space $X$, $c(X)$ denotes its cellularity. We will say that $X$ is \emph{of uniform cellularity} ($\kappa$) if $c(X)=c(U)(=\kappa)$ for each non-empty open subset $U\subset X$. We are interested in spaces of uniform cellularity because $\baire{\kappa}$ is such a space and this property is preserved as the following result shows. If $f:X\to Y$ is a function and $A\subset X$, let $f\sp\sharp[A]=Y-f[X-A]$.

\begin{lemma}\label{lemmaunifcell}
$\empty$\vskip11pt
\begin{itemize}
\item[$(i)$] Let $f:Y\to X$ be an irreducible continuous function. Then $c(X)=c(Y)$ and $X$ is of uniform cellularity if and only if $Y$ is.
\item[$(ii)$] Let $X$ be a space and $D\subset X$ a dense subset. Then $c(X)=c(D)$ and $X$ is of uniform cellularity if and only if $D$ is.
\end{itemize}
\end{lemma}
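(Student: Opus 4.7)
The proof splits naturally along the two items. For (i), my plan is to exploit two canonical transfers of open sets under an irreducible map $f:Y\to X$: the push-forward $f\sp\sharp[V]=X\setminus f[Y\setminus V]$, which is open (because $f$ is closed) and non-empty whenever $V$ is non-empty (by irreducibility), and the ordinary preimage $f\sp\leftarrow[W]$, which is open by continuity and non-empty by surjectivity. The two compatibility facts I would record are that both $V\mapsto f\sp\sharp[V]$ and $W\mapsto f\sp\leftarrow[W]$ send pairwise disjoint families of non-empty open sets to pairwise disjoint families of non-empty open sets of the same cardinality. The first uses that a point $x\in f\sp\sharp[U_1]\cap f\sp\sharp[U_2]$ would force $\emptyset\neq f\sp\leftarrow(x)\subset U_1\cap U_2$; the second is automatic. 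Running these transfers in both directions gives $c(X)=c(Y)$ immediately.

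For the uniform cellularity statement in (i), I would refine the same transfers using the inclusions $f\sp\sharp[U]\subset f\sp\sharp[V]$ whenever $U\subset V$, together with $f\sp\leftarrow[f\sp\sharp[V]]\subset V$ and $f\sp\sharp[f\sp\leftarrow[W]]\subset W$ (the latter from $f\sp\sharp[A]\subset f[A]$ and surjectivity). Applying the previous argument relative to any fixed non-empty open $V\subset Y$ (respectively $W\subset X$) yields $c(V)=c(f\sp\sharp[V])$ and $c(W)=c(f\sp\leftarrow[W])$. Combining these equalities, uniform cellularity with value $\kappa$ on one side transfers to uniform cellularity with value $\kappa$ on the other.

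For (ii), the plan is the classical dense-subspace argument. The inequality $c(X)\leq c(D)$ comes from intersecting a disjoint open family in $X$ with $D$, using density to keep each piece non-empty. For $c(D)\leq c(X)$, lift each non-empty open $V\subset D$ to a non-empty open $U\subset X$ with $U\cap D=V$ (take $U$ to be the union over $x\in V$ of $X$-open neighbourhoods $U_x$ with $U_x\cap D\subset V$); disjointness of the lifts $U_\alpha$ then follows because a non-empty intersection $U_\alpha\cap U_\beta$ would meet $D$ by density, contradicting $V_\alpha\cap V_\beta=\emptyset$. The uniform cellularity statement drops out by applying this equality to every pair $(U,U\cap D)$ with $U$ non-empty open in $X$, since $U\cap D$ is dense in $U$.

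The only real technical point is the bookkeeping with $f\sp\sharp$ in part (i), verifying the three set-theoretic identities above from the definitions of ``irreducible'' and ``closed''; everything else is immediate from continuity, surjectivity, and density.
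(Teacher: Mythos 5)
Your proposal is correct and follows essentially the same route as the paper: the paper also transfers disjoint open families via $f\sp\sharp$ and $f\sp\leftarrow$, using the inclusion $f\sp\leftarrow[f\sp\sharp[U]]\subset U$ (phrased there as restricting $f$ to the irreducible map $f\sp\leftarrow[f\sp\sharp[U]]\to f\sp\sharp[U]$) to sandwich $c(U)$ between copies of $c(X)$, and it dismisses $(ii)$ as straightforward. Your version merely makes the set-theoretic identities behind $f\sp\sharp$ explicit and fills in the standard dense-subspace lifting for $(ii)$, both of which are fine.
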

\begin{proof}
We start with $(i)$. That $c(X)=c(Y)$ is easy to prove. Assume that $X$ is of uniform cellularity and let $U\subset Y$ be a non-empty open subset. Let $V=f\sp\sharp[U]$. Then it is easy to see that $f\restriction_{f\sp\leftarrow[V]}:f\sp\leftarrow[V]\to V$ is an irreducible and continuous function. Thus, we already know that $c(V)=c(f\sp\leftarrow[V])$. Since $f\sp\leftarrow[V]\subset U$ we obtain that $c(X)=c(V)=c(f\sp\leftarrow[V])\leq c(U)\leq c(X)$ so $c(U)=c(X)$. The rest of the argument for $(i)$ is similar and the proof of $(ii)$ is straightforward.
\end{proof}

\begin{propo}\label{coabsolute}
Let $X$ be a metrizable space and $\kappa$ an infinite cardinal. Then $X$ is coabsolute with $\baire{\kappa}$ if and only if $X$ is a nowhere locally compact, completely metrizable space of uniform cellularity $\kappa$.
\end{propo}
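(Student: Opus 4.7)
The forward direction is property-chasing through the common absolute. The space $\baire{\kappa}$ is completely metrizable (hence \v Cech-complete), nowhere locally compact, and of uniform cellularity $\kappa$, since every basic clopen $[s]$ is itself a copy of $\baire{\kappa}$. If $X$ is coabsolute with $\baire{\kappa}$, fix an extremally disconnected $Z$ with perfect irreducible continuous surjections $k_X\colon Z\to X$ and $k_Y\colon Z\to\baire{\kappa}$. Lemma \ref{lemmacechcomplete} and Lemma \ref{lemmaunifcell}$(i)$, applied first to $k_Y$ and then to $k_X$, propagate \v Cech-completeness, nowhere local compactness and uniform cellularity $\kappa$ from $\baire{\kappa}$ through $Z$ to $X$; combined with the hypothesis that $X$ is metrizable, \v Cech-completeness becomes complete metrizability.

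For the backward direction, Proposition \ref{dimension} produces a strongly $0$-dimensional metrizable space $Y$ coabsolute with $X$. The same two lemmas applied along the legs of the common absolute of $X$ and $Y$ transfer all three hypotheses from $X$ to $Y$. Since coabsoluteness is transitive, it suffices to establish that $Y$ is coabsolute with $\baire{\kappa}$; I plan to show the stronger statement $Y\cong\baire{\kappa}$.

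Fix a complete metric $d$ on $Y$. The plan is to construct recursively non-empty clopen subsets $\{U_s:s\in\kappa^{<\omega}\}$ with $U_\emptyset=Y$, with $\{U_{s\frown\alpha}:\alpha<\kappa\}$ a clopen partition of $U_s$ for each $s$, and with $\mathrm{diam}(U_s)<2^{-|s|}$ for $|s|\ge 1$. Once this tree is in hand, the Cantor intersection theorem in $(Y,d)$ makes $\bigcap_n U_{\alpha\restriction n}$ a singleton for every $\alpha\in\baire{\kappa}$, so the map $f\colon Y\to\baire{\kappa}$ sending $y$ to its unique branch is a continuous bijection satisfying $f[U_s]=[s]$, hence open, hence a homeomorphism.

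The hard part is the recursion step: splitting a non-empty clopen $P\subseteq Y$ (which inherits strong $0$-dimensionality, complete metrizability, and uniform cellularity $\kappa$) into \emph{exactly} $\kappa$ non-empty clopens of diameter $<\varepsilon$. A $\sigma$-discrete clopen base of $P$ of size $\kappa$ (Bing--Nagata--Smirnov, specialised to $0$-dimensional metric spaces) combined with the standard $D_n=A_n\setminus\bigcup_{m<n}A_m$ trick yields a clopen partition of $P$ of size $\le\kappa$ with pieces of diameter $<\varepsilon$. If this partition falls short of cardinality $\kappa$, I would isolate a piece of full cellularity $\kappa$ (which exists since for regular $\kappa$ a disjoint union of weight-less-than-$\kappa$ clopens has weight less than $\kappa$) and split it further, iterating transfinitely: at each successor stage uniform cellularity guarantees that a further non-empty clopen disjoint from the pieces chosen so far is available, while at limit stages one passes to the common refinement of the partitions built up to that point. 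Arranging this so that the final partition has exactly $\kappa$ pieces, and in particular dealing with the singular-$\kappa$ case where the regularity trick fails, is the technical crux I expect to occupy the majority of the argument.
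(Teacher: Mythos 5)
Your overall skeleton coincides with the paper's: the forward direction is exactly the paper's appeal to Lemmas \ref{lemmacechcomplete} and \ref{lemmaunifcell}, and the backward direction begins, as in the paper, by invoking Proposition \ref{dimension} to replace $X$ with a coabsolute strongly $0$-dimensional space $Y$ and transferring the three hypotheses to $Y$. Where you diverge is that the paper then simply cites the Alexandroff--Urysohn and Stone characterization of $\baire{\kappa}$ (a completely metrizable, nowhere locally compact, strongly $0$-dimensional space all of whose non-empty open subsets have weight $\kappa$ is homeomorphic to $\baire{\kappa}$), whereas you attempt to reprove that characterization via a Cantor-scheme of clopen partitions. (Note in passing that you should say a word about why uniform cellularity $\kappa$ gives you a base of size $\kappa$ on every clopen piece; this is fine because $c=w$ for metrizable spaces, but it is used silently.)

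The reproof has a genuine gap: nowhere local compactness is never used anywhere in your construction, yet for $\kappa=\omega$ it is indispensable and your splitting step fails without it. The Cantor set $\cs$ is strongly $0$-dimensional, completely metrizable and of uniform cellularity $\omega$, but a compact clopen set admits \emph{no} partition into infinitely many non-empty clopen pieces, so the recursion step ``split $P$ into exactly $\kappa$ clopen pieces'' is impossible there; the correct argument must use the non-compactness of every clopen piece (which is precisely what nowhere local compactness supplies, via a closed discrete infinite set that can be separated by a discrete clopen family) to manufacture an infinite clopen partition before worrying about reaching cardinality exactly $\kappa$. A secondary but real problem is your limit-stage device: the ``common refinement'' of an increasing chain of clopen partitions consists of intersections of decreasing chains of clopen sets, which are closed $G_\delta$'s and in general not open (already in $\cs$, refining $[0^n]$ into $[0^{n+1}]$ and $[0^n1]$ leaves the non-open singleton $\{0^\infty\}$ in the common refinement), so the transfinite padding scheme as described does not produce a clopen partition. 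Together with the singular-$\kappa$ case you explicitly leave open, the identification $Y\cong\baire{\kappa}$ is not established; citing \cite{alexury} and \cite{stonenonsep} as the paper does, or supplying the non-compactness argument above, is needed to close the proof.
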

\begin{proof}
If $X$ is coabsolute with $\baire{\kappa}$ then the result follows from Lemmas \ref{lemmacechcomplete} and \ref{lemmaunifcell}. Now assume that $X$ has the properties given in the Proposition. By Proposition \ref{dimension} we may assume that $X$ is coabsolute with a strongly $0$-dimensional metrizable space. In \cite{alexury} and \cite{stonenonsep} it is proved that a completely metrizable, nowhere locally compact, strongly $0$-dimensional space such that $w(U)=\kappa$ for each non-empty open subset $U$ is homeomorphic to $\baire{\kappa}$ (see \cite[6.2.A and 7.2.G]{eng}). The result now follows.
\end{proof}

Notice that in Proposition \ref{coabsolute}, nowhere locally compact can be omitted when $\kappa>\omega$ and of uniform cellularity $\omega$ simply means being separable.

\begin{thm}\label{remotebaire}
Let $X$ be a nowhere locally compact, completely metrizable space and $\kappa>\omega$. Then $\R{X}$ is homeomorphic to $\R{\baire{\kappa}}$ if and only if $X$ is of uniform cellularity $\kappa$.
\end{thm}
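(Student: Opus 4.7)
The forward direction is immediate from the tools already developed: if $X$ is of uniform cellularity $\kappa$, then Proposition \ref{coabsolute} says $X$ is coabsolute with $\baire{\kappa}$, and Corollary \ref{coroabsolute} then yields $\R{X}\cong\R{\baire{\kappa}}$.

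For the converse, the plan is to show that the property ``of uniform cellularity $\kappa$'' transfers faithfully between a nowhere locally compact metrizable space and its set of remote points, so that the homeomorphism $\R{X}\cong\R{\baire{\kappa}}$ can be pushed all the way back to $X$. The mechanism is Lemma \ref{lemmaunifcell}$(ii)$: a dense subset and the ambient space share both the property and the specific cardinal. I would apply this along the chain of dense subsets inside $\beta X$, namely $X\subset\beta X$, $X\sp\ast\subset\beta X$, and $\R{X}\subset X\sp\ast$. The first inclusion is dense by construction of $\beta X$, the third by Proposition \ref{structuremetrizable}$(c)$, and the middle uses the hypothesis that $X$ is nowhere locally compact: if a non-empty open $W\subset\beta X$ were disjoint from $X\sp\ast$ it would lie in $X$, and any $p\in W$ would have a $\beta X$-neighborhood $V$ with $\cl[\beta X]{V}\subset W\subset X$ compact, making $X$ locally compact at $p$. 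Iterating Lemma \ref{lemmaunifcell}$(ii)$ along this chain gives the equivalence: $X$ is of uniform cellularity $\kappa$ if and only if $\R{X}$ is.

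Now apply the identical chain to $\baire{\kappa}$, which is nowhere locally compact, completely metrizable, and visibly of uniform cellularity $\kappa$ (every basic open subset is again a copy of $\baire{\kappa}$). This shows that $\R{\baire{\kappa}}$ is of uniform cellularity $\kappa$. Uniform cellularity being a topological invariant, the hypothesized homeomorphism $\R{X}\cong\R{\baire{\kappa}}$ transfers the property to $\R{X}$, and the chain above then forces $X$ itself to be of uniform cellularity $\kappa$, finishing the converse.

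The only mildly delicate point in the argument is verifying that $X\sp\ast$ is dense in $\beta X$; I expect this to be the main (and essentially the only) obstacle, but as sketched it is a short direct consequence of the nowhere local compactness of $X$. Everything else in the proof is a formal application of Lemma \ref{lemmaunifcell}$(ii)$ together with the already-quoted density statement of Proposition \ref{structuremetrizable}$(c)$.
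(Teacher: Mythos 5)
Your proposal is correct and follows essentially the same route as the paper: the forward direction via Proposition \ref{coabsolute} and Corollary \ref{coroabsolute}, and the converse by transferring uniform cellularity along dense subsets of $\beta X$ and $\beta(\baire{\kappa})$ using Lemma \ref{lemmaunifcell}$(ii)$. The paper simply asserts that $X$ and $\R{X}$ are both dense in $\beta X$ without spelling out the density of $X\sp\ast$ in $\beta X$; your explicit verification of that point from nowhere local compactness is the only (welcome) addition.
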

\begin{proof}
If $X$ is of uniform cellularity $\kappa$, use Proposition \ref{coabsolute}. Now, assume that $\R{X}$ is homeomorphic to $\R{\baire{\kappa}}$. Both $X$ and $\R{X}$ are dense in $\beta X$. Also, $\R{\baire{\kappa}}$ and $\baire{\kappa}$ are dense in $\beta(\baire{\kappa})$. By Lemma \ref{lemmaunifcell} we obtain that $X$ is of uniform cellularity $\kappa$.
\end{proof}

Wondering if the hypothesis that $X$ is complete is necessary in Theorem \ref{remotebaire} we observe the following.

\begin{ex}\label{exnometrizable}
For each completely metrizable, realcompact and non-compact space $X$ there exists a Tychonoff space $Y$ that is neither \v Cech complete nor an $M$-space such that $\R{X}$ is homeomorphic to $\R{Y}$.
\end{ex}
\begin{exex}
Let $D=\{x_n:n<\omega\}$ be a countable closed discrete and infinite subset of $X$. Let $p\in\cl[\beta X]{D}-X$ and let $Y=X\cup\{p\}$ as a subspace of $\beta X$. Notice that $\beta X=\beta Y$. We now show that $Y$ is not \v Cech-complete and it is not an $M$-space.

Assume that $Y$ is \v Cech-complete and let $\{U_n:n<\omega\}$ be a family of open subsets of $\beta X$ whose intersection is $Y$. Thus, $\{U_n\cap X\sp\ast:n<\omega\}$ witnesses that $\{p\}$ is a $G_\delta$ set of $X\sp\ast$. But $X$ is realcompact so there exists a subset of type $G_\delta$ of $\beta X$ that contains $p$ and misses $X$. It easily follows that $\{p\}$ is a $G_\delta$ set of $\beta X$. But this contradicts $(b)$ in Proposition \ref{structuremetrizable}.

Now assume that $Y$ is an $M$-space, we will reach a contradiction. Let $\{\C_n:n<\omega\}$ be as in the definition of an $M$-space. Let $d$ be a metric for $X$. As in the proof of Proposition \ref{irrisperfect}, we consider a strictly increasing $\phi:\omega\to\omega$ as follows: let $\phi(n)$ be such that $x_{\phi(n)}\in\st{p,\C_n}$. For each $n<\omega$, let $y_n\in\st{p,\C_n}-D$ be such that $d(x_{\phi(n)},y_n)<\frac{1}{n+1}$. Then it can be proved that $\{y_n:n<\omega\}$ is a closed discrete subset of $X$. Since $X$ is normal, $\{y_n:n<\omega\}$ is also closed in $Y$. But this contradicts $(i)$ in the definition of an $M$-space.

Finally, $p\notin\R{X}$ because $D$ is nowhere dense. Thus, $\R{X}=\R{Y}$.
\end{exex}

However, since the space in Example \ref{exnometrizable} is not an $M$-space, we can ask the following more specific question.

\begin{ques}\label{quescomplete}
Let $X$ and $Y$ be metrizable spaces such that $\R{X}$ is homeomorphic to $\R{Y}$. If $X$ is completely metrizable, must $Y$ be also completely metrizable?
\end{ques}

We have obtained a characterization of spaces with remote points homeomorphic to the remote points of some specific completely metrizable spaces in Theorem \ref{remotebaire}. In an effort to classify them all we make the following definition.

\begin{defi}
Let $X$ be a space, let $\s$ be a set of infinite cardinals and $\phi$ a function with domain $\s$ such that for each $\kappa\in\s$, $\phi(\kappa)$ is a cardinal $\geq\kappa$. We will say that $X$ has \emph{cellular type} $(\s,\phi)$ if there exists a pairwise disjoint family of open subsets $\B=\{V(\kappa,\alpha):\kappa\in\s,\alpha<\phi(\kappa)\}$ of $X$ whose union is dense in $X$ and such that if $\kappa\in\s$ and $\alpha<\phi(\kappa)$ then $V(\kappa,\alpha)$ is of uniform cellularity $\kappa$. In this case we say that $\B$ is a witness to the cellular type of $X$.
\end{defi}

\begin{lemma}
Every crowded metrizable space has a cellular type.
\end{lemma}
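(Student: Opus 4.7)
The plan is to use Zorn's lemma to build a maximal pairwise disjoint family $\B_0$ of non-empty open subsets of $X$, each of uniform cellularity, and then to refine each $V\in\B_0$ into enough pairwise disjoint pieces so that the cardinality requirement $\phi(\kappa)\ge\kappa$ in the definition of cellular type is met.

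The key local lemma is that every non-empty open $U\subset X$ contains a non-empty open subset of uniform cellularity. Since $X$ is crowded and Hausdorff, iterated bisection of any non-empty open set produces arbitrarily many pairwise disjoint non-empty open subsets, so $c(V)\ge\omega$ for every non-empty open $V\subset U$. Thus $\{c(V):V\subset U\text{ non-empty open}\}$ is a non-empty class of infinite cardinals and has a least element, realized by some $V_0\subset U$; any non-empty open $W\subset V_0$ then satisfies $c(W)\le c(V_0)$ trivially and $c(W)\ge c(V_0)$ by minimality, so $V_0$ is of uniform cellularity. Zorn's lemma now produces $\B_0$, and maximality together with the lemma forces $\bigcup\B_0$ to be dense in $X$ (otherwise one could enlarge $\B_0$ inside $X\setminus\cl{\bigcup\B_0}$). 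Write $\kappa_V$ for the uniform cellularity of $V\in\B_0$ and set $\s=\{\kappa_V:V\in\B_0\}$, a set of infinite cardinals.

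The main obstacle is enforcing $\phi(\kappa)\ge\kappa$, since $\B_0$ itself may contain only one block of a given type (e.g., $\{X\}$ is maximal if $X$ is already of uniform cellularity). The plan to handle this is refinement. For each $V\in\B_0$, the metric space $V$ satisfies $c(V)=w(V)=\kappa_V$, and since weight in a metric space is always attained (by a discrete level of a $\sigma$-discrete base of the appropriate size), $V$ contains a pairwise disjoint family of $\kappa_V$ non-empty open subsets. Extending it by Zorn to a maximal pairwise disjoint family within $V$ yields $\C_V\subset V$ of size exactly $\kappa_V$ with $\bigcup\C_V$ dense in $V$; each element of $\C_V$, being open in $V$, inherits uniform cellularity $\kappa_V$ from $V$.

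Setting $\B=\bigcup\{\C_V:V\in\B_0\}$ produces a pairwise disjoint family of non-empty open subsets of $X$ whose union is dense in $X$. Defining $\phi(\kappa)$ to be the cardinality of $\{W\in\B:W$ is of uniform cellularity $\kappa\}$ gives $\phi(\kappa)\ge|\C_V|=\kappa$ for any $V\in\B_0$ with $\kappa_V=\kappa$, so enumerating each such subfamily as $\{V(\kappa,\alpha):\alpha<\phi(\kappa)\}$ exhibits the cellular type $(\s,\phi)$ of $X$.
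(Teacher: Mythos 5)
Your proof is correct and follows essentially the same route as the paper's: a Zorn-maximal pairwise disjoint family of uniform-cellularity open sets (with density forced by choosing, in any gap, an open piece of minimal cellularity), followed by padding each cardinal class up to size $\kappa$ using the fact that cellularity is attained in metrizable spaces; whether one refines every block or only the deficient classes is immaterial. The one soft spot is your justification of attainment via ``a discrete level of a $\sigma$-discrete base of the appropriate size'': when $c(V)=w(V)$ has countable cofinality no single discrete level need have full size, so you should either invoke the Erd\H{o}s--Tarski theorem for singular cardinals or simply cite the attainment result as the paper does (\cite[8.1(d)]{hodelhandbook}).
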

\begin{proof}
Let $X$ be a crowded metrizable space. By the Kuratowski-Zorn Lemma, find a maximal pairwise disjoint family $\U$ of open, non-empty subsets of $X$ of uniform cellularity. Notice that $\bigcup\U$ is dense, otherwise let $V$ be an open subset of $X-\cl{\bigcup\U}$ of minimal cellularity; $\U\cup\{V\}$ contradicts the maximality of $\U$. We define
$$
\s=\{\kappa:\kappa\textrm{ is a cardinal and there is }U\in\U\textrm{ such that }c(U)=\kappa\}.
$$
Clearly $\s$ is a set of infinite cardinals because $X$ is crowded. For each $\kappa\in\s$ let
$$
\U(\kappa)=\{U\in\U:c(U)=\kappa\}.
$$
We may assume that $|\U(\kappa)|\geq\kappa$ by the following argument. If the size of this family is smaller than $\kappa$, using the fact that the cellularity is attained in metrizable spaces (\cite[8.1(d)]{hodelhandbook}), replace $\U(\kappa)$ by a family of $\kappa$ pairwise disjoint open subsets of $\bigcup\U(\kappa)$ that is dense in $\bigcup\U(\kappa)$. We finally define $\phi$: for each $\kappa\in\s$ we let $\phi(\kappa)=|\U(\kappa)|$. Clearly $\U$ witnesses that $X$ has cellular type $(\s,\phi)$.
\end{proof}

\begin{lemma}
If a Tychonoff space $X$ has cellular types $(\s,\phi)$ and $(\mathcal{T},\psi)$, then $(\s,\phi)=(\mathcal{T},\psi)$.
\end{lemma}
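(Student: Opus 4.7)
Let $\B=\{V(\kappa,\alpha):\kappa\in\s,\alpha<\phi(\kappa)\}$ and $\C=\{W(\lambda,\beta):\lambda\in\mathcal{T},\beta<\psi(\lambda)\}$ be witnesses to the two cellular types. The plan is to extract $\s$ and $\phi$ directly from $X$ by slicing the witnesses according to cardinal strata and computing cellularities; the two witnesses will then be forced to agree stratum by stratum.

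First I would show $\s=\mathcal{T}$ and that pieces at different cardinals are automatically disjoint. Given $\kappa\in\s$, pick any $V(\kappa,\alpha)\in\B$; by density of $\bigcup\C$, there exist $\lambda\in\mathcal{T}$ and $\beta<\psi(\lambda)$ with $V(\kappa,\alpha)\cap W(\lambda,\beta)\neq\emptyset$. This intersection is a non-empty open subset of both $V(\kappa,\alpha)$ and $W(\lambda,\beta)$, hence by uniform cellularity has cellularity equal to $\kappa$ and to $\lambda$, forcing $\kappa=\lambda$. Thus $\s\subseteq\mathcal{T}$, and by symmetry $\s=\mathcal{T}$; moreover the same argument shows $V(\kappa,\alpha)\cap W(\lambda,\beta)=\emptyset$ whenever $\kappa\neq\lambda$. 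Now fix $\kappa\in\s=\mathcal{T}$ and set $U_\kappa=\bigcup\{V(\kappa,\alpha):\alpha<\phi(\kappa)\}$ and $W_\kappa=\bigcup\{W(\kappa,\beta):\beta<\psi(\kappa)\}$. Since $\bigcup\C$ is dense in $X$ and its trace inside $V(\kappa,\alpha)$ lies in $W_\kappa$ by the disjointness above, we obtain $V(\kappa,\alpha)\subseteq\cl{W_\kappa}$ for every $\alpha$, hence $U_\kappa\subseteq\cl{W_\kappa}$. By symmetry $\cl{U_\kappa}=\cl{W_\kappa}$, and Lemma \ref{lemmaunifcell}(ii) yields $c(U_\kappa)=c(\cl{U_\kappa})=c(\cl{W_\kappa})=c(W_\kappa)$.

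It remains to identify $c(U_\kappa)$ with $\phi(\kappa)$ (and analogously $c(W_\kappa)$ with $\psi(\kappa)$). The lower bound $c(U_\kappa)\geq\phi(\kappa)$ is immediate from the cellular family $\{V(\kappa,\alpha):\alpha<\phi(\kappa)\}$. For the upper bound, given any pairwise disjoint family $\mathcal{A}$ of non-empty open subsets of $U_\kappa$, for each $\alpha$ the trace $\{A\cap V(\kappa,\alpha):A\in\mathcal{A},\ A\cap V(\kappa,\alpha)\neq\emptyset\}$ is cellular in $V(\kappa,\alpha)$ and hence of size at most $\kappa$; counting pairs $(A,\alpha)$ with non-empty intersection gives $|\mathcal{A}|\leq\phi(\kappa)\cdot\kappa=\phi(\kappa)$ since $\phi(\kappa)\geq\kappa\geq\omega$. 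Combining, $\phi(\kappa)=c(U_\kappa)=c(W_\kappa)=\psi(\kappa)$, as desired. The only mildly delicate point is this last counting estimate, but uniform cellularity of each $V(\kappa,\alpha)$ supplies exactly the per-$\alpha$ cap the argument needs, so no real obstacle arises.
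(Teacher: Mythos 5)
Your proof is correct. The first half --- showing $\s=\mathcal{T}$ and that $V(\kappa,\alpha)\cap W(\lambda,\beta)\neq\emptyset$ forces $\kappa=\lambda$ via uniform cellularity --- is exactly the paper's argument. Where you diverge is in proving $\phi=\psi$: the paper argues by contradiction, assuming $\phi(\lambda)<\psi(\lambda)$, bounding $|J(\alpha)|=|\{\beta:V(\lambda,\alpha)\cap W(\lambda,\beta)\neq\emptyset\}|\leq\lambda$, and producing a $W(\lambda,\gamma)$ that misses every member of $\B$, contradicting the density of $\bigcup\B$. You instead compute the invariant directly: $\phi(\kappa)=c(U_\kappa)$ where $U_\kappa$ is the union of the $\kappa$-stratum of $\B$, and $\cl{U_\kappa}=\cl{W_\kappa}$ forces $c(U_\kappa)=c(W_\kappa)=\psi(\kappa)$. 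The underlying cardinal arithmetic is the same in both arguments ($\phi(\kappa)\cdot\kappa=\phi(\kappa)$ because $\phi(\kappa)\geq\kappa\geq\omega$), so neither is more elementary, but your packaging is symmetric, avoids the contradiction, and makes explicit that the cellular type is an intrinsic invariant of $X$: $\s$ is the set of cellularities realized by non-empty open sets of uniform cellularity, and $\phi(\kappa)$ is the cellularity of the closure of the union of any witnessing $\kappa$-stratum. The counting step you flag as delicate does go through: every $A\in\mathcal{A}$ meets at least one $V(\kappa,\alpha)$ since $A\subseteq U_\kappa$, and at most $\kappa$ members of $\mathcal{A}$ meet any fixed $V(\kappa,\alpha)$ because $c(V(\kappa,\alpha))=\kappa$, so $|\mathcal{A}|\leq\phi(\kappa)\cdot\kappa=\phi(\kappa)$.
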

\begin{proof}
Let $\{V(\kappa,\alpha):\kappa\in\s,\alpha<\phi(\kappa)\}$ witness type $(\s,\phi)$ and $\{W(\kappa,\alpha):\kappa\in\mathcal{T},\alpha<\psi(\kappa)\}$ witness type $(\mathcal{T},\psi)$. For each $\kappa\in\s$, $V(\kappa,0)$ is a non-empty open subset of $X$ so it must intersect some $W(\tau,\alpha)$, with $\tau\in\mathcal{T}$ and $\alpha<\psi(\tau)$. By the definition of uniform cellularity it follows that $\kappa=\tau$ so $\s\subset\mathcal{T}$. By a similar argument $\s=\mathcal{T}$. Notice that $V(\kappa,\alpha)\cap W(\tau,\beta)\neq\emptyset$ implies $\kappa=\tau$. Assume $\phi(\lambda)<\psi(\lambda)$ for some $\lambda\in\s$. For each $\alpha<\phi(\lambda)$ let $J(\alpha)=\{\beta<\psi(\lambda):V(\lambda,\alpha)\cap W(\lambda,\beta)\neq\emptyset\}$. Then $|J(\alpha)|\leq\lambda$ because $c(V(\lambda,\alpha))=\lambda$. Let $\gamma\in\psi(\lambda)-\bigcup\{J(\alpha):\alpha<\phi(\lambda)\}$, then it follows that $W(\lambda,\gamma)$ does not intersect any element of $\{V(\kappa,\alpha):\kappa\in\s,\alpha<\phi(\kappa)\}$, which contradicts the density of $\bigcup\{V(\kappa,\alpha):\kappa\in\s,\alpha<\phi(\kappa)\}$. This completes the proof.
\end{proof}

So at least metrizable spaces have a cellular type and cellular type is unique. Sometimes cellular type can be transfered from one space to another. For example we have the following easy transfer result.

\begin{lemma}
Let $X$ be any space and $D$ a dense subset of $X$. Then $X$ has cellular type $(\s,\phi)$ if and only if $D$ has cellular type $(\s,\phi)$.
\end{lemma}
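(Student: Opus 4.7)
The plan is to handle both directions via the transfer Lemma \ref{lemmaunifcell}$(ii)$, which tells us uniform cellularity is preserved by passing to a dense subset or to a superspace. The forward direction is essentially a restriction, while the reverse direction requires lifting a disjoint open family from $D$ back to $X$.

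For the ``only if'' direction, suppose $\B=\{V(\kappa,\alpha):\kappa\in\s,\alpha<\phi(\kappa)\}$ witnesses the cellular type of $X$. I would set $W(\kappa,\alpha)=V(\kappa,\alpha)\cap D$. These are pairwise disjoint open subsets of $D$; their union is $D\cap\bigcup\B$, which is dense in $D$ since $\bigcup\B$ is dense in $X$ and $D$ is dense in $X$. Moreover $V(\kappa,\alpha)\cap D$ is dense in $V(\kappa,\alpha)$, so by Lemma \ref{lemmaunifcell}$(ii)$, $W(\kappa,\alpha)$ inherits uniform cellularity $\kappa$ from $V(\kappa,\alpha)$.

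For the ``if'' direction, suppose $\{W(\kappa,\alpha)\}$ witnesses the cellular type of $D$. The main issue is that writing each $W(\kappa,\alpha)$ as $D\cap U$ for some $U$ open in $X$ does not give disjoint lifts. My proposed fix is to define
\[
V(\kappa,\alpha)=X\setminus\cl[X]{D\setminus W(\kappa,\alpha)}.
\]
Since $W(\kappa,\alpha)$ is open in $D$, $D\setminus W(\kappa,\alpha)$ is closed in $D$, so $V(\kappa,\alpha)\cap D=W(\kappa,\alpha)$; this also shows each $V(\kappa,\alpha)$ is non-empty (it contains any open $U\subseteq X$ with $U\cap D=W(\kappa,\alpha)$). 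Pairwise disjointness is the key point: if $x\in V(\kappa,\alpha)\cap V(\tau,\beta)$ for distinct pairs, then $x$ has a neighbourhood $N$ with $N\cap D\subseteq W(\kappa,\alpha)\cap W(\tau,\beta)=\emptyset$, contradicting the density of $D$. The union $\bigcup V(\kappa,\alpha)$ is dense in $X$ because any non-empty open $N\subseteq X$ meets $D$, hence meets some $W(\kappa_0,\alpha_0)\subseteq V(\kappa_0,\alpha_0)$. Finally, $W(\kappa,\alpha)=V(\kappa,\alpha)\cap D$ is dense in $V(\kappa,\alpha)$ and has uniform cellularity $\kappa$, so a second application of Lemma \ref{lemmaunifcell}$(ii)$ gives that $V(\kappa,\alpha)$ itself has uniform cellularity $\kappa$, completing the proof.

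The main obstacle, as indicated above, is producing a disjoint lift in the reverse direction, and the device $V(\kappa,\alpha):=X\setminus\cl[X]{D\setminus W(\kappa,\alpha)}$ is exactly tailored to force disjointness via the density of $D$.
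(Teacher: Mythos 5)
Your proof is correct. The paper states this lemma without proof (it is offered as an ``easy transfer result''), and your argument --- restricting the witness to $D$ in one direction, and in the other lifting each $W(\kappa,\alpha)$ to $V(\kappa,\alpha)=X\setminus\cl{D\setminus W(\kappa,\alpha)}$ to secure disjointness, with Lemma \ref{lemmaunifcell}$(ii)$ handling uniform cellularity both ways --- is exactly the intended kind of argument and has no gaps.
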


In the case of nowhere locally compact, completely metrizable spaces, cellular type can be transfered to the remote points. For this, nice witnesses are needed.

\begin{lemma}\label{celltypepibase}
Let $X$ be a regular space with cellular type $(\s,\phi)$. Then there exists a witness family $\B$ of the cellular type of $X$ such that any two different members of $\B$ have disjoint closures.
\end{lemma}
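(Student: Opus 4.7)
The plan is to refine the given witness family so that different members have disjoint closures, while keeping the same cellular type. Fix any witness $\B_0=\{V(\kappa,\alpha):\kappa\in\s,\alpha<\phi(\kappa)\}$. The key observation is that since the $V(\kappa,\alpha)$ are pairwise disjoint open sets, if we replace each $V(\kappa,\alpha)$ with a collection of open subsets whose $X$-closures are contained in $V(\kappa,\alpha)$, then any two pieces coming from distinct $V(\kappa,\alpha)$'s automatically have disjoint closures. Hence the problem reduces to working inside a single $V(\kappa,\alpha)$ at a time.

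Fix $\kappa\in\s$ and $\alpha<\phi(\kappa)$. By the Kuratowski--Zorn Lemma, let $\mathcal{F}(\kappa,\alpha)$ be a maximal family of non-empty open subsets $W\subset V(\kappa,\alpha)$ such that $\cl{W}\subset V(\kappa,\alpha)$ and the $X$-closures of any two distinct members are disjoint. I claim $\bigcup\mathcal{F}(\kappa,\alpha)$ is dense in $V(\kappa,\alpha)$. Otherwise the open set $O=V(\kappa,\alpha)\setminus\cl{\bigcup\mathcal{F}(\kappa,\alpha)}$ is non-empty; by regularity of $X$, pick a non-empty open $U$ with $\cl{U}\subset O$. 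Then $\cl{U}\subset V(\kappa,\alpha)$ and $\cl{U}\cap\cl{W}=\emptyset$ for every $W\in\mathcal{F}(\kappa,\alpha)$, contradicting maximality. Moreover, each $W\in\mathcal{F}(\kappa,\alpha)$ is a non-empty open subset of $V(\kappa,\alpha)$, so it inherits uniform cellularity $\kappa$, and $|\mathcal{F}(\kappa,\alpha)|\leq c(V(\kappa,\alpha))=\kappa$.

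Now set $\B=\bigcup\{\mathcal{F}(\kappa,\alpha):\kappa\in\s,\alpha<\phi(\kappa)\}$. Pieces from the same $\mathcal{F}(\kappa,\alpha)$ have disjoint closures by construction, and pieces from different $(\kappa,\alpha)$'s have disjoint closures by the first paragraph. Since $\bigcup\mathcal{F}(\kappa,\alpha)$ is dense in $V(\kappa,\alpha)$ for each $(\kappa,\alpha)$ and $\bigcup\B_0$ is dense in $X$, the union $\bigcup\B$ is dense in $X$. For each $\kappa\in\s$, the number of members of $\B$ of uniform cellularity $\kappa$ is $\sum_{\alpha<\phi(\kappa)}|\mathcal{F}(\kappa,\alpha)|$, which is at most $\phi(\kappa)\cdot\kappa=\phi(\kappa)$ (since $\phi(\kappa)\geq\kappa$ is infinite) and at least $\phi(\kappa)$ (since each summand is at least $1$), hence equal to $\phi(\kappa)$. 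Reindexing yields a witness $\B$ of cellular type $(\s,\phi)$ with pairwise disjoint closures.

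The only non-routine step is verifying that maximality forces the density of $\bigcup\mathcal{F}(\kappa,\alpha)$ in $V(\kappa,\alpha)$, for which regularity of $X$ is used in an essential (though standard) way to separate a new piece from $\cl{\bigcup\mathcal{F}(\kappa,\alpha)}$. Everything else is bookkeeping about cardinals and the elementary observation that pieces lying in disjoint $V(\kappa,\alpha)$'s, once their closures are trapped inside, cannot meet.
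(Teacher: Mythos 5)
Your proof is correct and follows essentially the same route as the paper's: inside each member of the original witness, take a maximal family of open sets whose closures are pairwise disjoint and contained in that member, then re-enumerate using $\phi(\kappa)\cdot\kappa=\phi(\kappa)$. You merely spell out the density (via regularity and maximality) and the cardinality bookkeeping that the paper leaves implicit.
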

\begin{proof}
Let $\{W(\kappa,\alpha):\kappa\in\s,\alpha<\phi(\kappa)\}$ witness the cellular type of $X$. For each $\kappa\in\s$ and $\alpha<\phi(\kappa)$ let $\B(\kappa,\alpha)$ be a maximal family of open subsets of $W(\kappa,\alpha)$ whose closures are pairwise disjoint and contained in $W(\kappa,\alpha)$. Clearly $|\B(\kappa,\alpha)|\leq\kappa$. Give an enumeration $\{V(\kappa,\alpha):\alpha<\phi(\kappa)\}$ of $\bigcup\{\B(\kappa,\alpha):\alpha<\phi(\kappa)\}$. Clearly $\{V(\kappa,\alpha):\kappa\in\s,\alpha<\phi(\kappa)\}$ is the witness we were looking for.
\end{proof}

\begin{thm}\label{thmcelltype}
Let $X$ be a nowhere locally compact and completely metrizable space of cellular type $(\s,\phi)$. Then there exists a family $\{V(\kappa,\alpha):\kappa\in\s,\alpha<\phi(\kappa)\}$ consisting of clopen subsets of $\R{X}$ that witnesses that $\R{X}$ has cellular type $(\s,\phi)$ and has the additional property that for each $\kappa\in\s$ and $\alpha<\phi(\kappa)$, $V(\kappa,\alpha)$ is homeomorphic to $\R{\baire{\kappa}}$.
\end{thm}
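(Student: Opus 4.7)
My plan is to take a nice witness of the cellular type of $X$ and declare each $V(\kappa,\alpha)$ to be the set of remote points of $X$ that lie over the $X$-closure of the corresponding piece. Concretely, using Lemma \ref{celltypepibase} I fix a witness $\{W(\kappa,\alpha):\kappa\in\s,\alpha<\phi(\kappa)\}$ whose members have pairwise disjoint $X$-closures; I set $A(\kappa,\alpha)=\cl[X]{W(\kappa,\alpha)}$ and define
\[
V(\kappa,\alpha)=\R{X}\cap\cl[\beta X]{A(\kappa,\alpha)}.
\]

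The easy verifications come in two bundles. First, the sets $A(\kappa,\alpha)$ are pairwise disjoint regular closed subsets of the normal space $X$, so their $\beta X$-closures are pairwise disjoint and each $V(\kappa,\alpha)$ is clopen in $\R{X}$ by Proposition \ref{subspaces}. Second, I want each $V(\kappa,\alpha)$ to be homeomorphic to $\R{\baire{\kappa}}$ (which automatically has uniform cellularity $\kappa$ by Lemma \ref{lemmaunifcell} applied to the dense subset $\baire{\kappa}\subseteq\beta(\baire{\kappa})$ and then to $\R{\baire{\kappa}}$). For this, $W(\kappa,\alpha)$ is nowhere locally compact and completely metrizable, both properties passing to open subspaces of $X$, and has uniform cellularity $\kappa$ by the choice of witness; so Proposition \ref{coabsolute} makes $W(\kappa,\alpha)$ coabsolute with $\baire{\kappa}$ and Corollary \ref{coroabsolute} yields $\R{W(\kappa,\alpha)}\cong\R{\baire{\kappa}}$. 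Since $W(\kappa,\alpha)$ is open and dense in $A(\kappa,\alpha)$, Lemma \ref{opendense} identifies $\R{W(\kappa,\alpha)}$ with $\R{A(\kappa,\alpha)}$, which Proposition \ref{subspaces} in turn identifies with $V(\kappa,\alpha)$.

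The step I expect to be the main obstacle is showing that $\bigcup_{(\kappa,\alpha)}V(\kappa,\alpha)$ is dense in $\R{X}$, which is what will make the family a witness of a cellular type on $\R{X}$. The naive hope that every $p\in\R{X}$ already lies in some $\cl[\beta X]{A(\kappa,\alpha)}$ can fail when the index set is infinite, because $\beta X$ carries ultrafilter-style points over the collection of pieces that lie outside every single $\cl[\beta X]{A(\kappa,\alpha)}$. Instead, given $p\in\R{X}$ and a basic neighbourhood $\Ex(W)$ of $p$, I pass to the smaller neighbourhood $\Ex(W_0)$ with $W_0=W\cap\bigcup W(\kappa,\alpha)$: since $X-\bigcup W(\kappa,\alpha)$ is nowhere dense in $X$, remoteness of $p$ absorbs this part and forces $p\in\Ex(W_0)$. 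Picking any $(\kappa,\alpha)$ with $W\cap W(\kappa,\alpha)\neq\emptyset$, nowhere local compactness of $X$ makes the $X$-closure of $W\cap W(\kappa,\alpha)$ non-compact, so the closed-discrete construction from the proof of Proposition \ref{structuremetrizable}(a) produces a non-empty $G_\delta$ subset of $\beta X$ inside $\Ex(W\cap W(\kappa,\alpha))\cap X^\ast$, hence inside $\Ex(W)\cap\cl[\beta X]{A(\kappa,\alpha)}\cap X^\ast$. Proposition \ref{structuremetrizable}(b) then fills that $G_\delta$ with remote points of $X$, each of which sits in $V(\kappa,\alpha)\cap\Ex(W)$, establishing density and completing the proof.
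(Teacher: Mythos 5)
Your proof is correct and follows essentially the same route as the paper: the same sets $V(\kappa,\alpha)=\R{X}\cap\cl[\beta X]{\cl[X]{W(\kappa,\alpha)}}$, clopen-ness and disjointness via Proposition \ref{subspaces}, density from the density of remote points, and identification with $\R{\baire{\kappa}}$ via coabsoluteness with $\baire{\kappa}$. The only cosmetic difference is that the paper cites Theorem \ref{remotebaire} for the last step while you go through Lemma \ref{opendense} and Proposition \ref{coabsolute} directly, which amounts to the same argument (and conveniently avoids the $\kappa>\omega$ hypothesis in Theorem \ref{remotebaire}).
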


\begin{proof}
Let $\{W(\kappa,\alpha):\kappa\in\s,\alpha<\phi(\kappa)\}$ witness the cellular type of $X$. By Lemma \ref{celltypepibase}, we may assume that every two subsets of this family have disjoint closures. Let $D(\kappa,\alpha)=\cl{W(\kappa,\alpha)}$ for each $\kappa\in\s$ and $\alpha<\phi(\kappa)$, also define 
$$
V(\kappa,\alpha)=\R{X}\cap\cl[\beta X]{D(\kappa,\alpha)}.
$$
It easily follows from Proposition \ref{subspaces} that $V(\kappa,\alpha)=\R{D(\kappa,\alpha)}$ for each $\kappa\in\s$, $\alpha<\phi(\kappa)$ and $\B=\{V(\kappa,\alpha):\kappa\in\s,\alpha<\phi(\kappa)\}$ is a pairwise disjoint family of clopen subsets of $\R{X}$. Since $\R{X}$ is dense in $\beta X$ (Proposition \ref{structuremetrizable}), $V(\kappa,\alpha)\neq\emptyset$ for each $\kappa\in\s$, $\alpha<\phi(\kappa)$ and $\bigcup\B$ is dense.

Finally, fix $\kappa\in\s$ and $\alpha<\phi(\kappa)$, we now prove that $V(\kappa,\alpha)$ is homeomorphic to $\R{\baire{\kappa}}$, which will complete the proof. Notice that $D(\kappa,\alpha)$ is nowhere locally compact, completely metrizable and of uniform cellularity $\kappa$. The result now follows from Theorem \ref{remotebaire}.
\end{proof}

\begin{coro}\label{corocelltype}
If $X$ and $Y$ are nowhere locally compact, completely metrizable spaces with the same cellular type then $\R{X}$ and $\R{Y}$ have open dense homeomorphic subspaces.
\end{coro}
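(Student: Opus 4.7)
The plan is simply to stitch together the local homeomorphisms provided by Theorem \ref{thmcelltype}. Since $X$ and $Y$ share the same cellular type $(\s,\phi)$, apply Theorem \ref{thmcelltype} to each of them to obtain families $\B_X=\{V_X(\kappa,\alpha):\kappa\in\s,\alpha<\phi(\kappa)\}$ and $\B_Y=\{V_Y(\kappa,\alpha):\kappa\in\s,\alpha<\phi(\kappa)\}$ of pairwise disjoint clopen subsets of $\R{X}$ and $\R{Y}$ respectively, indexed by the same set, with the property that $\bigcup\B_X$ is dense in $\R{X}$, $\bigcup\B_Y$ is dense in $\R{Y}$, and for every pair $(\kappa,\alpha)$ both $V_X(\kappa,\alpha)$ and $V_Y(\kappa,\alpha)$ are homeomorphic to $\R{\baire{\kappa}}$.

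Next, for each pair $(\kappa,\alpha)$ fix a homeomorphism $h_{\kappa,\alpha}:V_X(\kappa,\alpha)\to V_Y(\kappa,\alpha)$ obtained by composing the two homeomorphisms into $\R{\baire\kappa}$. Let $U_X=\bigcup\B_X$ and $U_Y=\bigcup\B_Y$; these are open and dense in $\R{X}$ and $\R{Y}$ respectively. Define $h:U_X\to U_Y$ by declaring $h\restriction_{V_X(\kappa,\alpha)}=h_{\kappa,\alpha}$ on each piece. Because the pieces of $\B_X$ are pairwise disjoint clopen subsets whose union is $U_X$ (and similarly for $\B_Y$), the function $h$ is a well-defined bijection which is continuous and open on each clopen piece, hence a homeomorphism from $U_X$ onto $U_Y$. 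This gives the open dense homeomorphic subspaces required.

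There is essentially no obstacle: the entire content is packaged into Theorem \ref{thmcelltype}, which guarantees that the remote points decompose along the cellular type into pieces of a canonical form $\R{\baire{\kappa}}$ depending only on $\kappa$. The only mild subtlety is observing that cellular type is a pair $(\s,\phi)$ (not just the underlying set $\s$), so that the indexing sets match cardinal by cardinal and one really can biject the families of clopen pieces; this is guaranteed by the hypothesis that the two spaces have the \emph{same} cellular type.
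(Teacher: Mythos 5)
Your proof is correct and is essentially the argument the paper intends: the corollary is stated as an immediate consequence of Theorem \ref{thmcelltype}, and gluing the piecewise homeomorphisms across the matching clopen decompositions indexed by the common cellular type is exactly the intended derivation.
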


The following is true but maybe not worth proving in detail. We leave it as an exercise to the reader. 

\begin{propo}
$\empty$\vskip11pt
\begin{itemize}
\item Let $X$ and $Y$ be coabsolute Tychonoff spaces. Then $X$ has cellular type $(\s,\phi)$ if and only if $Y$ has cellular type $(\s,\phi)$. 
\item Every paracompact $p$-space has a cellular type.
\end{itemize}
\end{propo}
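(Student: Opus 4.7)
The plan is to prove both parts by establishing that cellular type is invariant under perfect irreducible continuous maps. Part 1 then follows immediately, because coabsolute spaces share a common absolute mapping perfectly and irreducibly to each of them; Part 2 will be obtained by adapting the Zorn argument used for crowded metrizable spaces.

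The key auxiliary observation to establish is the following: if $f:W\to Z$ is perfect and irreducible and $V\subset Z$ is open, then $f\restriction_{f\sp\leftarrow[V]}: f\sp\leftarrow[V]\to V$ is again perfect and irreducible. Perfectness is standard. For irreducibility, suppose $C$ is a proper closed subset of $f\sp\leftarrow[V]$ with $f[C]=V$, and pick a non-empty open $U\subset f\sp\leftarrow[V]\setminus C$; then $W\setminus U$ is a proper closed subset of $W$ whose image contains both $V$ (via $C$) and $Z\setminus V$ (via $W\setminus f\sp\leftarrow[V]$), contradicting irreducibility of $f$. Combined with Lemma \ref{lemmaunifcell}(i), this shows that $f\sp\leftarrow[V]$ has uniform cellularity $\kappa$ if and only if $V$ does.

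For Part 1 it therefore suffices to prove that a perfect irreducible $f:W\to Z$ transfers cellular type in both directions. Given a witness $\{V(\kappa,\alpha)\}$ in $Z$, the preimage family $\{f\sp\leftarrow[V(\kappa,\alpha)]\}$ is pairwise disjoint and has the correct uniform cellularities by the auxiliary observation; its union is dense because otherwise a non-empty open $U$ disjoint from all the preimages would force $f[W\setminus U]$, a closed set, to contain the dense $\bigcup V(\kappa,\alpha)$ and hence all of $Z$, contradicting irreducibility. Conversely, given a witness $\{W(\kappa,\alpha)\}$ in $W$, the family $\{f\sp\sharp[W(\kappa,\alpha)]\}$ is a pairwise disjoint family of open subsets of $Z$; its union is dense because for any non-empty open $U\subset Z$ and suitable $\kappa,\alpha$ the non-empty open set $A=f\sp\leftarrow[U]\cap W(\kappa,\alpha)$ satisfies $\emptyset\neq f\sp\sharp[A]\subset U\cap f\sp\sharp[W(\kappa,\alpha)]$ by irreducibility; and each $f\sp\sharp[W(\kappa,\alpha)]$ has uniform cellularity $\kappa$ because $f\sp\leftarrow[f\sp\sharp[W(\kappa,\alpha)]]\subset W(\kappa,\alpha)$ inherits uniform cellularity $\kappa$ as an open subspace and we invoke the auxiliary observation once more.

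For Part 2, one mimics the proof for crowded metrizable spaces: Kuratowski-Zorn produces a maximal pairwise disjoint family $\U$ of non-empty open subsets of uniform cellularity; its union is dense (otherwise an open subset of the complement with minimal cellularity is itself of uniform cellularity, contradicting maximality); and each cellularity class is padded to size at least $\kappa$ by replacement. The only non-formal point is attainment of cellularity in open subspaces, which for a paracompact $p$-space $X$ follows from its characterization as a perfect preimage of a metrizable space $M$: applying the Zorn argument of Proposition \ref{dimension} yields a closed subspace on which the perfect map becomes irreducible onto $M$, and attainment then transfers from $M$ via Lemma \ref{lemmaunifcell}(i). The main obstacle is orchestrating this attainment uniformly across all open subspaces of $X$, which is delicate enough to justify the authors' decision to leave Part 2 as an exercise.
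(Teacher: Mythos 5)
The paper gives no proof of this Proposition at all --- the authors explicitly leave it as an exercise --- so your argument has to stand on its own. Your proof of the first item does: the auxiliary observation that a closed irreducible surjection restricts to a closed irreducible surjection $f\restriction_{f\sp\leftarrow[V]}:f\sp\leftarrow[V]\to V$ over any open $V$ is correct, and combined with Lemma \ref{lemmaunifcell}(i) it lets you pull a witness back along $f\sp\leftarrow$ and push one forward along $f\sp\sharp$; both density arguments are sound, and since $k_X:EX\to X$, $k_Y:EY\to Y$ are perfect irreducible and $EX\cong EY$, the first item follows.

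The second item is where the genuine gap lies, and it is more serious than your closing hedge suggests. The Kuratowski--Zorn step and the density of $\bigcup\U$ work in any space; the entire content of the statement is the requirement $\phi(\kappa)\geq\kappa$, which forces you to produce, inside each open set of uniform cellularity $\kappa$, a cellular family of size exactly $\kappa$ --- that is, attainment of cellularity. Your proposed mechanism for attainment cannot work: passing to a closed $Y\subset X$ on which the perfect map onto the metrizable $M$ becomes irreducible only gives $c(Y)=c(M)$ via Lemma \ref{lemmaunifcell}(i), whereas $c(X)$ is in general far larger than $c(M)$, since the fibres are compact; in the extreme case $M$ is a point and $X$ is an arbitrary compact Hausdorff space, which is a paracompact $p$-space whose cellularity has nothing to do with any metrizable image. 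So the problem reduces to attainment of cellularity in open subsets of compact Hausdorff spaces. By the Erd\H{o}s--Tarski theorem this is automatic unless the cellularity in question is a weakly inaccessible cardinal (successor cellularity is attained trivially, singular cellularity by that theorem), so your argument does establish the second item whenever no open subset has weakly inaccessible cellularity; but in the remaining case attainment is a delicate set-theoretic issue, and a compact space each of whose non-empty open subsets had non-attained uniform cellularity $\kappa$ would have no cellular type at all, since any witness would be a cellular family of size $\geq\kappa$. You should either supply an attainment argument valid for all paracompact $p$-spaces, restrict the claim to the non-weakly-inaccessible case, or read ``paracompact $p$-space'' as the class in Proposition \ref{ponomarevpropo}(c) (spaces coabsolute with a metrizable space), for which the second item follows from your first item together with the paper's existence lemma for metrizable spaces, modulo a routine treatment of isolated points.
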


So the remaining question is if cellular type completely characterizes remote points of nowhere locally compact and completely metrizable spaces. We end this section showing that this is not the case by means of an example.

\begin{ex}\label{excelltype}
There exist two nowhere locally compact and completely metrizable spaces $X$ and $Y$ that have the same cellular type but such that $\R{X}$ is not homeomorphic to $\R{Y}$.
\end{ex}
\begin{exex}
For each $n<\omega$, let $X_n$ be homeomorphic to $\baire{\omega_n}$ such that $\{X_n:n<\omega\}$ are pairwise disjoint. Let us define $X=\oplus\{X_n:n<\omega\}$, clearly this is a nowhere locally compact and completely metrizable space. For each $n<\omega$ let $K_n=\cl[\beta X]{X_n}$ and let $P=\beta X-\bigcup\{K_n:n<\omega\}$.

We now define $T$ as the quotient space of $\beta X$ obtained by identifying $P$ to a point and let $\rho:\beta X\to T$ be this identification. Let $p\in T$ be such that $\{p\}=\rho[P]$ and define $Y=\{p\}\cup(\bigcup\{X_n:n<\omega\})$ as a subspace of $T$.

Notice $X$ and $Y$ have the same cellular type, simply because $X$ is open and dense in $Y$.

To see that $Y$ is metrizable, we may use the Bing-Nagata-Smirnov Theorem \cite[4.4.7]{eng}, since $X$ is already metrizable it is enough to notice that $Y$ is first-countable at $p$.  Since $Y$ is a $G_\delta$ in $T$, $Y$ is completely metrizable. 

We claim that $Y$ is $C\sp\ast$-embedded in $T$. Let $f:Y\to[0,1]$ be a continuous function. Let $g=f\restriction_{X}$, we now prove that $\beta g:\beta X\to [0,1]$ is constant restricted to $P$. Assume this is not the case and let $x,y\in P$ be such that $\beta g(x)<\beta g(y)$. Let $\epsilon=\frac{1}{3}(\beta g(y)-\beta g(x))$ and define $U=\beta g\sp\leftarrow[(-\infty,\beta g(x)+\epsilon)]$ and $V=\beta g\sp\leftarrow[(\beta g(y)-\epsilon,\infty)]$. Then there exist two closed countable discrete subsets $D_0$, $D_1$ of $X$ such that $D_0\subset U$ and $D_1\subset V$. Clearly, both $D_0$ and $D_1$ converge to $p$ in $Y$, this contradicts the definition of $U$ and $V$. Thus the function $F:T\to[0,1]$ given by $F(x)=\beta g(x)$ if $x\neq p$ and $\{F(p)\}=\beta g[P]$ is a continuous extension of $f$. Thus, $T=\beta Y$.

Since $P$ is a $G_\delta$ subset of $\beta X$, by Proposition \ref{structuremetrizable} we have that $\R{X}\cap P\neq\emptyset$. Also notice that
\begin{eqnarray*}
\R{X}=&\R{Y}\cup(\R{X}\cap P),&\textrm{ and}\\
\R{Y}=&\bigcup\{\R{X_n}:n<\omega\}.&
\end{eqnarray*}

To prove that $\R{X}$ is not homeomorphic to $\R{Y}$ it is enough to notice the following two facts which show different topological properties of points in $\R{X}\cap P$ to those in $\R{Y}$.

\begin{itemize}
\item[$(a)$] if $n<\omega$, $c(\R{X_n})=\omega_n$,
\item[$(b)$] if $q\in\R{X}\cap P$ then for every open set $U\subset \R{X}$ such that $q\in U$, $c(U)\geq\omega_\omega$.
\end{itemize}

Statement $(a)$ is clear. For statement $(b)$ let $q\in\R{X}\cap P$ and $U$ be an open subset of $\R{X}$ with $q\in U$. Let $A\subset\omega$ be an infinite set such that $U\cap K_n\neq\emptyset$ for all $n\in A$. For each $n\in A$ we may choose a pairwise disjoint family of open sets $\{V(\alpha,n):n<\omega_n\}$ of $\R{X_n}\cap U$. Then $\{V(\alpha,n):n<\omega,\alpha<\omega_n\}$ is a collection of $\omega_\omega$ pairwise open sets contained in $U$. Thus, $c(U)\geq\omega_\omega$.
\end{exex}

\section{Meager vs Comeager}\label{games}

In this section we consider separable metrizable spaces. We already know some spaces $X$ with $\R{X}$ homeomorphic to $\R{\baire{\omega}}$ (Theorem \ref{remotebaire}). We start by considering the problem for $\Q$.

\begin{lemma}\label{coabsoluteremainder}
Any two remainders of a nowhere locally compact Tychonoff space are coabsolute.
\end{lemma}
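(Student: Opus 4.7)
The plan is to reduce everything to showing that for each Hausdorff compactification $bX$ of $X$, the remainder $bX-X$ is coabsolute with $X\sp\ast=\beta X-X$; since coabsoluteness is an equivalence relation (by the uniqueness of the absolute), two arbitrary remainders $bX-X$ and $cX-X$ will then both be coabsolute with $X\sp\ast$, hence with one another. Fix $bX$ and let $\pi:\beta X\to bX$ be the canonical continuous surjection satisfying $\pi\restriction_X=\mathrm{id}_X$.

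The first step is to verify $\pi\sp\leftarrow[X]=X$, so that $\pi$ restricts to a continuous surjection $\rho=\pi\restriction_{X\sp\ast}:X\sp\ast\to bX-X$. If $\pi(p)=x\in X$ and $(x_\alpha)$ is a net in $X$ with $x_\alpha\to p$ in $\beta X$, then $x_\alpha=\pi(x_\alpha)\to x$ in $bX$; since $X$ embeds as a subspace in both compactifications this forces $x_\alpha\to x$ in $\beta X$ as well, and Hausdorffness gives $p=x\in X$.

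Next I would show $\rho$ is perfect and irreducible, and conclude via Lemma \ref{perfirriscoabsolute}. The fibres of $\rho$ are fibres of $\pi$ and hence compact; closedness comes from the identity $\rho[F]=\pi[\cl[\beta X]{F}]\cap(bX-X)$, valid for any $F$ closed in $X\sp\ast$ (using that $\pi\sp\leftarrow[bX-X]=X\sp\ast$), together with closedness of $\pi$. For irreducibility, note first that $\pi:\beta X\to bX$ is irreducible: any non-empty open $V\subseteq\beta X$ meets $X$ at some $x$, and $\pi\sp\leftarrow(x)=\{x\}\subseteq V$ gives $x\in\pi\sp\sharp[V]$. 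Here the nowhere local compactness hypothesis enters: it forces $bX-X$ to be dense in $bX$, for if some $x\in X$ had a $bX$-neighborhood missing $bX-X$ then, shrinking via regularity of $bX$, $x$ would have a $bX$-neighborhood with compact closure contained in $X$, contradicting the hypothesis. Hence for any non-empty relatively open $U=V\cap X\sp\ast\subseteq X\sp\ast$ (with $V\subseteq\beta X$ open), the non-empty open set $\pi\sp\sharp[V]\subseteq bX$ meets $bX-X$ at some $y$, and $\pi\sp\leftarrow(y)\subseteq V\cap X\sp\ast=U$, so $y\in\rho\sp\sharp[U]$ and $\rho$ is irreducible.

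The main obstacle is that neither $X\sp\ast$ nor $bX-X$ is closed in its ambient compactum, so one cannot simply restrict the standard perfect and irreducible map $\pi$ to these subspaces and invoke off-the-shelf restriction theorems. What makes the argument go through is the combination of Hausdorffness of $\beta X$, which prevents $\pi$ from mapping any remainder point back into $X$, with nowhere local compactness of $X$, which is precisely what ensures density of $bX-X$ in $bX$ and thereby lets irreducibility transfer from $\pi$ to $\rho$.
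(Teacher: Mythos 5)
Your proof is correct and follows essentially the same route as the paper: both pass through the canonical map $\pi:\beta X\to bX$ extending the identity, observe that it restricts to a perfect irreducible surjection of $X\sp\ast$ onto $bX-X$, and conclude via the uniqueness/transitivity of coabsoluteness. The only difference is that you prove by hand the facts the paper cites from Porter--Woods (namely $\pi\sp\leftarrow[bX-X]=X\sp\ast$ and the perfectness/irreducibility of the restriction), and you correctly pinpoint that nowhere local compactness enters exactly to make $bX-X$ dense in $bX$, which is what irreducibility of the restricted map needs.
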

\begin{proof}
Let $X$ be nowhere locally compact Tychonoff space. Consider any compactification $T$ of $X$ with remainder $Y$ and let $f:\beta X\to T$ be the continuous extension of the identity function. By \cite[1.8(i)]{porterwoods}, $X\sp\ast=f\sp\leftarrow[Y]$. Then $g=f\restriction_X:X\sp\ast\to Y$ is a perfect and irreducible continuous function. So $Y$ is coabsolute with $X\sp\ast$.
\end{proof}

\begin{thm}\label{coabsoluteQ}
Let $X$ be a metrizable space. Then $X$ is coabsolute with $\Q$ if and only if $X$ is nowhere locally compact and $\sigma$-compact.
\end{thm}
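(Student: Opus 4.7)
For $(\Rightarrow)$, assume $EX\cong E\Q$. The canonical projections $k_X\!:\!EX\to X$ and $k_\Q\!:\!E\Q\to\Q$ are perfect irreducible continuous surjections, and by Lemma \ref{lemmacechcomplete}$(b)$ each preserves nowhere local compactness in both directions; since $\Q$ is nowhere locally compact, so is $X$. Perfect continuous surjections also preserve $\sigma$-compactness in both directions (continuous images of compacts are compact; and perfect maps are proper, so preimages of compacts are compact), and so $X$ inherits $\sigma$-compactness from $\Q$.

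For $(\Leftarrow)$, assume $X$ is nowhere locally compact, $\sigma$-compact, and metrizable. By Proposition \ref{dimension} together with the two preservation arguments above, we may assume WLOG that $X$ is strongly $0$-dimensional; it is then separable, has a countable clopen base, and is crowded (an isolated point would be its own compact neighborhood). The plan is to apply Lemma \ref{coabsoluteremainder} to the nowhere locally compact space $\baire{\omega}$: the interval $[0,1]$ is already a compactification of $\baire{\omega}$ (realized as the irrationals) whose remainder is $\Q$, so it suffices to realize $X$ as the remainder of $\baire{\omega}$ in some other compactification. Lemma \ref{coabsoluteremainder} will then make both $X$ and $\Q$ coabsolute with $(\baire{\omega})^\ast$, and hence coabsolute with each other.

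To achieve this I would build a dense embedding $\phi\!:\!X\hookrightarrow\cs$ whose complement is homeomorphic to $\baire{\omega}$. Enlarge any countable clopen base of $X$ to a countable clopen base $\mathcal{B}$ such that the Boolean subalgebra $\langle\mathcal{B}\rangle$ of $\mathrm{Clop}(X)$ is atomless; this is possible because $X$ is crowded, so any atom of $\langle\mathcal{B}\rangle$ is a non-empty clopen set that properly splits in $\mathrm{Clop}(X)$, and we iterate countably many times. The Stone map $\phi(x)=\{B\in\langle\mathcal{B}\rangle:x\in B\}$ embeds $X$ homeomorphically onto a dense subspace of $\mathrm{Stone}(\langle\mathcal{B}\rangle)$, which is homeomorphic to $\cs$ because $\langle\mathcal{B}\rangle$ is countable and atomless. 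The complement $P=\cs\setminus\phi(X)$ is $G_\delta$ in $\cs$ (since $\phi(X)$ is $F_\sigma$, by $\sigma$-compactness), hence Polish, zero-dimensional, and non-empty (since $X$ is not compact).

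The main obstacle is verifying that $P$ is crowded and nowhere locally compact, for then the Alexandroff--Urysohn characterization invoked in the proof of Proposition \ref{coabsolute} identifies $P$ with $\baire{\omega}$. Both properties hinge on the nowhere local compactness of $X$. If some $u\in P$ were isolated in $P$ via a clopen $V\subseteq\cs$ with $V\cap P=\{u\}$, then $V\setminus\{u\}\subseteq\phi(X)$ would be an open subset of $\cs$ (hence locally compact), and $\phi^{-1}(V\setminus\{u\})$ would be a non-empty locally compact open subset of $X$, contradicting nowhere local compactness. The same line of reasoning shows $\phi(X)$ has empty interior in $\cs$, so $P$ is dense in $\cs$; hence for any clopen $V\subseteq\cs$ containing a point $u\in P$, the set $V\cap P$ is dense in $V$, its closure in $\cs$ equals $V$, and compactness of $V\cap P$ would force $V\cap P=V$, so $V\cap\phi(X)=\emptyset$, contradicting density of $\phi(X)$. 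Once $P\cong\baire{\omega}$, the space $\cs$ is a compactification of a copy of $\baire{\omega}$ with remainder $\phi(X)\cong X$, and Lemma \ref{coabsoluteremainder} closes the argument.
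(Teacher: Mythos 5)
Your proof is correct and follows essentially the same route as the paper's: reduce to the strongly $0$-dimensional case via Proposition \ref{dimension}, embed $X$ densely in $\cs$, identify the complement with $\baire{\omega}$ via the Alexandroff--Urysohn characterization, and conclude with Lemma \ref{coabsoluteremainder}. The only differences are cosmetic (you build the dense embedding through Stone duality of a countable atomless clopen algebra instead of taking the closure of $X$ in $\cs$, and you spell out the verification that the complement is crowded and nowhere locally compact, which the paper leaves implicit).
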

\begin{proof}
If $X$ is coabsolute with $\Q$, then $X$ is nowhere locally compact and $\sigma$-compact because these properties are preserved under perfect and irreducible continuous functions. Assume now that $X$ is nowhere locally compact and $\sigma$-compact. By Proposition \ref{dimension} we may assume that $X$ is strongly $0$-dimensional. Since $X$ is $\sigma$-compact, it is separable so it can be embedded in $\cs$. Moreover $X$ is crowded so the closure of $X$ in $\cs$ is also crowded. So we may assume that $X$ is dense in $\cs$, let $Y=\cs-X$. But then $Y$ is a separable, nowhere locally compact and completely metrizable $0$-dimensional space. Then $Y$ is homeomorphic to $\baire{\omega}$ (\cite{alexury}). Notice that $\cs$ is a compactification of $Y$ and there is a compactification of $\baire{\omega}$ with remainder $\Q$ ($\cs$ where $\Q$ is taken as the set of eventually constant functions). By Lemma \ref{coabsoluteremainder} we obtain that $X$ is coabsolute with $\Q$.
\end{proof}

Another proof of Theorem \ref{coabsoluteQ} can be given by considering the following result of van Mill and Woods.

\begin{thm}\cite[Theorem 3.1]{vmwoods}
Let $X$ be a $\sigma$-compact, nowhere locally compact metrizable space. Then there exists a perfect and irreducible continuous function $f:\Q\times\cs\to X$.
\end{thm}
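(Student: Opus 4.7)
The plan is a reduction to the strongly zero-dimensional case followed by an explicit tree construction. By Proposition~\ref{dimension} there is a perfect irreducible continuous surjection $p\colon X'\to X$ with $X'$ strongly zero-dimensional metrizable. Since $p$ is perfect, preimages of compact sets are compact, so $X'$ inherits $\sigma$-compactness from $X$; by Lemma~\ref{lemmacechcomplete}(b), $X'$ is also nowhere locally compact. Because perfect irreducible surjections compose, it suffices to construct a perfect irreducible continuous surjection $\Q\times\cs\to X'$, so I may henceforth assume that $X$ itself is strongly zero-dimensional.

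To construct the map, fix a compatible metric on $X$ and write $X=\bigcup_{n<\omega}K_n$ with the $K_n$ compact (hence nowhere dense) and $K_n\subseteq K_{n+1}$. Build inductively a sequence of countable clopen refinements $\mathcal{U}_n$ of $X$ of mesh less than $1/(n+1)$, with $\mathcal{U}_{n+1}$ refining $\mathcal{U}_n$, only finitely many members of $\mathcal{U}_n$ meeting $K_n$ (using compactness of $K_n$ and strong zero-dimensionality), and every $U\in\mathcal{U}_n$ split by $\mathcal{U}_{n+1}$ into infinitely many pieces (possible because $U\setminus K_{n+1}$ is nonempty open by nowhere density of $K_{n+1}$). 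Let $Z\subseteq\prod_n\mathcal{U}_n$ denote the space of consistent branches $(U_n)_n$ with $U_{n+1}\subseteq U_n$ and $\bigcap_n U_n\neq\emptyset$, and let $f\colon Z\to X$ be the evaluation map sending a branch to the unique point in its intersection. Continuity is clear from the mesh condition. For $x\in K_n$, the branches through $x$ are confined past level $n$ to the finitely many members of each $\mathcal{U}_m$ that meet $K_m$, so $f\sp{-1}(x)$ is compact and $f$ is perfect. Irreducibility is arranged by designing the refinements so that each basic clopen of $Z$ projects onto a set whose clopen interior in $X$ is not contained in the image of its complement in $Z$.

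The main obstacle is identifying $Z$ with $\Q\times\cs$. By construction $Z$ is separable metrizable, zero-dimensional, $\sigma$-compact (as the union over $n$ of the compact set of branches whose intersection lies in $K_n$), nowhere locally compact (by the infinite branching at every node), and crowded. The decisive further property is that every nonempty open subset of $Z$ contains a copy of $\cs$; this is what separates $\Q\times\cs$ from $\Q$ or $\baire\omega$ and must be secured by careful calibration of the infinite refinements outside the compact exhaustion. With this in place, $Z\cong\Q\times\cs$ follows from a topological characterization of $\Q\times\cs$ among zero-dimensional, $\sigma$-compact, nowhere locally compact, separable metric spaces. Orchestrating the refinements $\mathcal{U}_n$ to deliver this uncountability condition, perfectness, and irreducibility simultaneously is the main technical challenge of the proof; everything else is routine verification.
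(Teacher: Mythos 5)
The paper does not prove this statement; it is quoted from van Mill and Woods \cite{vmwoods}, so there is no in-paper argument to compare yours against. Judged on its own terms, your outline has the right overall shape --- reduce to the strongly zero-dimensional case, build a branch space $Z$ from a tree of countable clopen covers together with the evaluation map, and recognize $Z$ as $\Q\times\cs$ via a topological characterization --- and this is essentially the method used in the literature. The reduction step is correct, and the characterization you appeal to is a genuine theorem of van Mill ($\Q\times\cs$ is the unique zero-dimensional, separable metrizable, $\sigma$-compact, nowhere countable, nowhere locally compact space), though it is a substantial external result that must be cited explicitly rather than treated as routine.

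The gap is that the two properties you defer to ``careful calibration'' --- irreducibility of $f$ and nowhere countability of $Z$ --- are not side conditions but the entire content of the theorem, and they pull against each other. Take $X=\Q$: since $Z=\bigcup_{x\in X}f\sp{\leftarrow}(x)$ and $X$ is countable, every fiber must be uncountable (a Cantor set), or $Z$ would be countable; yet the obvious map with Cantor-set fibers, the projection $\Q\times\cs\to\Q$, is perfect but not irreducible, since the proper closed set $\Q\times\{z\}$ already maps onto $\Q$. So the covers $\U_n$ must be genuinely overlapping covers rather than partitions (with partitions each point determines a unique branch and $f$ would be a homeomorphism onto $X$), arranged so that every node of the tree owns a point of $X$ all of whose branches pass through that node (irreducibility), while simultaneously every nonempty stem carries a perfect set of branches (nowhere countability). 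Constructing covers that do both at once, and verifying it, is the proof; the sketch does not attempt it. Two smaller omissions: perfectness requires closedness of $f$, not only compact fibers --- this follows from properness once the covers are taken locally finite, but you check nothing beyond fibers over points of $K_n$ --- and surjectivity of the evaluation map needs the refinements to be tree-structured, i.e.\ each $U\in\U_n$ must be covered by its assigned children in $\U_{n+1}$, which your phrase ``split into infinitely many pieces'' suggests but should be made explicit.
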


\begin{coro}\label{remoteQ}
If $X$ is a $\sigma$-compact, nowhere locally compact, metrizable space then $\R{X}$ is homeomorphic to $\R{\Q}$.
\end{coro}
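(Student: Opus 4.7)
The plan is to derive the corollary directly from the material assembled in this section together with the earlier toolkit. By hypothesis $X$ is metrizable, $\sigma$-compact and nowhere locally compact, so Theorem \ref{coabsoluteQ} applies and tells us that $X$ is coabsolute with $\Q$. Corollary \ref{coroabsolute} then immediately yields a homeomorphism $\R{X}\cong\R{\Q}$. So the bulk of the corollary is really bookkeeping: the content has already been packed into Theorem \ref{coabsoluteQ}.

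An alternative route, and probably the one the phrasing just above the corollary is meant to suggest, is to bypass the coabsoluteness of $X$ with $\Q$ and instead route through $\Q\times\cs$. The quoted van Mill--Woods theorem gives a perfect irreducible continuous surjection $f:\Q\times\cs\to X$, and applied to $\Q$ itself (which is $\sigma$-compact, metrizable, nowhere locally compact) it also gives a perfect irreducible continuous surjection $g:\Q\times\cs\to\Q$. By Lemma \ref{perfirriscoabsolute}, both $X$ and $\Q$ are coabsolute with $\Q\times\cs$, hence with each other, and Corollary \ref{coroabsolute} closes the argument exactly as before. One can equally well invoke Proposition \ref{propogates} directly on $f$ and $g$ to get $\R{X}\cong\R{\Q\times\cs}\cong\R{\Q}$ without the detour through absolutes.

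The main obstacle, such as it is, was already surmounted in the proof of Theorem \ref{coabsoluteQ}: one needs to recognize $X$ as a dense subspace of $\cs$ whose complement is (up to the Alexandroff--Urysohn characterization) a copy of $\baire{\omega}$, and then use the standard compactification of $\baire{\omega}$ having $\Q$ as remainder together with Lemma \ref{coabsoluteremainder}. Once that is in hand, the present corollary is a one-line consequence, and I would simply write it as such.
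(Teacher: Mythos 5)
Your proposal is correct and matches the paper, which leaves the corollary as an immediate consequence of the preceding results: either Theorem \ref{coabsoluteQ} combined with Corollary \ref{coroabsolute}, or the van Mill--Woods theorem combined with Proposition \ref{propogates}, exactly the two routes you describe.
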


Notice we have the following situation: For two specific spaces $\baire{\omega}$ and $\Q$ we have found non-trivial classes of metrizable spaces that have the same set of remote points as these spaces. Now we want to know if these classes of spaces are the best possible. We were not able to solve this problem but we will prove Proposition \ref{thmgames} that is in the spirit of van Douwen's Theorem \ref{thmvandouwen}.

For a topological space $X$ we consider the \emph{Choquet game} with two players $I$ and $II$ in $\omega$ steps as follows. In step $n<\omega$, first $I$ chooses a non-empty open subset $U_n$ of $X$ with $U_n\subset V_{n-1}$ if $n\neq 0$ and then $II$ chooses a non-empty open subset $V_n$ of $U_n$. We say that player $II$ wins if $\bigcap V_n\neq\emptyset$, otherwise player $I$ wins. Call $X$ a \emph{Choquet space} if player $II$ has a winning strategy in the Choquet game for $X$. See \cite[8C]{kechris} for details.

\begin{lemma}\cite[8.17]{kechris}\label{choquetcomeager}
A separable metric space $(X,d)$ is a Choquet space if and only if $(X,d)$ is comeager in its completion.
\end{lemma}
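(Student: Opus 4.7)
The plan is to prove the biconditional in two steps, with the harder direction reducing to Oxtoby's Banach--Mazur characterization of comeager sets (a winning strategy for $II$ in the Banach--Mazur game on a space $Y$ with target $A \subset Y$ is equivalent to $A$ being comeager in $Y$).

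For the easy direction, suppose $X$ contains a dense $G_\delta$ subset $\bigcap_n G_n$ of its completion $\widehat{X}$, with each $G_n$ open dense. I would define a strategy for Player $II$ by induction: at stage $n$, given Player $I$'s open set $U_n \subset X$, lift $U_n$ to an open set $\widetilde{U}_n$ of $\widehat{X}$ with $\widetilde{U}_n \cap X = U_n$ (intersecting with $II$'s previous lift $\widetilde{V}_{n-1}$ to preserve nesting). Because $G_n$ is open dense in $\widehat{X}$ and $X$ is dense in $\widehat{X}$, I would pick $x_n \in X \cap \widetilde{U}_n \cap G_n$ and a radius $r_n < 1/n$ with $\overline{B_{\widehat{X}}(x_n, r_n)} \subset \widetilde{U}_n \cap G_n$, and let $II$ play $V_n = B_{\widehat{X}}(x_n, r_n) \cap X$. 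The closed balls $\overline{B_{\widehat{X}}(x_n, r_n)}$ are nested with diameters tending to $0$, so completeness of $\widehat{X}$ gives $\bigcap_n \overline{B_{\widehat{X}}(x_n, r_n)} = \{z\}$ for a unique $z \in \widehat{X}$. Since $z \in G_n$ for every $n$, the point $z$ lies in $X$, and also in every $V_n$, so $II$ wins.

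For the converse, given a winning strategy $\sigma$ for $II$ in the Choquet game on $X$, my goal would be to lift $\sigma$ to a winning strategy $\widehat{\sigma}$ for $II$ in the Banach--Mazur game on $\widehat{X}$ with target $X$. Given a partial Banach--Mazur play $\widehat{U}_0 \supset \widehat{V}_0 \supset \cdots \supset \widehat{U}_n$ of open subsets of $\widehat{X}$, I would simulate a Choquet play in $X$ by setting $U_k = \widehat{U}_k \cap X$ (nonempty by density) and $V_k = \sigma(U_0, V_0, \ldots, U_k)$. To define $\widehat{V}_n$, I would pick a center $x_n \in V_n$ and a radius $r_n < 1/n$ small enough that $B_{\widehat{X}}(x_n, r_n) \subset \widehat{U}_n$ and $B_{\widehat{X}}(x_n, r_n) \cap X \subset V_n$, then set $\widehat{V}_n = B_{\widehat{X}}(x_n, r_n)$. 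The latter condition guarantees $U_{k+1} = \widehat{U}_{k+1} \cap X \subset \widehat{V}_k \cap X \subset V_k$, so the simulated Choquet play is legal. Completeness of $\widehat{X}$ then forces $\bigcap_n \widehat{V}_n$ to be a single point $z$; $\sigma$ being winning provides $y \in \bigcap_n V_n \subset X$; and the chain $V_{k+1} \subset U_{k+1} \subset \widehat{U}_{k+1} \subset \widehat{V}_k$ places $y$ in every $\widehat{V}_k$, so $y = z \in X$, as needed.

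The main obstacle is this lifting in the converse direction: the balls $\widehat{V}_n$ must simultaneously (a) shrink in diameter to force a unique limit via completeness of $\widehat{X}$, and (b) be aligned with the auxiliary Choquet play so that the point produced by $\sigma$ inside $X$ coincides with that limit. The decisive trick is to center each $\widehat{V}_n$ at a point of $V_n$ and shrink the radius until $\widehat{V}_n \cap X \subset V_n$; this forces any point of $\bigcap_n V_n$ into every $\widehat{V}_k$, making the two limits agree and placing the Banach--Mazur limit inside $X$.
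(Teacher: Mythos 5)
The paper does not prove this lemma at all: it is imported verbatim from Kechris \cite[8.17]{kechris}, so there is no in-paper argument to compare against. Your proof is essentially correct and follows the standard strategy-transfer route. The forward direction (comeager $\Rightarrow$ Choquet) is complete: the lifted closed balls $\overline{B(x_n,r_n)}\subset\widetilde{U}_n\cap G_n$ are nested with vanishing diameter, their unique common point lies in $\bigcap_n G_n\subset X$ and in every $V_n$, and the legality checks ($V_n\subset U_n$ nonempty, the lifts respecting nesting) all go through. The converse correctly reduces to Oxtoby's Banach--Mazur characterization of comeagerness (Kechris 8.33), which is a legitimate but not self-contained step --- you are trading one black box for another, though an arguably more elementary one. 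One cosmetic flaw: the assertion that ``completeness of $\widehat{X}$ forces $\bigcap_n\widehat{V}_n$ to be a single point'' is not literally valid, since nested \emph{open} balls with shrinking radii in a complete space can have empty intersection. This does no harm --- in the Banach--Mazur game Player $II$ only needs $\bigcap_n\widehat{V}_n\subset X$, which holds vacuously if the intersection is empty, and in any case your chain $V_{k+1}\subset U_{k+1}\subset\widehat{U}_{k+1}\subset\widehat{V}_k$ exhibits the point $y$ supplied by $\sigma$ as a member of every $\widehat{V}_k$, after which the shrinking diameters give $\bigcap_n\widehat{V}_n=\{y\}\subset X$ --- but you should phrase it that way rather than attributing nonemptiness to completeness.
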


Thus, every separable completely metrizable space is a Choquet space. Also notice that a $\sigma$-compact nowhere locally compact metrizable space is meager. So in some sense $\baire{\omega}$ and $\Q$ are dual.

\begin{lemma}\label{gamecoabsolute}
Let $f:X\to Y$ be an irreducible continuous function between crowded regular spaces. Then
\begin{itemize}
\item[$(a)$] $X$ is meager if and only if $Y$ is meager, and
\item[$(b)$] $X$ is a Choquet space if and only if $Y$ is a Choquet space.
\end{itemize}
\end{lemma}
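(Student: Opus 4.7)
The plan is to prove (a) and (b) separately. For (a), I would show that nowhere density is preserved in both directions by $f$ (via image in one direction and preimage in the other), so the meagerness equivalence follows by countable unions. For (b), I would build II's winning strategy on one side from a given winning strategy on the other by simulation, using the adjoint pair between open sets of $X$ and open sets of $Y$ given by $V\mapsto f^{-1}[V]$ and $U\mapsto f^\sharp[U]$.

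For (a), if $N\subseteq X$ is nowhere dense, then $f[\overline{N}]$ is closed in $Y$ by closedness of $f$; the existence of a non-empty open $V\subseteq f[\overline{N}]$ would produce the non-empty open set $U:=f^{-1}[V]\setminus\overline{N}$ in $X$, and then $f^\sharp[U]$ would be non-empty open (by irreducibility), both contained in $V\subseteq f[\overline{N}]$ and disjoint from $f[\overline{N}]$ (since $\overline{N}\subseteq X\setminus U$ gives $f[\overline{N}]\subseteq f[X\setminus U]=Y\setminus f^\sharp[U]$), a contradiction. Conversely, if $M\subseteq Y$ is nowhere dense, then $f^{-1}[\overline{M}]$ is closed by continuity; a non-empty open $U$ contained in it would make $f^\sharp[U]\subseteq\overline{M}$ a non-empty open subset, contradicting nowhere density of $M$. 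Meagerness preservation in both directions follows immediately.

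For the direction $Y$ Choquet $\Rightarrow$ $X$ Choquet in (b), given II's winning strategy $\sigma$ on $Y$, I would define II's strategy $\tau$ on $X$ by simulation: when I plays $U_n$ in $X$, let $W_n=\sigma(f^\sharp[U_0],\ldots,f^\sharp[U_n])$ and have II play $f^{-1}[W_n]$. The inclusions $f^{-1}[W_n]\subseteq f^{-1}[f^\sharp[U_n]]\subseteq U_n$ and $U_{n+1}\subseteq f^{-1}[W_n]\Rightarrow f^\sharp[U_{n+1}]\subseteq W_n$ validate the simulation, while $\bigcap_n f^{-1}[W_n]=f^{-1}[\bigcap_n W_n]$ is non-empty by the winning of $\sigma$ combined with surjectivity of $f$.

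The harder direction $X$ Choquet $\Rightarrow$ $Y$ Choquet uses the dual simulation: when I plays $V_n$ in $Y$, simulate I playing $I_n:=f^{-1}[V_n]$ in $X$, let $U_n=\tau(I_0,\ldots,I_n)$, and have II play $W_n:=f^\sharp[U_n]$. Validity is symmetric. The main obstacle is showing $\bigcap_n W_n\neq\emptyset$, which demands that some full fiber $f^{-1}(y)$ be contained in $\bigcap_n U_n$, not merely that the intersection itself be non-empty. I would handle this by refining $\tau$ into a winning strategy whose moves are $f$-saturated (unions of fibers), exploiting the fact that the simulated I-plays $I_n$ are already saturated and that a saturated $I_{n+1}\subseteq U_n$ automatically lies inside the saturated core $U_n^*=f^{-1}[f^\sharp[U_n]]$. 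This lets one identify the Choquet game on the saturated opens of $X$ with the Choquet game on $Y$ via the $f^{-1}\leftrightarrow f^\sharp$ correspondence, and to upgrade $\tau$ to a winning strategy $\tau^*$ that plays saturated opens; then $\bigcap_n\tau^*(I_0,\ldots,I_n)$ is a union of fibers, non-empty by the winning of $\tau^*$, so it contains an entire fiber whose image provides the required point of $\bigcap_n W_n$.
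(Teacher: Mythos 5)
Your proof is correct and, in both parts, follows essentially the same route as the paper: part (a) via preservation of closed nowhere dense sets under images and preimages of $f$, and part (b) via simulation of player $II$'s strategy through the adjoint pair $U\mapsto f^{-1}[U]$, $V\mapsto f^{\sharp}[V]$. The one place where you diverge is the direction ``$X$ Choquet $\Rightarrow$ $Y$ Choquet'': the obstacle you identify there is not actually present, so the saturation machinery you introduce to overcome it is superfluous. In your notation the two runs interleave, $V_{n+1}\subseteq W_n\subseteq V_n$ in $Y$ and $U_{n+1}\subseteq I_{n+1}\subseteq U_n$ in $X$ (the middle inclusion because $I_{n+1}=f^{-1}[V_{n+1}]\subseteq f^{-1}[W_n]=f^{-1}[f^{\sharp}[U_n]]\subseteq U_n$). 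Hence $\bigcap_n W_n=\bigcap_n V_n$ and $\bigcap_n U_n=\bigcap_n I_n=f^{-1}[\bigcap_n V_n]$, so any point $p$ of the non-empty set $\bigcap_n U_n$ already satisfies $f(p)\in\bigcap_n V_n=\bigcap_n W_n$; no containment of a full fiber is required. Equivalently, $\bigcap_n I_n$ is automatically $f$-saturated, so your upgrade of $\tau$ to $\tau^{*}$, though it can be carried out, buys nothing. This direct observation is exactly how the paper closes that direction, and it makes the two directions of (b) symmetric in difficulty rather than one being ``harder.''
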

\begin{proof}
Let us start with $(a)$. A meager space can be written as the union of $\omega$ closed nowhere dense subsets. Since $f$ is closed irreducible, the image of a closed nowhere dense subset of $X$ is also closed nowhere dense by \cite[Lemma 2.1]{gates}. The other implication is easier.

Now we prove $(b)$. First assume that $X$ is a Choquet space. We will now use $II$'s strategy on $X$ to produce one on $Y$. Every time player $I$ plays an open set $U_n\subset Y$, let $W_n=f\sp\leftarrow[U_n]$. Using the strategy of Player $II$ on $X$, we obtain an open subset $V_n\subset W_n$ of $X$. Since $f$ is irreducible, Player $II$ plays the non-empty open subset $f\sp\sharp[V_n]$ of $Y$. Since $II$ wins in $X$, there exists $p\in\bigcap\{V_n:n<\omega\}=\bigcap\{W_n:n<\omega\}$ so $f(p)\in\bigcap\{U_n:n<\omega\}=\bigcap\{f\sp\sharp[V_n]:n<\omega\}$. Thus, $II$ also wins in $Y$.

Now assume that $Y$ is a Choquet space, again we transfer $II$'s strategy to $X$. If $I$ plays an open set $U_n\subset X$, consider $W_n=f\sp\sharp[U_n]$ which is open and non-empty in $Y$. Using $II$'s strategy, we obtain an open subset $V_n\subset W_n$. Then Player $II$ plays $f\sp\leftarrow[V_n]\subset U_n$ on $X$. We know that $II$ wins on $Y$ so there exists $p\in\bigcap\{V_n:n<\omega\}$, clearly $f\sp\leftarrow(p)\subset\bigcap\{f\sp\leftarrow[V_n]:n<\omega\}$ so $II$ wins on $X$ as well. This completes the proof of $(b)$.
\end{proof}

\begin{lemma}\label{gameGdense}
Let $X$ be a space and $Y\subset X$ be $G_\delta$-dense in $X$. Then
\begin{itemize}
\item[$(a)$] $X$ is meager if and only if $Y$ is meager, and
\item[$(b)$] $X$ is a Choquet space if and only if $Y$ is a Choquet space.
\end{itemize}
\end{lemma}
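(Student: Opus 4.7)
The key property of $G_\delta$-density I will exploit is its dual form: every $F_\sigma$ subset of $X$ that contains $Y$ is all of $X$. In particular $Y$ is dense in $X$, so every non-empty open subset of $Y$ is the trace of a non-empty open subset of $X$.

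For (a), the forward direction needs only density. If $X = \bigcup_n F_n$ with each $F_n$ closed nowhere dense in $X$, then $Y = \bigcup_n (F_n \cap Y)$, each $F_n \cap Y$ is closed in $Y$, and any non-empty open $V \cap Y$ contained in $F_n$ would, by density, force $V \subset \cl{V \cap Y} \subset F_n$, contradicting nowhere density. For the reverse, given a meager decomposition $Y = \bigcup_n E_n$ with $E_n$ closed nowhere dense in $Y$, I set $F_n = \cl{E_n}$. Since $E_n$ is closed in $Y$ we have $F_n \cap Y = E_n$, and the same density argument shows each $F_n$ is nowhere dense in $X$. Then $\bigcup_n F_n$ is an $F_\sigma$ subset of $X$ containing $Y$, so by $G_\delta$-density it equals $X$, making $X$ meager.

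For (b), I transfer a winning strategy for player $II$ between the two spaces. To pass a strategy from $X$ to $Y$, I inductively lift player $I$'s moves $U_n \subset Y$ to open sets $U_n' \subset X$ with $U_n' \cap Y = U_n$ and $U_n' \subset V_{n-1}'$ (fix any open lift $\tilde U_n$ of $U_n$ and intersect with $V_{n-1}'$); feed these to the winning $X$-strategy to get $V_n' \subset U_n'$, and let player $II$ answer $V_n = V_n' \cap Y$, which is non-empty by density. Since the $X$-strategy wins, $\bigcap_n V_n'$ is a non-empty $G_\delta$ of $X$, which meets $Y$ by $G_\delta$-density, so $\bigcap_n V_n \neq \emptyset$. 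Conversely, starting from a winning strategy on $Y$, I set $U_n = U_n' \cap Y$ (non-empty by density), apply the $Y$-strategy to get $V_n$, and lift back via $V_n' = W_n \cap U_n'$, where $W_n$ is any open set in $X$ with $W_n \cap Y = V_n$; then $\bigcap_n V_n' \supset \bigcap_n V_n \neq \emptyset$. This direction uses only density.

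The only essential use of $G_\delta$-density (as opposed to plain density) occurs in the reverse of (a) and the forward of (b), where in each case a non-empty $G_\delta$ of $X$ -- namely $X \setminus \bigcup_n F_n$ in (a), or $\bigcap_n V_n'$ in (b) -- must be shown to meet $Y$. The remaining bookkeeping (checking that the lifts $U_n'$ are non-empty and nested correctly, and that $II$'s move on one space depends only on the visible history on that space) is routine.
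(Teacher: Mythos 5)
Your proof is correct and follows essentially the same route as the paper: for (a) you take closures of the nowhere dense pieces of $Y$ and use $G_\delta$-density to see that the resulting $F_\sigma$ set exhausts $X$, and for (b) you transfer player $II$'s winning strategy via open lifts, invoking $G_\delta$-density to show the resulting non-empty $G_\delta$ meets $Y$. The only difference is that you explicitly intersect the lifts with the previous moves so that the transferred plays are legal (nested), a bookkeeping point the paper's proof leaves implicit.
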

\begin{proof}
Start with $(a)$. If $X$ is meager, then $Y$ is also meager because it is dense in $X$. Assume that $Y$ is meager, so $Y=\bigcup\{Y_n:n<\omega\}$ where $Y_n$ is nowhere dense for each $n<\omega$. Let $X_n=\cl{Y_n}$, this is a closed and nowhere dense subset of $X$ for each $n<\omega$. Since $X-\bigcup\{X_n:n<\omega\}$ is a subset of type $G_\delta$ of $X$ that does not intersect $Y$, it must be empty. Thus, $X$ is meager.

Now we prove $(b)$. First assume that $X$ is a Choquet space. As in the proof of Lemma \ref{gamecoabsolute}, we transfer strategies. If $I$ chooses an open subset $U_n$ of $Y$, let $U_n\sp\prime$ be an open subset of $X$ such that $U_n\sp\prime\cap Y=U_n$. Player $II$'s strategy gives an open subset $V_n\subset U_n\sp\prime$ of $X$ so Player $II$ plays $V_n\cap Y\neq\emptyset$. By $II$'s strategy in $X$, we know that $G=\bigcap\{V_n:n<\omega\}$ is a non-empty subset of $X$ of type $G_\delta$. Thus, $G\cap Y\neq\emptyset$, which implies that the described strategy for $II$ is a winning strategy in $Y$.

Now assume that $Y$ is a Choquet space. We again transfer $II$'s strategy in $Y$ to $X$. Every time $I$ chooses an open subset $U_n$ of $X$, consider the open subset $W_n=U_n\cap Y$ of $Y$. The strategy in $Y$ for player $II$ gives an open subset $V_n\subset W_n$. Choose an open subset $V_n\sp\prime$ of $X$ such that $V_n\sp\prime\cap Y=V_n$. So $II$ plays $V_n\sp\prime$. Since $\bigcap\{V_n:n<\omega\}$ is non-empty, we obtain that $\bigcap\{V_n\sp\prime:n<\omega\}$ is non-empty as well. This shows that $II$ has a winning strategy so $X$ is a Choquet space. We have finished the proof.
\end{proof}

\begin{propo}\label{thmgames}
Let $X$ and $Y$ be two separable, nowhere locally compact metrizable spaces such that $\R{X}$ is homeomorphic to $\R{Y}$. Then
\begin{itemize}
\item[$(i)$] $X$ is a Choquet space if and only if $Y$ is a Choquet space, and
\item[$(ii)$] $X$ is meager if and only if $Y$ is meager.
\end{itemize}
\end{propo}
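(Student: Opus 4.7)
The plan is to prove, for every separable, nowhere locally compact, metrizable space $X$, the two biconditionals
\[
X \text{ is Choquet} \iff \R{X} \text{ is meager}, \qquad X \text{ is meager} \iff \R{X} \text{ is Choquet}.
\]
Once these are established, the proposition is immediate from the assumption $\R{X}\cong\R{Y}$. I route from $X$ to $\R{X}$ through two intermediate spaces: a remainder $\cs-X$ in a well-chosen compactification, and $X^{\ast}=\beta X-X$. After reducing via Proposition \ref{dimension} together with the transfer results (Corollary \ref{coroabsolute}, Lemmas \ref{gamecoabsolute}, \ref{lemmacechcomplete}, \ref{lemmaunifcell}) to the case in which $X$ is strongly $0$-dimensional, I embed $X$ densely in $\cs$ by taking the closure of any zero-dimensional metric embedding (the closure is a crowded compact zero-dimensional metrizable space, hence homeomorphic to $\cs$). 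Nowhere local compactness of $X$ forbids $X$ from containing any non-empty open subset of $\cs$, so $\cs-X$ is dense in $\cs$; the same observation shows that $\cs-X$ and $X^{\ast}$ are crowded.

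\emph{Stage 1 (relating $X$ and $\cs-X$).} The completion of $X$ in the induced metric is $\cs$, so Lemma \ref{choquetcomeager} gives: $X$ is Choquet iff $X$ is comeager in $\cs$ iff $\cs-X$ is meager in $\cs$. Density of $\cs-X$ in $\cs$ forces ``meager in $\cs$'' and ``meager in itself'' to coincide for $\cs-X$: if $N\subset\cs-X$ is nowhere dense in $\cs-X$ then $\cl[\cs]{N}$ is nowhere dense in $\cs$ (any non-empty open $V\subset\cs$ lying in $\cl[\cs]{N}$ would meet $\cs-X$ in a non-empty relatively open subset of $\cl[\cs-X]{N}$), and conversely a subset of $\cs-X$ nowhere dense in $\cs$ is automatically nowhere dense in $\cs-X$. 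Hence $X$ is Choquet iff $\cs-X$ is meager. The dual reasoning, applied to $X$ dense in $\cs$ and to $\cs-X$ whose induced-metric completion is also $\cs$, yields $X$ is meager iff $\cs-X$ is Choquet.

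\emph{Stage 2 (from $\cs-X$ to $X^{\ast}$ to $\R{X}$).} Since $\cs$ is a compactification of $X$ with remainder $\cs-X$, Lemma \ref{coabsoluteremainder} provides a perfect and irreducible continuous surjection $X^{\ast}\to\cs-X$; Lemma \ref{gamecoabsolute} applied to this map between crowded regular spaces then transfers both meagerness and Choquet-ness between $X^{\ast}$ and $\cs-X$. Finally, I claim $\R{X}$ is $G_\delta$-dense in $X^{\ast}$: given a non-empty $G_\delta$-subset $H=G\cap X^{\ast}$ of $X^{\ast}$ with $G$ a $G_\delta$-subset of $\beta X$, pick $p\in H$ and exploit the realcompactness of the Lindel\"of Tychonoff space $X$ to obtain a $G_\delta$-subset $G_{p}$ of $\beta X$ with $p\in G_{p}\subset X^{\ast}$; then $G\cap G_{p}$ is a non-empty $G_\delta$-subset of $\beta X$ contained in $X^{\ast}$, and Proposition \ref{structuremetrizable}(b) places a remote point into $G\cap G_{p}\subset H$. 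Lemma \ref{gameGdense} now gives $\R{X}$ meager iff $X^{\ast}$ meager, and similarly for Choquet.

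Concatenating the three stages produces the required biconditionals and hence the proposition. The main obstacle is the $G_\delta$-density claim in Stage 2, where both the realcompactness of $X$ and the strong ``$|G\cap\R{X}|=2^{\cont}$'' statement in Proposition \ref{structuremetrizable}(b) are essential; everything else amounts to a careful linking of already-established transfer lemmas.
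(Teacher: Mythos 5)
Your proposal is correct and follows essentially the same route as the paper: pass from $X$ to the remainder of a metrizable compactification via Lemma \ref{choquetcomeager}, then to $X\sp\ast$ via an irreducible map and Lemma \ref{gamecoabsolute}, then to $\R{X}$ via $G_\delta$-density and Lemma \ref{gameGdense}. The only differences are inessential --- your reduction to the strongly $0$-dimensional case and the specific choice of $\cs$ as compactification are not needed --- though your explicit verification of the $G_\delta$-density of $\R{X}$ in $X\sp\ast$ using realcompactness is a welcome elaboration of a step the paper only cites.
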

\begin{proof}
Let $K$ be a metrizable compactification of $X$ and $T$ be a metrizable compactification of $Y$, and assume that $X\subset K$ and $Y\subset T$. Let $f:\beta X\to K$ and $g:\beta Y\to T$ continuously extend the identity function. Then $f\restriction_{X\sp\ast}:X\sp\ast\to K-X$ and $g\restriction_{Y\sp\ast}:Y\sp\ast\to T-Y$ are easily seen to be irreducible continuous functions. Notice that if we give $T$ and $K$ some metric, these spaces are the completion of $X$ and $Y$, respectively, with respect to appropriate restrictions of these metrics. Thus, we can use Lemma \ref{choquetcomeager}.

Let us prove $(i)$. Assume that $X$ is a Choquet space. Then, by Lemma \ref{choquetcomeager} $X$ is comeager in $K$. Thus, $K-X$ is meager. By Lemma \ref{gamecoabsolute} applied to $f\restriction_{X\sp\ast}$, $X\sp\ast$ is meager. Recall $\R{X}$ is $G_\delta$-dense in $X\sp\ast$ ($(b)$ in Proposition \ref{structuremetrizable}) so by Lemma \ref{gameGdense}, $\R{X}$ is meager. Thus, $\R{Y}$ is meager. Therefore, we can again use Lemmas \ref{gameGdense} and \ref{gamecoabsolute} to obtain that $Y\sp\ast$ and $T-Y$ are meager as well. Again, Lemma \ref{choquetcomeager} proves that $Y$ is a Choquet space. Using a similar argument it is easy to prove $(ii)$.
\end{proof}

One good hope to extend the results given above is to consider the \emph{strong Choquet game}. This game is similar to the Choquet game, with the exception that $I$ also chooses a point $x_n\in U_n$ and $II$ is required to play so that $x_n\in V_n$. A \emph{strong Choquet space} is one in which $II$ has a winning strategy. See the details in \cite[8D]{kechris}. The important point of this game is that a separable metrizable space is strong Choquet if and only if it is completely metrizable (\cite[8.17]{kechris}). However it is not easy to handle the points $x_n$ in the game to produce analogous results to Lemmas \ref{gamecoabsolute} and \ref{gameGdense}. However, it is possible to prove the following by transfering the strong Choquet game.

\begin{propo}
$\R{\Q}$ is a strong Choquet space.
\end{propo}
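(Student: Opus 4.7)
The plan is to exhibit a winning strategy for Player $II$ in the strong Choquet game on $\R{\Q}$ by lifting the play to the ambient compact space $\beta\Q$. Two ingredients combine: compactness of $\beta\Q$ trivially provides a winning strategy for $II$ in the strong Choquet game \emph{there}, while Proposition~\ref{structuremetrizable}$(b)$ guarantees that every non-empty $G_\delta$ subset of $\beta\Q$ lying in $\Q\sp\ast$ already meets $\R{\Q}$. To force the final intersection into $\R{\Q}$, Player $II$ will ``dodge'' the rationals one at a time along an enumeration.

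Fix an enumeration $\Q=\{q_n:n<\omega\}$. Suppose inductively that $II$ has played an open set $V_{n-1}\subseteq\beta\Q$ at the previous stage, and $I$ now plays $(p_n,U_n)$ with $p_n\in U_n\subseteq V_{n-1}\cap\R{\Q}$. Since $\R{\Q}$ carries the subspace topology from $\beta\Q$, choose an open $W_n\subseteq\beta\Q$ with $W_n\cap\R{\Q}=U_n$; after replacing $W_n$ by $W_n\cap V_{n-1}$, we may assume $W_n\subseteq V_{n-1}$ (the intersection with $\R{\Q}$ is unchanged, because $U_n\subseteq V_{n-1}$). Now $p_n\in W_n$ and $p_n\neq q_n$ because $p_n\in\R{\Q}\subseteq\beta\Q-\Q$, so by normality of the compact Hausdorff space $\beta\Q$, applied to the point $p_n$ and the closed set $\{q_n\}\cup(\beta\Q-W_n)$, there is an open set $V_n$ with
\[
p_n\in V_n\subseteq\cl[\beta\Q]{V_n}\subseteq W_n\quad\text{and}\quad q_n\notin\cl[\beta\Q]{V_n}.
\]
Player $II$ then plays $V_n\cap\R{\Q}$, a legal response inside $U_n$ containing $p_n$.

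To see that the strategy wins, note that $\cl[\beta\Q]{V_{n+1}}\subseteq W_{n+1}\subseteq V_n$, so $\{\cl[\beta\Q]{V_n}:n<\omega\}$ is a decreasing sequence of non-empty closed subsets of the compact space $\beta\Q$ and therefore has non-empty intersection, which moreover coincides with $\bigcap_{n<\omega}V_n$. By construction $q_n\notin\cl[\beta\Q]{V_n}$ for every $n$, so $\bigcap V_n$ avoids $\Q$, and is thus a non-empty $G_\delta$ subset of $\beta\Q$ contained in $\Q\sp\ast$. Proposition~\ref{structuremetrizable}$(b)$ then yields $\bigcap V_n\cap\R{\Q}\neq\emptyset$, which is exactly $\bigcap_{n<\omega}(V_n\cap\R{\Q})$, the intersection of $II$'s moves.

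The one delicate point I expect is the simultaneous bookkeeping imposed on $W_n$: matching $U_n$ on $\R{\Q}$ while sitting inside the previous $V_{n-1}$, since it is this nesting in $\beta\Q$ that lets compactness do the work. Once that is arranged, the two competing constraints on $V_n$—shrinking inside $W_n$ while separating from $q_n$—are met directly by normality, and no further obstacle arises.
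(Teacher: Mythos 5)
Your argument is correct: lifting the play to $\beta\Q$, dodging an enumeration of $\Q$ so that compactness lands the intersection in a non-empty $G_\delta$ subset of $\Q\sp\ast$, and then invoking Proposition \ref{structuremetrizable}$(b)$ is exactly the ``transfer of the strong Choquet game'' the paper alludes to but does not write out. The bookkeeping you flag (keeping $W_n\subseteq V_{n-1}$ while $W_n\cap\R{\Q}=U_n$) is handled correctly, so there is nothing to add.
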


So the following remains unanswered.

\begin{ques}\label{quesrationals}
Let $X$ be a metrizable space such that $\R{X}$ is homeomorphic to $\R{\Q}$. Is $X$ $\sigma$-compact?
\end{ques}

Now we make some comments about the use of Proposition \ref{thmgames}. Let $X$ and $Y$ be separable, nowhere locally compact and metrizable. If, for example, $X$ has some open subset that is meager and $Y$ is comeager then we can say that $\R{X}$ and $\R{Y}$ are not homeomorphic using Propositions \ref{subspaces} and \ref{thmgames}. However we are not able to distinguish between, for example, $\R{\Q}$ and $\R{\baire{\omega}\times\Q}$ or between $\R{\baire\omega}$ and $\R{{\sp{2}{P}}\cup{\sp{2}{\Q}}}$ where $P=\mathbb{R}-\Q$.

We also know nothing about non-definable sets (of $\cs$). Thus, we finish the paper with the following question.

\begin{ques}
Do there exist two Bernstein sets $X$ and $Y$ such that $\R{X}$ is not homeomorphic to $\R{Y}$?
\end{ques}

\begin{acknowledgement}
The authors would like to thank the referee for finding various inaccuracies in results and for the useful comments made.
\end{acknowledgement}

\end{document}